\newtheorem{theorem}{Theorem}[section]
\newtheorem{remark}[theorem]{Remark}
\newtheorem{lemma}[theorem]{Lemma}
\newtheorem{corollary}[theorem]{Corollary}
\newtheorem{problem}[theorem]{Problem}
\newcommand{\lrangle}[2]{\left\langle{#1},{#2}\right\rangle} 
\newcommand{\vect}[1]{\boldsymbol{\mathbf{#1}}} 
\newcommand{\vertiii}[1]{{\vert\kern-0.25ex\vert\kern-0.25ex\vert #1 
    \vert\kern-0.25ex\vert\kern-0.25ex\vert}}
\newcommand{\dom}{\Omega}
\newcommand{\dett}{I_{t}}
\newcommand{\dive}{\operatorname{div}}
\newcommand{\uup}{\vect{u}^{\prime}}
\newcommand{\uu}{\vect{u}}
\newcommand{\uut}{\vect{u}_{t}}
\newcommand{\realu}{\vect{u}_{r}}
\newcommand{\imagu}{\vect{u}_{i}}
\newcommand{\vv}{\vect{v}}
\newcommand{\ww}{\vect{w}}
\newcommand{\realp}{p_{r}}
\newcommand{\imagp}{p_{i}}
\newcommand{\pd}{p_{D}}
\newcommand{\pn}{p_{N}}
\newcommand{\ff}{\vect{f}}
\newcommand{\bgg}{\vect{g}}
\newcommand{\ggb}{{G}}
\newcommand{\cv}{\overline{\vv}}
\newcommand{\cw}{\overline{\ww}}
\newcommand{\Mt}{M_{t}}
\newcommand{\Vgamma}{V_{\Gamma}}
\newcommand{\oQ}{{Q}_{0}}
\newcommand{\un}{\vect{u}_{N}}
\newcommand{\ud}{\vect{u}_{D}}
\newcommand{\bphi}{\vect{\varphi}}
\newcommand{\bpsi}{\vect{\psi}}
\newcommand{\cbphi}{\overline{\vect{\varphi}}}
\newcommand{\cbpsi}{\overline{\vect{\psi}}}
\newcommand{\rt}{r^{t}}
\newcommand{\yt}{\vect{y}^{t}}
\newcommand{\rti}{r_{i}^{t}}
\newcommand{\yti}{\vect{y}_{i}^{t}}
\newcommand{\cyt}{\overline{\vect{y}}^{t}}
\newcommand{\ut}{\uu^{t}}
\newcommand{\ur}{\uu_{r}}
\newcommand{\ui}{\uu_{i}}
\newcommand{\pr}{p_{r}}
\newcommand{\pim}{p_{i}}
\newcommand{\vr}{\vv_{r}}
\newcommand{\vi}{\vv_{i}}
\newcommand{\aaa}{a}
\newcommand{\aat}{a^{t}} 
\newcommand{\taat}{\tilde{a}^{t}} 
\newcommand{\LL}{\vect{\mathsf{L}}}
\newcommand{\sfTheta}{\vect{\Theta}}
\newcommand{\HH}{\vect{\mathsf{H}}}
\newcommand{\VV}{\vect{\theta}} 
\newcommand{\Vn}{\theta_n}
\newcommand{\nn}{\vect{n}}
\newcommand{\nnt}{\vect{n}_t}
\newcommand{\abs}[1]{\vert{#1}\vert}
\newcommand{\norm}[1]{\left\|{#1}\right\|}
\newcommand{\dn}[1]{\partial_{\nn}{#1}}
\newcommand{\intO}[1]{\int_{\dom}{#1}{\, {d} x}}
\newcommand{\intdO}[1]{\int_{\partial\dom}{#1}{\, {d} \sigma}}
\newcommand{\intOt}[1]{\int_{\dom_{t}}{#1}{\, {d} x_{t}}}  
\newcommand{\intS}[1]{\int_{\Sigma}{#1}{\, {d} \sigma}} 
\newcommand{\intSt}[1]{\int_{\Sigma_{t}}{#1}{\, {d} \sigma_{t}}}
\newcommand{\intG}[1]{\int_{\Gamma}{#1}{\, {d} \sigma}}
\begin{document}

\title{Detecting immersed obstacle in Stokes fluid flow using the coupled complex boundary method}

\author{J. F. T. Rabago
  \and
  L. Afraites
  \and
  H. Notsu
}

\newcommand{\Addresses}{{% additional braces for segregating \footnotesize
  \bigskip
  \footnotesize

  J.~.F.~.T.~Rabago, \textsc{Faculty of Mathematics and Physics, Institute of Science and Engineering,
        Kanazawa University, Kakumamachi, Kanazawa 920-1192, Japan}\par\nopagebreak
  \textit{E-mail address}, J.~.F.~.T.~Rabago: \texttt{jfrabago@gmail.com}

  \medskip

  L.~Afraites, \textsc{EMI, FST, Universit\'{e} Sultan Moulay Slimane,	 B\'{e}ni-Mellal 23000, Morocco}\par\nopagebreak
  \textit{E-mail address}, L.~Afraites: \texttt{l.afraites@usms.ma}

  \medskip

  H.~Notsu, \textsc{Faculty of Mathematics and Physics, Institute of Science and Engineering,
        Kanazawa University, Kakumamachi, Kanazawa 920-1192, Japan}\par\nopagebreak
  \textit{E-mail address}, H.~Notsu: \texttt{notsu@se.kanazawa-u.ac.jp}

}}

\maketitle

%\title[Detecting immersed obstacle in Stokes fluid flow using CCBM]{Detecting immersed obstacle in Stokes fluid flow using the coupled complex boundary method}%%%\footnote{This work is partially supported by JSPS KAKENHI Grant Numbers JP20KK0058, JP18H01135, JP21H04431, and JP20H01823, and JST CREST Grant Number JPMJCR2014.}}

%\author[J. F. T. Rabago]{Julius Fergy Tiongson Rabago}
%\address{Faculty of Mathematics and Physics, Institute of Science and Engineering\\%
%        Kanazawa University, Kakumamachi, Kanazawa 920-1192, Japan}
%\email{jfrabago@gmail.com} 

%\author[H. Notsu]{Hirofumi Notsu}
%\address{Faculty of Mathematics and Physics, Institute of Science and Engineering\\%
%        Kanazawa University, Kakumamachi, Kanazawa 920-1192, Japan}
%\email{notsu@se.kanazawa-u.ac.jp} 

%\author[L. Afraites]{Lekbir Afraites}
%\address{EMI, FST, Universit\'{e} Sultan Moulay Slimane\\%
%	 B\'{e}ni-Mellal 23000, Morocco}
%\email{l.afraites@usms.ma} 

%%%\date{\today \ (Last version: January 9, 2024)}
%\date{\today}
% The correct dates will be entered by the editor

%\maketitle

\begin{abstract} 
A non-conventional shape optimization approach is introduced to address the identification of an obstacle immersed in a fluid described by the Stokes equation within a larger bounded domain, relying on boundary measurements on the accessible surface. 
The approach employs tools from shape optimization, utilizing the coupled complex boundary method to transform the over-specified problem into a complex boundary value problem by incorporating a complex Robin boundary condition. 
This condition is derived by coupling the Dirichlet and Neumann boundary conditions along the accessible boundary. 
The identification of the obstacle involves optimizing a cost function constructed based on the imaginary part of the solution across the entire domain. 
The subsequent calculation of the shape gradient of this cost function, rigorously performed via the rearrangement method, enables the iterative solution of the optimization problem using a Sobolev gradient descent algorithm. 
The feasibility of the method is illustrated through numerical experiments in both two and three spatial dimensions, demonstrating its effectiveness in reconstructing obstacles with pronounced concavities under high-level noise-contaminated data, all without perimeter or volume functional penalization.

\textit{Keywords:}{ coupled complex boundary method, obstacle problem, shape inverse problem, Stokes equation, shape optimization, rearrangement method, and adjoint method}
%%%% \PACS{PACS code1 \and PACS code2 \and more}
%%%\subclass{49Q10 \and 49K20 \and 65K10 \and 35R25}
\end{abstract}

\tableofcontents
%--------------------------  INTRODUCTION --------------------------
%--------------------------  INTRODUCTION --------------------------
%--------------------------  INTRODUCTION --------------------------
\section{Introduction}\label{sec:Introduction}
In this study, we are interested in the problem of reconstructing the shape of an obstacle or inclusion $\omega$ immersed in a fluid living within a larger bounded domain $D$ from measurements taken on the accessible boundary.
Our main objective is to numerically solve this shape inverse problem through a non-conventional shape optimization approach, which consists of a system of complex partial differential equations (PDEs).
%
%
%
%--------------------------  PROBLEM SETTING -------------------------
\subsection{Problem setting}
Let $D \subset \mathbb{R}^{d}$, $d \in \{2,3\}$, be a given bounded Lipschitz domain, i.e., $D$ is a bounded, open, and connected set.
We fix a small number $\delta > 0$ and denote by $\mathcal{O}_{\delta}$ the set of all subdomain $\omega$ with $\mathcal{C}^{1,1}$ boundary $\Gamma := \partial\omega$ such that $d(x,\Sigma) > \delta$ for all $x \in \omega$, where $\Sigma:=\partial{D}$, and $\Omega := D \setminus \overline{\omega}$ is connected.
We call $\mathcal{O}_{\delta}$ the set of admissible geometries (or inclusions). 
%%%%%Notably, the inclusions are assumed to be far from the accessible boundary $\Sigma$, and $\Omega := D \setminus \overline{\omega}$ is connected.
Hereinafter, we call $\Omega$ an \textit{admissible domain} if $\Omega = D \setminus \overline{\omega}$ for some $\omega \in \mathcal{O}_{\delta}$.
%%%If not specified, the regularity of $\Omega$ is always assumed to be at least of class $\mathcal{C}^{1,1}$.

For a given Neumann boundary data $\vect{g} \in H^{-1/2}(\Sigma)^{d} := (H^{1/2}(\Sigma; \mathbb{R}^{d}))'$, $\vect{g} \not\equiv \vect{0}$, and a corresponding Dirichlet boundary measurement $\ff \in H^{1/2}(\Sigma)^{d}:=H^{1/2}(\Sigma; \mathbb{R}^{d})$ (or vice versa), the inclusion $\omega$, the vector function $\uu=\uu(\Omega)$ representing the velocity of fluid, and a scalar function $p=p(\Omega)$ representing the pressure, satisfy the over-specified boundary value problem (BVP):
\begin{equation}
\label{eq:overdetermined_original}
	-\nabla \cdot \sigma(\uu,p)	 = \vect{0}	\ \text{in $\Omega$},\quad
	\dive{\uu}	= 0 \ \text{in $\Omega$},\quad
	\uu = \ff \ \ \text{and} \ \ \sigma(\uu,p)\nn = \vect{g} \ \text{on $\Sigma$},\quad
	\uu = \vect{0} \ \text{on $\Gamma$}.
%%%\left\{\arraycolsep=1.4pt\def\arraystretch{1.1}
%%%\begin{array}{rcll}
%%%	-\nabla \cdot \sigma(\uu,p)		&=&\vect{0}	&\ \text{in $\Omega$},\\
%%%	\dive{\uu}					&=&0 		&\ \text{in $\Omega$},\\
%%%%%%	\uu 						&=&\ff  		&\ \text{on $\Sigma$},\\
%%%	\uu = \ff \ \ \text{and} \ \ \sigma(\uu,p)\nn			&=&\vect{g} 	&\ \text{on $\Sigma$},\\
%%%	\uu 						&=&\vect{0} 	&\ \text{on $\Gamma$}.
%%%\end{array}
%%%\right.
\end{equation}
For technical reasons, we assume -- without further notice -- that the boundary data $\ff \in H^{1/2}(\Sigma)^{d}$, $\ff \not\equiv \vect{0}$, satisfies the standard flux compatibility condition
\begin{equation}\label{eq:compatibility_condition} 
	\intS{\uu \cdot \nn} = \intS{\ff \cdot \nn} = \langle \ff, \nn \rangle_{\Sigma} = 0.
\end{equation}
In \eqref{eq:overdetermined_original}, $\sigma$ denotes the stress tensor defined by $\sigma(\uu,p):={\alpha} \big(\nabla \uu + \nabla \uu^{\top} \big)-p\,{id}$, and ${\alpha}$ is the coefficient of kinematic viscosity while $id$ stands for the $d$-dimensional identity matrix\footnote{Here, $id$ also denotes the identity operator. If there is no confusion, this abuse of notation is used throughout the paper.}.
The notation $\nabla^{\top} \uu$ stands for the transposed of the matrix $\nabla{\uu}$.
Because $\dive{\uu}=0$ in  $\Omega$, then we can actually write $-\dive{\sigma(\uu,p)}=-{\alpha} \Delta \uu  + \nabla p$ in $\Omega$.
In this case, we simply express $\sigma(\uu,p)\nn={\alpha} \dn{\uu}  - p\nn$ on $\Sigma$, where $\partial_{n}\uu$ denotes the outward normal derivative (with $\nn$ being the outward unit normal vector to $\Omega$).
Here, we will use both of these expressions interchangeably.

Henceforth, we assume here that there exists $\omega \in \mathcal{O}_{\delta}$ such that \eqref{eq:overdetermined_original} admits a solution. 
This means that the measurement $\ff$ is perfect, that is to say without error.
Therefore, we can consider the shape inverse problem formulated as follows:
\begin{problem}\label{problem:gip}
	Find $w\in \mathcal{O}_{\delta}$ and the pair $(\uu, p)$ that satisfies the over-specified system \eqref{eq:overdetermined_original}.
\end{problem}  
If $(\omega,\uu,p)$ solves Problem \ref{problem:gip}, then we call the triplet a \textit{solution} of \eqref{eq:overdetermined_original}.

As mentioned earlier, we will tackle the inverse problem of reconstructing $\omega$, employing tools from shape optimization. 
In this direction, we first need to discuss the identifiability of the inclusion from boundary measurements. 
This fundamental question in the Dirichlet case was addressed by Alvarez-Conca-Friz-Kavian-Ortega in \cite{Alvarezetal2005}:
\begin{theorem}[{\cite{Alvarezetal2005}}]%[{\cite[Thm. 1.2]{Alvarezetal2005}}]
	Let ${D} \subset \mathbb{R}^{d}$, $d \in \{2,3\}$, be a bounded $\mathcal{C}^{1,1}$ domain, and $\Sigma^{0}$ be a non-empty open subset of $\Sigma := \partial{D}$.
	Moreover,  let
	\[
		\omega^{1}, \omega^{2} \in \{ \omega \Subset {D} \mid \ \text{$\omega$ is open, Lipschitz, and $\Omega:=D\setminus\overline{\omega}$ is connected}\},
	\]
	and $\ff \in H^{1/2}(\Sigma)^{d}$ with $\ff \neq \vect{0}$, satisfying the flux condition $\displaystyle \intS{\ff \cdot \nn} = 0$.
	
%	For $c_{\ast} \in \{0,1\}$, let $(\uu^{j}, p^{j})$, for $j=1,2$, be a solution of
	Let $(\uu^{j}, p^{j})$, for $j=1,2$, be a solution of
	\[
	\left\{\arraycolsep=1.4pt\def\arraystretch{1.1}
	\begin{array}{rcll}
	-\dive{\sigma(\uu^{j},p^{j})} %+ c_{\ast} \dive{(\uu^{j} \otimes \uu^{j})}
	&=&\vect{0} & \ \text{in $\Omega^{j}$},\\
	\dive{\uu^{j}}			&=&0 & \ \text{in $\Omega^{j}$},\\
	\uu^{j} 				&=&\ff  & \ \text{on $\Sigma:=\partial{D}$},\\ 
	\uu^{j} 				&=&\vect{0} & \ \text{on $\Gamma^{j}:=\partial\omega^{j}$},
	\end{array}
	\right.
	\]
	If $(\uu^{1}, p^{1})$ and $(\uu^{2}, p^{2})$ are such that
	\[
		\sigma(\uu^{1},p^{1})\nn = \sigma(\uu^{2},p^{2})\nn \quad \text{on $\Sigma^{0}$},
	\]
	then
	\[
		\omega^{1} \equiv \omega^{2}.
	\]
\end{theorem}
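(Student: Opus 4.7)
The plan is to split the argument into a unique-continuation step, which propagates the equality of the two Cauchy data from $\Sigma^{0}$ inward, and a geometric step, which turns any disagreement of the two inclusions into a contradiction with $\ff\not\equiv\vect{0}$.

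\textbf{Step 1 (unique continuation).} Let $G$ be the connected component of $D\setminus\overline{\omega^{1}\cup\omega^{2}}$ whose boundary contains $\Sigma^{0}$. On $G$ both pairs $(\uu^{j},p^{j})$ satisfy the homogeneous Stokes system, so $(\tilde\uu,\tilde p):=(\uu^{1}-\uu^{2},\,p^{1}-p^{2})$ does as well, with $\tilde\uu=\vect{0}$ on $\Sigma^{0}$ (same Dirichlet trace $\ff$) and $\sigma(\tilde\uu,\tilde p)\nn=\vect{0}$ on $\Sigma^{0}$ (by hypothesis). Since $\tilde\uu$ vanishes on $\Sigma^{0}$, all tangential derivatives of $\tilde\uu$ vanish there; combined with $\dive\tilde\uu=0$ this forces $\partial_{\nn}\tilde\uu\cdot\nn=0$ on $\Sigma^{0}$, and then the normal component of the vanishing stress yields $\tilde p=0$ on $\Sigma^{0}$. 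One can therefore extend $(\tilde\uu,\tilde p)$ by zero to a one-sided neighborhood of $\Sigma^{0}$ outside $D$: the jumps in both the velocity and the normal stress vanish, so the extension is still a weak Stokes solution. As it is identically zero on an open set, the strong unique-continuation principle of Fabre--Lebeau for the Stokes system forces $\tilde\uu\equiv\vect{0}$ on the whole of $G$, and in particular $\uu^{1}\equiv\uu^{2}$ there.

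\textbf{Step 2 (contradiction).} Suppose, aiming for a contradiction, that $\omega^{1}\neq\omega^{2}$. Up to swapping indices, $\omega^{1}\setminus\overline{\omega^{2}}$ is non-empty; pick a connected component $\mathcal{O}$ of it. Then $\mathcal{O}\subset\Omega^{2}$, its boundary $\partial\mathcal{O}$ is contained in $\Gamma^{1}\cup\Gamma^{2}$, and $\partial\mathcal{O}$ inherits $\mathcal{C}^{1,1}$-regularity. On $\partial\mathcal{O}\cap\Gamma^{2}$ one has $\uu^{2}=\vect{0}$ directly, and on $\partial\mathcal{O}\cap\Gamma^{1}$, approaching from the $G$-side, $\uu^{2}=\uu^{1}=\vect{0}$ by Step 1 and the boundary condition on $\Gamma^{1}$; continuity of $\uu^{2}$ on $\overline{\Omega^{2}}$ then gives $\uu^{2}=\vect{0}$ on all of $\partial\mathcal{O}$. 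Energy uniqueness for the homogeneous Stokes Dirichlet problem on $\mathcal{O}$ yields $\uu^{2}\equiv\vect{0}$ on $\mathcal{O}$, and a second application of Fabre--Lebeau (now as an interior unique-continuation statement, since $\mathcal{O}$ is an open subset of the connected set $\Omega^{2}$) propagates this to $\uu^{2}\equiv\vect{0}$ throughout $\Omega^{2}$, contradicting $\uu^{2}=\ff\not\equiv\vect{0}$ on $\Sigma$.

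\textbf{Main obstacle.} The entire strategy rests on the strong unique-continuation principle for the Stokes equations (due to Fabre--Lebeau, proved via Carleman estimates), which is the only nontrivial analytic input; the rest is geometry. Within that geometry, the most delicate items are (i) verifying that the pressure jump really does vanish on $\Sigma^{0}$ so that the zero extension in Step 1 is a legitimate Stokes solution, and (ii) checking that the component $\mathcal{O}$ has a boundary regular enough for both the Dirichlet uniqueness and the subsequent interior unique-continuation to apply. A secondary subtlety is ensuring the argument is symmetric with respect to swapping $\omega^{1}$ and $\omega^{2}$, so that the case distinction on which of $\omega^{1}\setminus\overline{\omega^{2}}$ or $\omega^{2}\setminus\overline{\omega^{1}}$ is non-empty causes no loss of generality.
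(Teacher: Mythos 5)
First, a point of reference: the paper does not prove this theorem at all --- it is quoted from \cite{Alvarezetal2005} as a known identifiability result, so there is no in-paper argument to compare against. Your overall strategy (unique continuation \`{a} la Fabre--Lebeau to identify the two solutions on the common outer component $G$, followed by an energy plus interior unique-continuation argument on a piece of the symmetric difference) is precisely the strategy of the cited source, and your Step~1 is essentially sound: the zero extension across $\Sigma^{0}$ is legitimate because both the velocity trace and the jump of $\sigma(\tilde{\uu},\tilde{p})\nn$ vanish there, and interior unique continuation then forces $\uu^{1}=\uu^{2}$ on $G$ (the observation that $\tilde{p}=0$ on $\Sigma^{0}$ is a correct consequence but is not actually needed as an input).

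Step~2, however, has a genuine gap. You take $\mathcal{O}$ to be a connected component of $\omega^{1}\setminus\overline{\omega^{2}}$ and assert that on $\partial\mathcal{O}\cap\Gamma^{1}$ one may evaluate $\uu^{2}$ ``approaching from the $G$-side''. But a point of $\Gamma^{1}\setminus\overline{\omega^{2}}$ need not lie on $\partial G$: the exterior side of $\Gamma^{1}$ there may belong to a \emph{different} connected component of $D\setminus\overline{\omega^{1}\cup\omega^{2}}$, one trapped by the union of the two obstacles and therefore untouched by Step~1. Concretely, in two dimensions let $\omega^{1}$ be a C-shaped inclusion and $\omega^{2}$ a bar overlapping its two tips so that together they enclose a hole $H$; each $\Omega^{j}$ is still connected (so both obstacles are admissible), yet the inner portion of $\Gamma^{1}$ bounding your $\mathcal{O}$ abuts $H$ rather than $G$, and you have no information about $\uu^{2}$ there. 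The standard repair is to take $\mathcal{O}$ instead to be a connected component of $\Omega^{2}\setminus\overline{G}$ (nonempty whenever $\omega^{1}\setminus\overline{\omega^{2}}\neq\emptyset$, since that set is contained in $\Omega^{2}$ and disjoint from $\overline{G}$): then $\partial\mathcal{O}\subset\Gamma^{2}\cup\partial G$, on $\Gamma^{2}$ one has $\uu^{2}=\vect{0}$ directly, and on $\partial G\setminus\Sigma\subset\Gamma^{1}\cup\Gamma^{2}$ the identity $\uu^{2}=\uu^{1}$ on $G$ combined with the homogeneous conditions on $\Gamma^{1}$ and $\Gamma^{2}$ gives $\uu^{2}=\vect{0}$. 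With that modification the remainder of your argument (energy/Korn uniqueness on $\mathcal{O}$, interior unique continuation on the connected set $\Omega^{2}$, contradiction with $\ff\not\equiv\vect{0}$ on $\Sigma$) goes through; your own caveat about the regularity of $\partial\mathcal{O}$ needed for the integration by parts remains a real technical point that the cited reference addresses.
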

A similar result can also be proven for the case when the working problem is given with purely the Neumann boundary condition, and with ${\alpha} \dn{\uu} - p\nn$ is given instead of $\sigma(\uu,p)\nn$, $\dom$ a Lipschitz regular domain, and $\ff \in H^{1/2}(\Sigma)^{d}$, see \cite[Thm. 2.2]{BadraCaubetDambrine2011}.
This aforementioned result states that given a fixed $\ff$, two different geometries $\omega^{1}$ and $\omega^{2}$ in $\mathcal{O}_{\delta}$ yield two different measures ${\bgg}^{1}$ and ${\bgg}^{2}$.
%
%

%
%
%
%--------------------------  CONVENTIONAL SHAPE OPTIMIZATION APPROACHES -------------------------
\subsection{Shape optimization techniques for shape recovery}
To recover the shape of the obstacle $\omega$, a common strategy involves minimizing a cost function. 
Various types of cost functions may be considered for this purpose; see \cite{BadraCaubetDambrine2011,Caubet2013,CaubetDambrineKatebTimimoun2013} for examples of existing cost functions.
A conventional approach is to consider the Neumann-least-squares-fitting cost function given by \cite{BadraCaubetDambrine2011, Caubet2013}
\begin{equation}
\label{eq:cost_tracking_Neumann_data}
	J_{N}(\omega)=\cfrac{1}{2}\int_{\Sigma} |\sigma(\ud(\omega),\pd(\omega))\nn-{\bgg}|^{2},
\end{equation}
where ${\bgg}$ is the boundary measurement and $(\ud,\pd)$ is the solution of the pure Dirichlet BVP:
\begin{equation}
\label{eq:state_ud}
-{\alpha} \Delta  \ud+ \nabla \pd = \vect{0} \ \text{ in } \Omega, \qquad
\dive{\ud}	= 0  \ \text{ in } \Omega, \qquad
\ud = \vect{0} \ \text{ on } \Gamma, \qquad
\ud = \ff \ \text{ on } \Sigma.
%%%\left\{\arraycolsep=1.4pt\def\arraystretch{1.1}
%%%\begin{array}{rcll}
%%%	-{\alpha} \Delta  \ud+ \nabla \pd &=& \vect{0} & \ \text{ in } \Omega  ,\\
%%%	\dive{\ud}		&=& 0 & \ \text{ in } \Omega, \\
%%%	\ud			&=& \vect{0} & \ \text{ on } \Gamma, \\
%%% 	\ud			&=& \ff & \ \text{ on } \Sigma.
%%%\end{array}
%%%\right.
\end{equation}
Given the compatibility condition \eqref{eq:compatibility_condition}, problem \eqref{eq:state_ud} admits a unique solution once a normalization condition on the pressure $p$ is imposed (see, e.g., \cite[Chap. 1]{Temam2001}). 
In the minimization setting `$J_{N}(\omega) \to \inf$', one can choose the normalization $\langle \sigma(\ud,\pd)\nn, \nn \rangle_{\Sigma} = \langle \bgg, \nn \rangle_{\Sigma}$. 
To ensure that the cost function $J_{N}$ is well-defined, it is necessary to assume that the Dirichlet data input is such that $\ff \in H^{3/2}(\Sigma)^{d}$. 
To avoid this higher regularity assumption, it is usually preferable to fit Dirichlet data rather than Neumann data.
In this case, one evaluates the misfit of the Dirichlet data using the least-squares method:
\begin{equation}\label{eq:cost_tracking_Dirichlet_data}
	J_{D}(\omega)=\cfrac{1}{2}\int_{\Sigma} |\un(\omega) - \ff|^{2},
\end{equation}
where  $(\un,\pn)$ satisfies the mixed Dirichlet-Neumann BVP:
\begin{equation}
\label{eq:state_un}
    -{\alpha} \Delta \un+\nabla \pn = \vect{0} \ \text{ in } \Omega ,\quad
    \dive{\un} = 0 \ \text{ in } \Omega,\quad
    \un = \vect{0} \ \text{ on } \Gamma,\quad
    \sigma(\un,\pn)\nn = {\bgg} \ \text{ on } \Sigma.
%%%%\left\{\arraycolsep=1.4pt\def\arraystretch{1.1}
%%%%\begin{array}{rcll}
%%%%	-{\alpha} \Delta \un+\nabla \pn 	&=& \vect{0} & \ \text{ in } \Omega  ,\\
%%%%	\dive{\un}				&=& 0 & \ \text{ in } \Omega, \\
%%%%		\un				&=& \vect{0} & \ \text{ on } \Gamma, \\
%%%%	\sigma(\un,\pn)\nn 		&=& {\bgg} & \ \text{ on } \Sigma.
%%%%\end{array}
%%%%\right.
\end{equation}
It can be shown that system \eqref{eq:state_un} admits a unique solution once a normalization condition on $\un$ is imposed.

Another cost function one may take into consideration is the called Kohn-Vogelius functional defined as follows \cite{CaubetDambrineKateb2013}:
\begin{equation}\label{eq:cost_KV}
	J_{KV}(\omega)=\cfrac{1}{2}\intO{ {\alpha} \abs{\nabla(\ud(\omega)-\un(\omega))}^{2}}, 
\end{equation}
where $\un$ is the solution of Neumman problem \eqref{eq:state_un} and $\ud$ is the solution of the Dirichlet one \eqref{eq:state_ud}.
The cost function $J_{KV}$ measures the energy-gap between the solutions of \eqref{eq:state_ud} and \eqref{eq:state_un}.
This function vanishes only if $\ud=\un$ which is the case when the shape $\omega$ fits the exact inclusion.

The optimization problems and shape functionals mentioned above were studied in the context of closely related shape inverse problems for the Stokes equations, utilizing tools from shape optimization, as discussed in \cite{CaubetDambrineKateb2013, CaubetDambrineKatebTimimoun2013}. 
This involves employing the shape derivative of the associated cost functional with respect to the shape of the region to be reconstructed. 
In the three works mentioned above, the authors characterized the shape gradient of the same functionals to facilitate numerical resolution. 
Inspired by these works, we will do the same here, but through a proposal of a new shape optimization formulation.
%%%Then, the instability of the inverse problems are investigated by computing the shape Hessian at critical point.

%
%
%
% 
\begin{remark}[Existence, uniqueness, and regularity]
For detailed discussions on the existence, uniqueness, and regularity of solutions to the Stokes problem with Dirichlet boundary conditions, we recommend referring to \cite{BoyerFabrie2006} or \cite{Galdi1994}. 
Meanwhile, for the existence, uniqueness, and local regularity results for the Stokes' problem with mixed conditions \eqref{eq:state_un}, one may consult Theorem A.1 and Theorem A.2 in \cite[Appx. A]{CaubetDambrineKatebTimimoun2013}.
\end{remark}
In this study, we propose the use of the Coupled Complex Boundary Method (CCBM) in a shape optimization setting to numerically approximate a solution to the inverse problem. We first emphasize that, in the classical approach \eqref{eq:cost_tracking_Neumann_data} and \eqref{eq:cost_tracking_Dirichlet_data}, the required data for fitting is carried out on the boundary. 
However, this condition often renders the numerical solution unstable.
In contrast, in the case of CCBM, the data needed for fitting is transferred from the boundary to the interior of the domain -- similar to \eqref{eq:cost_KV}. 
This is because CCBM couples the Neumann and Dirichlet data to derive a Robin condition. 
Consequently, the Neumann data and the Dirichlet data are represented as the real and imaginary parts, respectively, of the Robin boundary condition. 
As a result, this method is more favorable in terms of numerical realization.
However, we note that the new boundary value problem is complex, leading to an increase in the dimension of the corresponding discrete system. 
Nevertheless, the method proves to be more robust, and effective numerical methods could be explored based on the new framework.
%
%
%
%
%--------------------------  NOVELTY OF THE WORK -------------------------
\subsection{The CCBM and contribution to the literature} 
Initially introduced by Cheng et al. in \cite{Chengetal2014} for an inverse source problem, CCBM has found applications in various problmes.
Gong et al. utilized it for an inverse conductivity problem in \cite{Gongetal2017}, and Cheng et al. applied it to parameter identification in elliptic problems in \cite{ZhengChengGong2020}. 
Afraites extended the method to shape inverse problems for the Laplace equation in \cite{Afraites2022} and geometric inverse source problems in \cite{AfraitesMasnaouiNachaoui2022}. 
Rabago pioneered its use in free boundary problems, specifically solving the exterior case of the Bernoulli problem in \cite{Rabago2022}.
Rabago and Notsu further extended the use of CCBM to free surface problems for the Stokes equations in \cite{RabagoNotsu2024}. 
Notably, there has been no prior application of CCBM to obstacle problems except in \cite{Ouiassaetal2022}, where it was used to solve an inverse Cauchy Stokes problem. 
In this study, the authors employed CCBM as a numerical strategy to determine fluid velocity and flux over a specific boundary portion, reconstructing an obstacle immersed in a fluid described by the Stokes equation through numerical shape optimization techniques. 
The novelty of our work lies in applying CCBM to shape inverse obstacle problems in fluid flows. 
Emphasizing the motivation, it is worth noting that, in the context of free boundary problems \cite{Rabago2022} and free surface problems \cite{RabagoNotsu2024}, CCBM outperforms the classical least-squares approach in terms of stability and accuracy in obtaining the expected optimal shape solution.
Additionally, in terms of computational cost, CCBM is roughly comparable to the Kohn-Vogelius method, see \cite{Rabago2022}.

We highlight the contribution of this examination to the literature. 
From a theoretical point of view, we emphasize the following.
\begin{itemize}
\item This study introduces a novel cost functional \eqref{eq:ccbm_cost_function} to address the shape inverse obstacle Problem \ref{problem:gip}. 
The formulation is derived from the CCBM formulation \eqref{eq:ccbm}. 
Notably, this functional has not been explored in the current literature within the context of the present problem.
\item The study is centered on the computation of the first-order shape derivative of the cost functional associated with CCBM (Theorem \ref{thm:shape_derivative}). 
This computation is achieved only through the H\"{o}lder continuity of state variables (Lemma \ref{lem:holder_continuity}), following the methodology outlined in \cite{RabagoNotsu2024}. 
Our approach deviates from the conventional chain rule method (see, e.g., \cite{BadraCaubetDambrine2011, CaubetDambrineKatebTimimoun2013}) and the traditional Lagrangian formulation technique (see \cite{KovtunekoOhtsuka2022}), eliminating the need to calculate material or shape derivatives of states.
\end{itemize}
Meanwhile, from a numerical standpoint, we depart innovatively from the methodology applied in \cite{CaubetDambrineKatebTimimoun2013}. 
Our approach distinguishes itself in three key aspects. 
\begin{itemize}
\item In the previously mentioned work, the authors addressed the ill-posedness of the problem by employing parametric regularization, incorporating a parametric model of shape variations. 
This involved imposing restrictions on domains, confining them to star-shaped configurations, and using polar coordinates for parametrization. 
This approach led to a unique method for calculating deformation fields.
{In contrast, our approach relies on a straightforward Sobolev gradient-based method inspired by \cite{Doganetal2007}. 
Unlike the parametric method, we avoid the complexities associated with determining the number of parameters needed for a satisfactory approximation of the target shape. 
The parametric approach, as presented in \cite{CaubetDambrineKatebTimimoun2013}, faces challenges in accurately determining this number. 
Insufficient parameters may hinder the detection of non-trivial shapes, while an excess may result in functional degeneracy. 
Consequently, this approach, in our view, poses implementation difficulties, especially in three dimensions (3D).}
\item In our numerical simulations, we use lower-degree finite elements, in contrast to \cite{CaubetDambrineKatebTimimoun2013}, which employed higher-degree elements (refer to Remark \ref{rem:lower_degree_finite_elements}). 
It is crucial to highlight that utilizing higher-degree finite elements, like $P4/P3$ finite elements, for computing velocity-pressure variables in the Stokes equations, especially in 3D cases, is computationally expensive.
\item {The work in \cite{KovtunekoOhtsuka2022} solely computes the shape derivative of a Lagrangian functional without including any numerical experiments. 
Additionally, to the best of our knowledge, prior numerical investigations on shape inverse obstacle problems have exclusively concentrated on two-dimensional (2D) settings (refer to, e.g., \cite{CaubetDambrineKatebTimimoun2013}).
In contrast, our current study extends this scope by evaluating the effectiveness of our proposed methodology in 3D cases. 
This further warrants the execution of the present work.}
\end{itemize}

The outline of this paper is organized as follows.
In Section \ref{sec:preliminaries}, we present the coupled complex boundary method appropriate for our inverse obstacle problem. 
We also reformulate it into a shape optimization setting, introducing the least-squares fitting for the imaginary part of the complex PDE's solution.
Then, in Section \ref{sec:Computation_of_the_shape_derivative}, we rigorously derive the shape derivative of this cost functional.
After that, in Section \ref{sec:numerical_experiments}, we provide an algorithm based on the Sobolev gradient method with the finite element scheme. We then offer various numerical examples in two and three spatial dimensions to illustrate the feasibility and effectiveness of the proposed method.
We conclude the paper with a brief summary in Section \ref{sec:conclusion}.
Finally, Appendix \ref{appendix:lemmata_proofs} is dedicated to proving the key lemmas used to derive the shape gradient via the rearrangement method.
%
%
%
%
%
%--------------------------------------------------- PRELIMINARIES ---------------------------------------------------
%--------------------------------------------------- PRELIMINARIES ---------------------------------------------------
%--------------------------------------------------- PRELIMINARIES ---------------------------------------------------
%----------------------------------------------------------------------------------------------------------------------------------
%	WEAK FORMULATION AND SOLVABILITY OF THE STATE PROBLEM
%----------------------------------------------------------------------------------------------------------------------------------
\section{Preliminaries}\label{sec:preliminaries}
Our immediate goal is to rephrase the inverse problem \eqref{eq:cost_tracking_Dirichlet_data} using CCBM-based shape optimization. 
To start, we express \eqref{eq:overdetermined_original} using the concept of CCBM and analyze the well-posedness of the resulting PDE system. 
To save space, we introduce notations aligned with those in \cite{RabagoNotsu2024}.

\textbf{Notations.} Let $\partial_{i} := \partial/\partial x_{i}$ and $\nabla:= \begin{pmatrix} \partial_{1}, \ldots, \partial_{d} \end{pmatrix}^{\top}$. 
We define $\dn{} := \partial/\partial \nn  = \sum_{i=1}^{d} n_{i} \partial_{i}$, where $\nn = (n_{1}, \ldots, n_{d})^{\top} \in \mathbb{R}^{d}$ is the outward unit normal vector to $\Omega$.
The inner product of column vectors $\vect{a}, \vect{b} \in \mathbb{R}^{d}$ is denoted by $\vect{a} \cdot \vect{b} := \vect{a}^{\top} \vect{b} \equiv \langle \vect{a}, \vect{b} \rangle_{\mathbb{R}^{d}} = \langle \vect{a}, \vect{b} \rangle$, the latter being used when there is no confusion. 
For simplicity, we occasionally use $\varphi_{n}$ instead of $\bphi \cdot \nn$, where $\bphi$ is a vector-valued function.
For a vector-valued function $\uu := (u_{1}, \ldots, u_{d})^{\top} : \Omega \to \mathbb{R}^{d}$, $(\nabla \uu)_{ij} := (\partial_{i} u_{j})_{i,j = 1, \ldots, d}$ represents its gradient, while its Jacobian is denoted by $(D\uu)_{ij} = (\partial_{j} u_{i})_{i,j=1,\ldots,d} \equiv \nabla^{\top} \uu$. Accordingly, we use $\partial_{\nn} \uu := (D\uu)\nn$.

Let $1 \leqslant p \leqslant \infty$ and $m \in \mathbb{N} \cup \{0\}$. 
The function spaces $W^{m,p}(\Omega)$, $W^{0,p}(\Omega)=L^{p}(\Omega)$, and $H^{m}(\Omega) = W^{m,2}(\Omega)$ denote the standard real Sobolev spaces equipped with their natural norms, also expressed in usual notations \cite[Chap. IV]{DautrayLionsv21998} (e.g., $\|\cdot\|_{H^{1}(\Omega)}$ is the standard $H^{1}(\Omega)$-norm).
We define the space $H^{m}(\Omega)^{d} := \{\uu: \Omega \to \mathbb{R}^{d} \mid u_{i} \in H^{m}(\Omega), \text{for $i = 1, \ldots, d$}\}$ with norm $\|\uu\|_{H^{m}(\Omega)^{d}} := ( \sum_{i=1}^{d} |u_{i}|^{2}_{H^{m}(\Omega)} )^{1/2}$.
A similar definition is given when $\Omega$ is replaced by $\partial\Omega$, with $m$ as a rational number.
%%%%Unless stated otherwise, in the paper, $\Omega$ is a domain of class $\mathcal{C}^{1,1}$.

In this paper, we let $\HH^{m}(\Omega)^{d}$ be the complex version of $H^{m}(\Omega)^{d}$ with the inner product $(\!(\cdot, \cdot)\!)_{m,\Omega,d}$ and norm $\vertiii{\, \cdot\, }_{m,\Omega,d}$ defined respectively as follows: 
for all $\uu, \vv \in \HH^{m}(\Omega)^{d}$, $(\!( \uu, \vv )\!)_{m,\Omega,d} = \sum_{j=1}^{d}(\uu, \cv)_{m,\Omega}$ and $\vertiii{\vv}_{m,\Omega,d} = \sqrt{ (\!( \vv,\vv )\!)_{m,\Omega, d}}$.
Note that, for real valued functions, $\vertiii{\cdot}_{m,\Omega,d} = \|\cdot\|_{W^{m,p}(\Omega)}$.
Here, the operation `$:$' stands for the Frobenius inner product and is defined as $\nabla \vv : \nabla \cw = \sum_{i,j=1}^{d} {\partial_{i} {v}_{j}} {\partial_{i} {\overline{w}}_{j}} \in \mathbb{R}$. 
With these definitions, we sometimes write $\vertiii{\vv}_{X} = \vertiii{\vv}_{1,\Omega,d}$ and $\vertiii{q}_{Q} = \vertiii{q}_{0,\Omega,d}$, dropping $d$ for convenience. 
For ease of writing, we use the function space notations $X := \HH^{1}(\Omega)^{d}$, ${\Vgamma} := \HH^{1}_{\Gamma,\vect{0}}(\Omega)^{d} = \{ \bphi \in \HH^{1}(\Omega)^{d} \mid \bphi = \vect{0} \ \text{on $\Gamma$}\}$, $Q:= \LL^2(\Omega)$, and $\oQ = \left\{ q \in Q \mid \intO{q} = 0\right\}$.

We use the symbol $c$ to represent a positive constant, and its specific value may vary in different contexts. 
The symbol `$\lesssim$' denotes that if $x \lesssim y$, then there exists a constant $c>0$ such that $x \leqslant c y$. 
Similarly, $y \gtrsim x$ is defined as $x \lesssim y$. 
Lastly, we adhere to standard notations, emphasizing them only for clarity.
%
%
%
%
%
%
%
%
%
%----------------------------------------------------------------------------------------------------------------------------------------------------------
\subsection{The CCBM formulation}
The CCBM formulation of \eqref{eq:overdetermined_original} is given by the complex PDE system
\begin{equation}
\label{eq:ccbm}
	-{\alpha} \Delta {\uu} + \nabla p 	= {\vect{0}} \ \text{  in } \Omega,\quad
	\dive{\uu}					= {\vect{0}} \ \text{ in } \Omega,\quad
	\sigma(\uu,p)\nn+ i \uu 		= {{\bgg} + i \ff} \ \text{ on }\Sigma, \quad
 	\uu						= {\vect{0}} \ \text{ on }\Gamma,
%%%%%\left\{\arraycolsep=1.4pt\def\arraystretch{1.1}
%%%%%\begin{array}{rcll}
%%%%%	-{\alpha} \Delta {\uu} + \nabla p 	&=& {\vect{0}} & \ \text{  in } \Omega,\\
%%%%%	\dive{\uu}					&=& {\vect{0}} & \ \text{ in } \Omega,\\
%%%%%	\sigma(\uu,p)\nn+ i \uu 		&=& {{\bgg} + i \ff} & \ \text{ on }\Sigma, \\
%%%%% 	\uu						&=& {\vect{0}} & \ \text{ on }\Gamma,
%%%%%\end{array}
%%%%%\right.
\end{equation}
where $i=\sqrt{-1}$ is the imaginary unit.
We call the solution of \eqref{eq:ccbm} as \textit{states} or \textit{state solutions}.

Let $(\uu,p) = ({\realu}+i{\imagu}, \realp+i{\imagp})$\footnote{This notation is used hereon without further notice.} be a solution pair to \eqref{eq:ccbm}.
Then, the real-valued functions ${\realu}$, ${\imagu}$, $\pr$, and $\pim$ satisfy the following systems of real PDEs:
\begin{equation}
\label{eq:real_u}%%%\tag{Re}
	-{\alpha} \Delta {\realu}+\nabla \realp = \vect{0} \ \text{ in } \Omega,\quad
	\dive{{\realu}} = 0 \   \text{ in } \Omega,\quad
	 \sigma({\realu},\realp)\nn = {\bgg} + {\imagu} \ \text{ on }\Sigma, \quad
 	{\realu} = \vect{0} \   \text{ on }\Gamma;
%%%\left\{
%%%\arraycolsep=1.4pt\def\arraystretch{1.1}
%%%\begin{array}{rcll}
%%%	-{\alpha} \Delta {\realu}+\nabla \realp   	&=& \vect{0} & \ \text{ in } \Omega,\\
%%%	\dive{{\realu}}					&=& 0 & \   \text{ in } \Omega,\\
%%%	 \sigma({\realu},\realp)\nn-{\imagu} 	&=& {\bgg} & \   \text{ on }\Sigma, \\
%%% 	{\realu} 						&=& \vect{0} & \   \text{ on }\Gamma;
%%%\end{array}
%%%\right.
\end{equation}
\begin{equation}
\label{eq:imaginary_u}%%%\tag{Im}
	-{\alpha} \Delta {\imagu}+\nabla {\imagp}  = \vect{0} \ \text{ in }\Omega,\quad
	\dive{{\imagu}}	= 0 \ \text{ in } \Omega,\quad
	\sigma({\imagu},{\imagp})\nn = \ff - {\realu}  \ \text{ on }\Sigma, \quad
	{\imagu} =  \vect{0} \ \text{ on }\Gamma.
%%%\left\{\arraycolsep=1.4pt\def\arraystretch{1.1}
%%%\begin{array}{rcll}
%%%	-{\alpha} \Delta {\imagu}+\nabla {\imagp}   &=& \vect{0} & \ \text{ in }\Omega,\\
%%%	\dive{{\imagu}}					&=& \vect{0} & \ \text{ in } \Omega,\\
%%%	\sigma({\imagu},{\imagp})\nn+ {\realu} 	&=& \ff  & \ \text{ on }\Sigma, \\
%%% 	{\imagu} 						&=& \vect{0} & \ \text{ on }\Gamma.
%%%\end{array}
%%%\right.
\end{equation} 
\begin{remark}\label{rem:equivalent}
If ${\imagu} = \vect{0}$ and ${\imagp}=0$ in $\Omega$, then ${\imagu}=\partial_{n}{\imagu}=\vect{0}$ on $\Sigma$. 
As a result, from the PDE systems \eqref{eq:real_u} and \eqref{eq:imaginary_u}, $(\omega,{\realu},\realp)$ is a solution to the original problem \eqref{eq:overdetermined_original}.
Conversely, if $(\omega,\uu,p)$ is a solution of \eqref{eq:overdetermined_original}, then clearly it satisfies \eqref{eq:ccbm}.
\end{remark}
Remark \ref{rem:equivalent} implies that Problem \ref{problem:gip} can equivalently be formulated as follows:
\begin{problem}{\label{prob:optimization_problem}}
Find  $\omega \in \mathcal{O}_{\delta}$ such that ${\imagu} = \vect{0}$ and ${\imagp}=0$ in $\Omega$, where $(\uu, p)$ satisfy \eqref{eq:ccbm}.
\end{problem}
%
%
%
%
%
%
%
%
%
%----------------------------------------------------------------------------------------------------------------------------------------------------------
\subsection{Well-posedness of the CCBM formulation}
Here, we briefly examine the well-posedness of the complex PDE system \eqref{eq:ccbm}. 
To do so, we introduce the following forms:
\begin{equation}\label{eq:forms_for_the_state_problem}
\left\{
	\begin{aligned}
		\aaa({\bphi},{\bpsi}) &= \intO{{\alpha} \nabla {\bphi} : \nabla {\cbpsi}} + i \intS{ {\bphi} \cdot {\cbpsi}}, \quad \text{where}\ {\bphi}, {\bpsi} \in {\Vgamma},\\
		b(\bphi,\lambda) &= -\intO{\lambda ( \nabla \cdot \cbphi) }, \quad \text{where}\ {\bphi}\in {\Vgamma}, \ \lambda \in {Q}, \\
		F(\bpsi) &= \intS{(\bgg + i \ff ) \cdot \cbpsi},  \quad \text{where}\ {\bpsi}\in {\Vgamma}.
 	\end{aligned}
\right.
\end{equation}
With the above forms, we can state the weak formulation of \eqref{eq:ccbm} as follows:
    \begin{equation}\label{eq:ccbm_weak_form}
    \text{Find\quad $({\uu},p) \in \Vgamma \times Q$\quad such that \quad }
    \left\{\arraycolsep=1.4pt\def\arraystretch{1.1}
    \begin{array}{rcll}
    	\aaa({\uu},{\bphi}) + b(\bphi,p)	&=& F(\bphi),		&\ \forall {\bphi}\in {\Vgamma},\\ 
    				b(\uu,\lambda)	&=& 0,			&\ \forall \lambda \in {Q}.\\ 
    \end{array}
    \right.
    \end{equation}
The well-posedness of the above variational problem is issued in the following lemma. 
\begin{lemma}\label{prop:well_posedness_of_CCBM}
	Let $(\ff,{\bgg}) \in H^{1/2}(\Sigma)^{d} \times H^{-1/2}(\Sigma)^{d}$ be given. 
	Then, problem \eqref{eq:ccbm_weak_form} admits a (unique) solution $(\uu,p) \in {\Vgamma} \times Q$.
	This solution depends continuously on the data and we have
	\[
		\vertiii{\uu}_{X} , \ \vertiii{p}_{Q} 
		\lesssim \vertiii{\ff}_{H^{1/2}(\Sigma)^{d}} + \vertiii{\bgg}_{H^{-1/2}(\Sigma)^{d}}
		=:\norm{(\ff,\bgg)}_{1/2,\Sigma}.
	\] 
\end{lemma}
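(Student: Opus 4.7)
The plan is to cast the variational problem \eqref{eq:ccbm_weak_form} as a sesquilinear mixed/saddle-point system and invoke the complex version of the Babu\v{s}ka--Brezzi theorem. Three ingredients need to be verified: (i) continuity of the sesquilinear form $\aaa$ on $\Vgamma \times \Vgamma$, of the bilinear form $b$ on $\Vgamma \times Q$, and of the antilinear functional $F$ on $\Vgamma$; (ii) coercivity of $\aaa$ on $\Vgamma$ (which is stronger than what is needed, namely coercivity on $\ker b$); and (iii) the Ladyzhenskaya--Babu\v{s}ka--Brezzi inf--sup condition for $b$.

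Continuity is routine. For $\aaa$, Cauchy--Schwarz handles the volume term, while the boundary term is controlled using the continuity of the trace $\HH^{1}(\Omega)^{d}\hookrightarrow \LL^{2}(\Sigma)^{d}$. Continuity of $b$ is immediate from Cauchy--Schwarz, and that of $F$ follows from the trace theorem combined with the duality pairing $H^{-1/2}(\Sigma)^{d} \times H^{1/2}(\Sigma)^{d}$. For coercivity, I extract the real part,
\[
\operatorname{Re}\aaa(\bphi,\bphi) \;=\; \alpha \intO{|\nabla \bphi|^{2}},
\]
and apply Poincar\'e's inequality on $\Vgamma$, which is valid because members of $\Vgamma$ vanish on $\Gamma$ (a boundary portion with positive $(d-1)$-Hausdorff measure). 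Hence $|\aaa(\bphi,\bphi)| \ge \operatorname{Re}\aaa(\bphi,\bphi) \gtrsim \vertiii{\bphi}_{X}^{2}$.

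The inf--sup condition for $b$ requires more attention, since $Q=\LL^{2}(\Omega)$ carries no zero-mean constraint (in contrast to $\oQ$, which would be forced by pure Dirichlet boundary data). The key point is that members of $\Vgamma$ are unrestricted on $\Sigma$: given any $\lambda \in Q$, I construct $\bphi \in \Vgamma$ satisfying $\dive \bphi = \overline{\lambda}$ and $\vertiii{\bphi}_{X} \lesssim \vertiii{\lambda}_{Q}$. Such a bounded right-inverse of the divergence can be obtained, e.g., by solving an auxiliary Stokes-type problem on $\Omega$ carrying a homogeneous Dirichlet condition on $\Gamma$ and a natural (traction) condition on $\Sigma$, and then invoking the de Rham/Ne\v{c}as argument. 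This yields
\[
\sup_{\bphi \in \Vgamma \setminus \{\vect{0}\}} \frac{|b(\bphi,\lambda)|}{\vertiii{\bphi}_{X}} \;\gtrsim\; \vertiii{\lambda}_{Q}.
\]

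With (i)--(iii) in place, the Babu\v{s}ka--Brezzi theorem produces a unique $(\uu,p) \in \Vgamma \times Q$ solving \eqref{eq:ccbm_weak_form} together with the a priori estimate $\vertiii{\uu}_{X} + \vertiii{p}_{Q} \lesssim \|F\|_{\Vgamma'} \lesssim \|(\ff,\bgg)\|_{1/2,\Sigma}$, which is exactly the claimed continuous dependence. The main technical obstacle is step (iii): establishing the inf--sup without pressure normalization. This is precisely where the mixed Robin/Dirichlet structure of the CCBM formulation is crucial, as it breaks the constants-in-pressure degeneracy that would otherwise force one to work in $\oQ$ rather than $Q$.
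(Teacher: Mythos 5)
Your proof is correct and follows essentially the same route the paper sketches: continuity of the forms, coercivity of $\aaa$ on $\Vgamma$ obtained from $\Re\,\aaa(\bphi,\bphi)=\alpha\intO{|\nabla\bphi|^{2}}$ plus Poincar\'e's inequality (valid since members of $\Vgamma$ vanish on $\Gamma$), boundedness of $F$ via the trace theorem and the $H^{-1/2}\times H^{1/2}$ duality, and the inf--sup condition \eqref{eq:inf_sup_nts} for $b$ on $\Vgamma\times Q$, all fed into the complex Babu\v{s}ka--Brezzi theory. The only difference is one of detail: the paper delegates these verifications to a citation, whereas you spell them out — in particular the key observation that $\operatorname{div}$ maps $\Vgamma$ onto all of $L^{2}(\Omega)$ (not just the zero-mean subspace) because test functions are unconstrained on $\Sigma$, which is exactly why no pressure normalization is required here.
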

The above claim can be validated through standard steps (cf. \cite[Appx. A]{RabagoNotsu2024}).
In fact, it follows from the continuity of the sesquilinear form $a(\cdot,\cdot)$ on $\Vgamma \times \Vgamma $, its coercivity on $\Vgamma \subset X$ (i.e., there is a constant $c_{a}>0$ such that the relaxed $\Vgamma$-ellipticity assumption $\Re(a(\bphi,\bphi)) \geqslant c_{a} \vertiii{\bphi}_{X}$, for all $\bphi \in {\Vgamma}$), the boundedness of $F$, and on an inf-sup condition; that is, there exists a constant $\beta_{0} > 0$ such that
\begin{equation}\label{eq:inf_sup_nts}
	\inf_{\substack{{\lambda} \in {Q}\\ {\lambda}\neq0}} \sup_{\substack{\bphi \in {\Vgamma}\\ \bphi \neq \vect{0}}} \frac{b(\bphi,{\lambda})}{\vertiii{\bphi}_{X} \vertiii{{\lambda}}_{Q} } \geqslant \beta_{0}.
\end{equation}

%----------------------------------------------------------------------------------------------------------------------------------------------------------
\subsection{The shape optimization problem}
We now present the solution method we use to resolve the shape inverse problem \ref{prob:optimization_problem}.
To this end, we introduce the cost functional 
\begin{equation}\label{eq:ccbm_cost_function}
	J(\Omega):=J(D\setminus \overline{\omega})
	=\frac{1}{2} \Big\{\big\Vert {\imagu} \big\Vert_{L^{2}(\Omega)^{d}}^{2}+\big\Vert {\imagp} \big\Vert_{L^{2}(\Omega)}^{2}\Big\}
\end{equation}
and consider the following shape optimization problem:
\begin{problem}\label{prob:shape_optimization_problem}
Find $\omega^{\ast} \in \mathcal{O}_{\delta}$ such that $\omega^{\ast} = \displaystyle \inf_{\omega \in \mathcal{O}_{\delta}} J(\Omega)$.
\end{problem} 
\begin{remark}
CCBM enables us to define the cost function $J$ across the entire domain $\Omega$, offering advantages in robustness for reconstruction, such as the Kohn-Vogelius method, when compared to the cost functionals of boundary-integral-least-squares fitting, namely $J_{N}$ and $J_{D}$.
\end{remark}
To find the shape derivative of the cost function \eqref{eq:ccbm_cost_function} with respect to $\omega$ (or equivalently, to $\Omega$), we can use the classical chain rule approach (see, e.g., \cite{DelfourZolesio2011, HenrotPierre2018}). This method relies on having the expression for the shape derivative of the state problem \eqref{eq:ccbm}. 
However, obtaining the shape derivative of the states requires higher regularity assumptions on the unknown variables, typically derived from the regularities imposed on the domain and on the boundary data. 
Our goal here is to avoid such unnecessary requirements for the first-order shape sensitivity of $J$.
Consequently, we establish the shape gradient of $J$ -- following the Hadamard-Zol\'{e}sio structure theorem \cite[p. 479]{DelfourZolesio2011} -- using only a mild $\mathcal{C}^{1,1}$ regularity assumption on the domain, combined with the assumption that $(\ff, \bgg) \in H^{3/2}(\Sigma)^{d} \times H^{1/2}(\Sigma)^{d}$. 
This task is achieved through the application of an alternative technique in computing shape derivatives of shape functionals known as the \textit{rearrangement method} \cite{IKP2006,IKP2008}. 
This approach bypasses the computation of the material and shape derivative of the states and utilizes only the H\"{o}lder continuity of the state variables to obtain the expression for the shape derivative of the cost.
It is worth noting that, despite its application in computing shape derivatives for various problems, such as shape optimization reformulations of free boundary problems \cite{IKP2006,HIKKP2009,BacaniPeichl2013,RabagoNotsu2024}, shape optimal problems \cite{IKP2008,KasumbaKunisch2011,KasumbaKunisch2014}, and shape design problems \cite{SimonNotsu2022a,SimonNotsu2022b}, there is, to the best of our knowledge, no documented demonstration of the method applied to shape inverse problems. 
This observation serves as a compelling motivation for applying this method to our present problem.
%
%
%
%
%
%%%%%%%%%%%%%%%%%%%%%%%%%%%%%%%%%%%%%%%%%%%%%%%%%%%%%%%%%%%%%%%%%%
%%%%%%%%%%%%%%%%%%%%%%%%%%%%%%%%%%%%%%%%%%%%%%%%%%%%%%%%%%%%%%%%%%
%%%%%%%%%%%%%%%%%%%%%%%%%%%%%%%%%%%%%%%%%%%%%%%%%%%%%%%%%%%%%%%%%%
%%%%%%%%%%%%%%%%%%%%%%%%%%%%%%%%%%%%%%%%%%%%%%%%%%%%%%%%%%%%%%%%%%
\section{First-order shape sensitivity analysis}\label{sec:Computation_of_the_shape_derivative}
To iteratively solve Problem \ref{prob:shape_optimization_problem} through a numerical procedure, our goal is to characterize the shape gradient of the cost function $J$, as enunciated in Theorem \ref{thm:shape_derivative}. 
To facilitate this, we will make necessary preparations by recalling some relevant concepts from shape calculus.
\subsection{Some concepts from shape calculus}
Let us define $T_{t}$ as the \textit{perturbation of the identity} $id$ given by the map
\begin{equation}\label{eq:poi}
	T_{t} = T_{t}({\VV}) = id + t \VV, \qquad (T_{t} : D \longmapsto D),
\end{equation}
where $\VV$ is a $t$-independent deformation field belonging to the admissible space
\begin{equation}
\label{eq:space_for_V}
	\sfTheta:=\{\VV = (\theta_{1}, \ldots, \theta_{d})^{\top} \in \mathcal{C}^{1,1}(\overline{D})^{d} \mid \operatorname{supp}{\VV} \subset {\overline{D}}_{\delta}\},
\end{equation}
where $\{x \in {D} \mid d(x,\partial{D}) > \delta/2\} \subset {D}_{\delta} \subset \{x \in {D} \mid d(x,\partial{D}) > \delta/3\}$.
Clearly, by definition, we have that $\Sigma_{t}:=T_{t}(\Sigma) \equiv \Sigma$ and $\Gamma_{t}:=T_{t}(\Gamma)$.
In addition, $\Omega_{0}=\Omega$ and $\Sigma_{0}=\Sigma$.

Hereinafter, we assume that $t > 0$ is sufficiently small such that $T_{t}$ is a diffeomorphism from $\Omega \in \mathcal{C}^{1,1}$ onto its image.
Specifically, we let $\varepsilon \in \mathbb{R}^{+}$ be small enough so that $[t \mapsto T_{t}] \in \mathcal{C}^{1}(\mathcal{I},\mathcal{C}^{1,1}(\overline{D})^{d})$, where $\mathcal{I} := [0,\varepsilon]$. 
Accordingly, we define the set of all admissible perturbations of $\Omega$ denoted by $\mathcal{O}_{ad}$ as follows:
\begin{equation}\label{eq:admissible_domains}
	\mathcal{O}_{ad} = \left\{T_{t}({\VV})(\overline{\Omega}) \subset D \mid \Omega = D\setminus{\overline{\omega}} \in \mathcal{C}^{1,1}, \omega \in \mathcal{O}_{\delta}, t \in \mathcal{I}, \VV \in \sfTheta \right\}. 
\end{equation}
Note that we do not necessarily require the fixed boundary $\Sigma = \partial\Omega$ to be $\mathcal{C}^{1,1}$ regular (it is sufficient for it to be Lipschitz), but for the sake of simplifying the discussion, we assume the stated regularity.

The functional $J : \mathcal{O}_{ad} \to \mathbb{R}$ has a directional \textit{first-order} \textit{Eulerian derivative} at $\Omega$ in the direction of the field $\VV \in \sfTheta$ if the limit $\lim_{t \searrow0} \frac{J(\Omega_{t}) - J(\Omega)}{t} =: {d}J(\Omega)[\VV]$ exists (see, e.g., \cite[Sec. 4.3.2, Eq. (3.6), p. 172]{DelfourZolesio2011}). 
The function $J$ is considered \textit{shape differentiable} at $\Omega$ in the direction of $\VV$ if the mapping $\VV \mapsto {d}J(\Omega)[\VV]$ is both linear and continuous. 
In such cases, we call it the \textit{shape gradient} of $J$.

We introduce a few more notations for ease of writing.
The Jacobian matrix of $T_{t}$, its inverse, and its inverse transpose are denoted respectively by $DT_{t}$, $(DT_{t})^{-1}$, and $(DT_{t})^{-\top} := ((DT_{t})^{\top})^{-1}$.
For convenience, we define
\[
	\dett := \det \, DT_{t},\quad
	A_{t} := \dett(DT_{t}^{-1})(DT_{t})^{-\top},\quad
	\text{and} \quad B_{t} := \dett \abs{\Mt \nn}, \quad \Mt:=(DT_{t})^{-\top},
\]
and we assume that, for all $t \in \mathcal{I}$, we have
\begin{equation}\label{eq:bounds_At_and_Bt}
	0 \leqslant \Lambda_{1} \leqslant \dett \leqslant \Lambda_{2}
	\qquad \text{and} \qquad 0 < \Lambda_{3}\abs{\xi}^{2} \leqslant A_{t}\xi \cdot \xi \leqslant \Lambda_{4}\abs{\xi}^{2},
\end{equation}
for all $\xi \in \mathbb{R}^{d}$, for some constants $\Lambda_{1}$, $\Lambda_{2}$, $\Lambda_{3}$, and $\Lambda_{4}$ ($\Lambda_{1} < \Lambda_{2}$, $\Lambda_{3} < \Lambda_{4}$).

At $t=0$, $I_{0} = 1$, $DT_{0}^{-1} = id$, $(DT_{0})^{-\top} = id$, $A_{0} = id$, and $B_{0} = 1$. 
The maps $t \mapsto (DT_{t})^{\pm 1}$, $t \mapsto \dett$, and $t \mapsto A_{t}$ are differentiable (see \eqref{eq:regular_maps}), and their respective derivative are given as follows:
%%%\begin{equation}\label{eq:limits_of_maps}
%%%\begin{aligned}
%%%	\frac{d}{dt}(DT_{t})^{\pm 1} \big\rvert_{t=0} 
%%% 		&=\lim_{t \to 0} \dfrac{(DT_{t})^{\pm 1} - id}{t} =\pm D\VV,\\
%%%	\frac{d}{dt}\dett \big\rvert_{t=0}
%%%		&= \lim_{t\to 0} \frac{\dett - 1}{t} = \dive \VV,\\
%%%	\quad \frac{d}{dt}A_{t} \big\rvert_{t=0}
%%%		& = \lim_{t\to 0} \frac{A_{t} - id}{t}
%%%		= (\dive \VV){id} -  {D} \VV - ({D} \VV)^\top =: A.
%%%%%%	\\\quad\frac{d}{dt}B_{t} \big\rvert_{t=0}
%%%%%%		& =  \lim_{t\to 0} \frac{B_{t} - 1}{t}
%%%%%%		= \dive_{\Sigma} \VV
%%%%%%		= \dive \VV \big\rvert_{\Sigma} - ({D} \VV\nn)\cdot\nn.
%%%\end{aligned}
%%%\end{equation}
\begin{equation}\label{eq:limits_of_maps}
\begin{aligned}
	\frac{d}{dt}(DT_{t})^{\pm 1} \big\rvert_{t=0} 
 		&=\lim_{t \to 0} \dfrac{(DT_{t})^{\pm 1} - id}{t} =\pm D\VV,\qquad\qquad
	\frac{d}{dt}\dett \big\rvert_{t=0}
		= \lim_{t\to 0} \frac{\dett - 1}{t} = \dive \VV,\\
	\quad \frac{d}{dt}A_{t} \big\rvert_{t=0}
		& = \lim_{t\to 0} \frac{A_{t} - id}{t}
		= (\dive \VV){id} -  {D} \VV - ({D} \VV)^\top =: A.
%%%	\\\quad\frac{d}{dt}B_{t} \big\rvert_{t=0}
%%%		& =  \lim_{t\to 0} \frac{B_{t} - 1}{t}
%%%		= \dive_{\Sigma} \VV
%%%		= \dive \VV \big\rvert_{\Sigma} - ({D} \VV\nn)\cdot\nn.
\end{aligned}
\end{equation}
%
%
%%%In the above, ${\operatorname{div}}_{\Sigma} \VV$ represents the tangential divergence of the vector $\VV$ on $\Sigma$. 
We also introduce the following notations for economy of space:
%------------------------------------------------------------------------------------------------------------------------------------------------------------
\begin{equation}\label{eq:special_notations}
	\mathfrak{i}_{t} := I_{t} - 1,
	\qquad \mathfrak{m}_{t} := M_{t}^{\top} - id,
	\qquad \mathfrak{a}_{t} := A_{t} - id,
	\qquad \text{and} \quad \Vn := \VV \cdot \nn. 
\end{equation}
%------------------------------------------------------------------------------------------------------------------------------------------------------------	
%
% 
%

Since we intend to map a function $\bphi_{t} : \Omega_{t} \to D \subset \mathbb{R}^{d}$ to a reference domain using the transformation $T_{t}$, we will utilize the following notation: 
\[
	\bphi^{t}:=\bphi_{t} \circ T_{t}:\Omega \to D \subset \mathbb{R}^{d}.
\]

To conclude this review and preparation section, we present some results regarding the limits of functions composed with the map $T_{t}$. 

For a vector-valued function $\bphi = (\varphi_{1}, \ldots, \varphi_{d})^{\top}\in H^{2}(D)^{d}$, we have
%
%
%------------------------------------------------------------------------------------------------------------------------------------------------------------
\begin{equation}\label{eq:convergence_of_vector_valued_functions_1}
	\lim_{t \to 0} \|\bphi \circ T_{t} - \bphi\|_{H^{1}(D)^{d}} 
	= \lim_{t \to 0} \left( \sum_{i=1}^{d} \|\varphi_{i} \circ T_{t} - \varphi_{i} \|_{H^{1}(D)} \right)^{1/2} 
	= 0.
\end{equation}
%------------------------------------------------------------------------------------------------------------------------------------------------------------
%
%
Moreover, the maps $t \mapsto \bphi \circ T_{t}$ from $\mathcal{I} \to H^{1}(\Omega)^{d}$ and $t \to \dett \bphi \circ T_{t}$ from $\mathcal{I}$ to $L^{2}(\Omega)^{d}$ are differentiable at $t=0$, and we have
\begin{equation}\label{eq:convergence_of_vector_valued_functions_3}
				\lim_{t \to 0} \frac{1}{t} \left( \bphi\circ T_{t} - \bphi\right) = D\bphi\VV
				\quad\text{and}\quad
				\lim_{t \to 0} \frac{1}{t} \left( \dett\bphi\circ T_{t} - \bphi\right) = \vect{\nabla} \cdot(\bphi \otimes \VV),
\end{equation}
where $\vect{\nabla} \cdot(\bphi \otimes \VV) = ( \operatorname{div}(\varphi_{1}\VV), \operatorname{div}(\varphi_{2}\VV), \ldots, \operatorname{div}(\varphi_{d}\VV) )^{\top}$, and $\otimes$ denotes the outer product; i.e., $\bphi \otimes \VV=\bphi \VV^{\top} = (\varphi_{j} \theta_{k})_{jk}$ where $j,k = 1, \ldots, d$.

The proofs of the above results are omitted since they are standard and follow similar arguments used for the case of scalar functions, see, e.g., \cite{DelfourZolesio2011,IKP2006}.
%%%%%%%%%%%%%%%%%%%%%%%%%%%%%%%%%%%%%%%%%%%%%%%%%%%%%%%%%%%%%%%%%%
%%%%%%%%%%%%%%%%%%%%%%%%%%%%%%%%%%%%%%%%%%%%%%%%%%%%%%%%%%%%%%%%%%
\subsection{The shape gradient of $J$}
Our main result in this section is the expression for the shape gradient of $J$, which we present in the following theorem.
%%% ---------------------------------------------------------------------------
%	SHAPE DERIVATIVE VIA EULERIAN DERIVATIVE
%%% ---------------------------------------------------------------------------
%
\begin{theorem}
	\label{thm:shape_derivative} 
	Let $(\ff, \bgg) \in H^{3/2}(\Sigma)^{d} \times H^{1/2}(\Sigma)^{d}$ be a given Cauchy pair, %%%$\Omega = D\setminus\overline{\omega}$ be a $\mathcal{C}^{1,1}$ annular domain such that $\omega \in \mathcal{O}_{\delta}$, 
	$\Omega$ is an admissible domain, and $\VV \in \sfTheta$.
	Then, $J$ is shape differentiable with respect to $\Omega$ in the direction of the deformation field $\VV$. 
	Its shape derivative is given by
	\begin{equation}\label{eq:shape_gradient}
	\begin{aligned}
	{d}J(\Omega)[\VV] 
	&= \lrangle{\ggb\nn}{\VV}_{\Gamma}
	= \intG{\left( \Im\left\{ \overline{\sigma(\vv, q)\nn} \cdot \dn{\uu} \right\} + \frac12 |\pim|^{2} \right)\nn \cdot \VV},
	\end{aligned}
	\end{equation}
	where $\Im$ denotes the imaginary part, $(\uu, p)$ is the unique pair of solution to \eqref{eq:ccbm}, while the pair of adjoints $(\vv,q) = (\vr + i \vi, q_{r} + i q_{i})$ uniquely solves the adjoint system
    \begin{equation}
    \label{eq:adjoint_system}
        	- {\alpha} \Delta \vv + \nabla q	= \ui			\ \text{in $\Omega$},\quad
    	-\nabla \cdot \vv			= \pim 		\ \text{in $\Omega$},\quad
    	\vv	 					= \vect{0}		\ \text{on $\Gamma$},\quad
    	\sigma(\vv,q)\nn - i \vv		= \vect{0}		\ \text{on $\Sigma$},
%%%%    \left\{\arraycolsep=1.4pt\def\arraystretch{1.1}
%%%%    \begin{array}{rcll}
%%%%    	- {\alpha} \Delta \vv + \nabla q	&=& \ui			& \ \text{in $\Omega$},\\
%%%%    	-\nabla \cdot \vv			&=& \pim 		& \ \text{in $\Omega$},\\
%%%%    	\vv	 					&=& \vect{0}			& \ \text{on $\Gamma$},\\
%%%%    	\sigma(\vv,q)\nn - i \vv		&=& \vect{0}		& \ \text{on $\Sigma$},
%%%%    \end{array}
%%%%    \right.
    \end{equation}	 
    with compatibility condition $\langle \vv, \nn \rangle_{\Sigma} = -(\pim, 1)_{\Omega}$.
\end{theorem}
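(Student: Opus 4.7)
The plan is to apply the rearrangement method of \cite{IKP2006,IKP2008}, which sidesteps the computation of material or shape derivatives of the state $(\uu, p)$ and instead relies only on the H\"{o}lder continuity of the states established in Lemma \ref{lem:holder_continuity}. The key idea is that the adjoint variables $(\vv, q)$ from \eqref{eq:adjoint_system} are tailored so that, when the state equation is tested against $(\vv, q)$ and the adjoint equation is tested against the difference of the transported and reference states, the ``unknown'' variation of the state drops out and one is left only with volume expressions involving $\VV$ and $D\VV$ whose $t \searrow 0$ limits are controlled by \eqref{eq:limits_of_maps}--\eqref{eq:convergence_of_vector_valued_functions_3}.

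I would begin by pulling the problem on $\Omega_{t} = T_{t}(\Omega)$ back to $\Omega$ through $T_{t}$. Setting $\ut := \uu_{t} \circ T_{t}$ and $p^{t} := p_{t} \circ T_{t}$, the perturbed cost becomes
\[
J(\Omega_{t}) = \tfrac{1}{2}\int_{\Omega}\bigl(|\tui|^{2} + |\tpim|^{2}\bigr) I_{t}\,dx,
\]
and the transported state $(\ut, p^{t})$ satisfies a variational system of the form $a_{t}(\ut, \bphi) + b_{t}(\bphi, p^{t}) = F_{t}(\bphi)$, $b_{t}(\ut, \lambda) = 0$, for all $(\bphi, \lambda) \in \Vgamma \times Q$, where $a_{t}, b_{t}, F_{t}$ are obtained from \eqref{eq:forms_for_the_state_problem} by the usual change of variables and carry the matrices $A_{t}, M_{t}^{\top}$ and the scalars $I_{t}, B_{t}$.

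Next, I would form $J(\Omega_{t}) - J(\Omega)$ and split it into a purely geometric part (factors of $I_{t} - 1$) and a state-variation part containing $\ut - \uu$ and $p^{t} - p$. The latter is eliminated by the adjoint trick: testing the difference of the $t$-indexed state equation with $(\vv, q)$ and the adjoint equation with $(\ut - \uu, p^{t} - p)$ leaves, up to a cancellation, only expressions of the type $(a - a_{t})(\ut, \vv)$, $(b - b_{t})(\ut, q)$, $(b - b_{t})(\vv, p^{t})$, and $(F_{t} - F)(\vv)$. Each of these admits a clean first-order expansion in $t$ through \eqref{eq:limits_of_maps}. Dividing by $t$ and letting $t \searrow 0$, the limits \eqref{eq:limits_of_maps}--\eqref{eq:convergence_of_vector_valued_functions_3} take care of the coefficient differences, while Lemma \ref{lem:holder_continuity} ensures $\vertiii{\ut - \uu}_{X} + \vertiii{p^{t} - p}_{Q} = o(1)$, which absorbs every cross term in which this small factor multiplies an $O(t)$ coefficient change. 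The outcome is a distributed expression of the form $\int_{\Omega} \mathcal{G}(\uu, p, \vv, q) : A\,dx + \cdots$ involving $\VV$ and $D\VV$ through the matrix $A$ defined in \eqref{eq:limits_of_maps}.

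Finally, to recast this distributed expression into the Hadamard boundary form announced in \eqref{eq:shape_gradient}, I would integrate by parts and invoke the strong forms \eqref{eq:ccbm} and \eqref{eq:adjoint_system}. Since $\operatorname{supp} \VV \subset \overline{D}_{\delta}$ lies away from $\Sigma$, every contribution on $\Sigma$ drops out. On $\Gamma$, the vanishing of $\uu$ and $\vv$ forces all their tangential derivatives to vanish as well, so that $\nabla \uu = \dn{\uu} \otimes \nn$ and $\nabla \vv = \dn{\vv} \otimes \nn$ there; these identities, combined with $-\dive \vv = \pim$, collapse the surface integrand to $\bigl(\Im\{\overline{\sigma(\vv, q)\nn}\cdot\dn{\uu}\} + \tfrac{1}{2}|\pim|^{2}\bigr)\Vn$. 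The main obstacle will be the careful bookkeeping of the complex-valued terms — real and imaginary parts couple both through the Robin condition on $\Sigma$ and through the cost/adjoint coupling on $\Omega$ — and verifying that the H\"{o}lder exponent supplied by Lemma \ref{lem:holder_continuity} is indeed strong enough to annihilate every cross term in the limit, which is precisely what licenses avoiding any material or shape derivative of $(\uu, p)$.
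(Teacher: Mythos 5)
Your proposal is correct and follows essentially the same route as the paper: pull back to the reference domain, split the increment into geometric terms and state-variation terms, eliminate the latter with the adjoint system so that only coefficient differences $A_t - id$, $I_t - 1$, $M_t^\top - id$ survive, pass to the limit using the H\"older continuity of Lemma \ref{lem:holder_continuity}, and then convert the distributed expression to the boundary form via integration by parts and the vanishing of $\uu$ and $\vv$ on $\Gamma$. The only point to be careful about, which you flag yourself at the end, is that the quadratic terms $\tfrac12\int_\Omega|\uu_i^t-\uu_i|^2$ and $\tfrac12\int_\Omega|p_i^t-p_i|^2$ require the full $o(\sqrt{t})$ rate of Lemma \ref{lem:holder_continuity} (not merely $o(1)$) to vanish after division by $t$.
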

We emphasize here that if, instead, $\Omega$ is of class $\mathcal{C}^{2,1}$, $(\mathbf{f}, \bgg) \in H^{5/2}(\Sigma)^{d} \times H^{3/2}(\Sigma)^{d}$, and $\VV$ is a $\mathcal{C}^{2,1}$ smooth deformation field, then the existence of the material and shape derivatives of the states is guaranteed.
Because then we will have  $\ur$, $\ui \in H^{3}(\Omega)^{d}$ and $p_{r}$, $\pim \in H^{2}(\Omega)$.
With this regularity, the shape gradient of the cost can be computed with ease using Hadamard's domain differentiation formula
(see, e.g., \cite[Thm. 4.2, p. 483]{DelfourZolesio2011}), \cite[eq. (5.12), Thm. 5.2.2, p. 194]{HenrotPierre2018} or \cite[eq. (2.168), p. 113]{SokolowskiZolesio1992}):
\begin{equation}
\begin{aligned}
	\left. \left\{ \frac{{d}}{{d}t} \intOt{f(t,x)} \right\} \right|_{t=0}
		= \intO{\frac{\partial }{\partial t} f(0,x)}
			+ \intdO{ f(0,\sigma) \Vn} \label{eq:Hadamard_domain_formula}
\end{aligned}
\end{equation} 
Before we provide a rigorous proof of Theorem \ref{thm:shape_derivative} , let us exhibit shortly the shape gradient of $J$ via the chain rule approach.
Let us assume, for now, that $\Omega$ is an admissible domain of class $\mathcal{C}^{2,1}$, and $\VV\in \mathcal{C}^{2,1}(\overline{D})^{d}$ such that $\operatorname{supp}{\VV} \subset {\overline{D}}_{\delta}$, and that $(\ff, \bgg) \in H^{5/2}(\Sigma)^{d} \times H^{3/2}(\Sigma)^{d}$.
Then, we can apply formula \eqref{eq:Hadamard_domain_formula} to obtain -- noting that $\VV= \vect{0}$ on $\Sigma$ and $\uu = \vect{0}$ on $\Gamma$ -- the derivative
	\begin{equation}
	\label{eq:computed_first_derivative_via_Hadamard_formula}
	\begin{aligned}
		{{d}}J(\Omega)[\VV]   
		= \intO{\left( \ui \cdot \ui'  + \pim \pim' \right) } + \frac12 \intG{|\pim|^{2} \Vn}
		=: L_{1} + L_{2}.
	\end{aligned}
	\end{equation}
Here, the shape derivative $(\uup,p^{\prime})$ of the states $(\uu,p)$ is characterized by the PDE system
\begin{equation}
\label{eq:shape_derivative_of_the_state_system}
	- {\alpha} \Delta \uup + \nabla p^{\prime}	= \vect{0}		\ \text{in $\Omega$},\quad
	\nabla \cdot \uup				= \vect{0} 			\ \text{in $\Omega$},\quad
	\sigma(\uup,p^{\prime}) + i \uup		= \vect{0}			\ \text{on $\Sigma$},	\quad
	\uup	 						= - \dn{\uu}\Vn		\ \text{on $\Gamma$}.
%%\left\{\arraycolsep=1.4pt\def\arraystretch{1.1}
%%\begin{array}{rcll}
%%	- {\alpha} \Delta \uup + \nabla p^{\prime}	&=& \vect{0}			& \ \text{in $\Omega$},\\
%%	\nabla \cdot \uup				&=& \vect{0} 			& \ \text{in $\Omega$},\\
%%	\sigma(\uup,p^{\prime}) + i \uup		&=& \vect{0}	& \ \text{on $\Sigma$},	\\
%%	\uup	 						&=& - \dn{\uu}\Vn		& \ \text{on $\Gamma$}.
%%\end{array}
%%\right.
\end{equation}
We can confirm this claim using standard techniques from shape calculus \cite{DelfourZolesio2011,HenrotPierre2018}, with arguments similar to the proof in \cite[Thm. C.1]{RabagoNotsu2024}, so we omit it.

We focus on eliminating the derivatives in the first integral $L_{1}$.
To do this, we make use of the adjoint method, utilizing the adjoint system \eqref{eq:adjoint_system}.

Let us multiply by $\overline{\uu}'$ equation \eqref{eq:adjoint_system}, apply integration by parts (IBP), and then use the boundary conditions in \eqref{eq:adjoint_system} to obtain
\[
	\intO{{\alpha} \nabla{\vv} : \nabla{\overline{\uu}'}} - i \intS{\vv\cdot \overline{\uu}'} - \intG{ ({\alpha} \dn{\vv} - q\nn) \cdot \overline{\uu}' } = \intO{\ui \cdot \overline{\uu}'}.
\]
Taking the complex conjugate on both sides of the above equation, we get (using the sesquilinear form in \eqref{eq:forms_for_the_state_problem}) 
$
	a(\uup,\vv) - \intG{ \overline{({\alpha} \dn{\vv} - q\nn)} \cdot \uup } = \intO{\ui \cdot \uup}.
$
Meanwhile, multiplying equation \eqref{eq:shape_derivative_of_the_state_system} by $\overline{\vv}$ and then employing IBP, while taking into account the boundary conditions in \eqref{eq:shape_derivative_of_the_state_system}, we get
$
	a(\uup,\vv) = -\intO{\pim p^{\prime}}.
$
Subtracting this equation from the previous one, we see that
\begin{align*}
%%	\intO{\left( \ui \cdot \ui'  + \pim \pim' \right) }
	L_{1} = \Im\left\{ \intO{\left( \uu \cdot \ui'  + p \pim' \right) } \right\} 
	&= \Im\left\{ \intG{ \overline{\sigma(\vv,q)\nn} \cdot \dn{\uu}\Vn } \right\},
\end{align*}
from which, after combining with $L_{2}$, we recover the expression in \eqref{eq:shape_gradient}.

Our next objective is to demonstrate that applying the rearrangement method yields the same expression, with the only requirement being mild $\mathcal{C}^{1,1}$ regularity assumptions on $\Omega$, with perturbations in $\mathcal{O}_{ad}$, and regularity conditions $(\mathbf{f}, \bgg) \in H^{3/2}(\Sigma)^{d} \times H^{1/2}(\Sigma)^{d}$.
\begin{remark}
The conclusion in Theorem \ref{thm:shape_derivative} holds even when $(\mathbf{f}, \bgg) \in H^{1/2}(\Sigma)^{d} \times H^{-1/2}(\Sigma)^{d}$. 
In this case, the expression of $\lrangle{\ggb}{\Vn}_{\Gamma}$ has to be seen as a duality product $H^{1/2} \times H^{-1/2}$.
\end{remark}
%%%%%%%%%%%%%%%%%%%%%%%%%%%%%%%%%%%%%%%%%%%%%%%%%%%%%%%%%%%%%%%%%%
%%%%%%%%%%%%%%%%%%%%%%%%%%%%%%%%%%%%%%%%%%%%%%%%%%%%%%%%%%%%%%%%%%
\subsection{Computation of the shape gradient: Proof of Theorem \ref{thm:shape_derivative} }
\label{subsec:shape_derivative_of_the_cost_using_{t}he_Eulerian_derivatives}
We are now in the position to compute the shape derivative of $J$ rigorously via rearrangement method given that $(\ff, \bgg) \in H^{3/2}(\Sigma)^{d} \times H^{1/2}(\Sigma)^{d}$, $\Omega = D\setminus\overline{\omega}$ of class $\mathcal{C}^{1,1}$ such that $\omega \in \mathcal{O}_{\delta}$, and $\VV \in \sfTheta$.
The discussion then lays out the proof of Theorem \ref{thm:shape_derivative}, closely following \cite{RabagoNotsu2024}.

We start with the weak formulation of \eqref{eq:adjoint_system} which is given as follows:	
        \begin{equation}\label{eq:adjoint_system_weak_form}
	\text{find \quad$({\vv},q) \in \Vgamma \times Q$\quad such that \quad}
        \left\{\arraycolsep=1.4pt\def\arraystretch{1.1}
        \begin{array}{rcll}
        	\tilde{\aaa}({\vv},{\bpsi}) + b(\bpsi,q)	&=& \tilde{F}(\bpsi),	& \ \forall {\bpsi}\in {\Vgamma},\\ 
        				b(\vv,\mu)	&=& (\mu, \pim),			& \ \forall \mu \in {Q},
        \end{array}
        \right.
        \end{equation}
    where
    \begin{equation}\label{eq:forms_for_the_adjoint_problems}
    \left\{
    	\begin{aligned}
    		\tilde{\aaa}({\bphi},{\bpsi}) &= \intO{{\alpha} \nabla {\bphi} : \nabla {\cbpsi}} - i \intS{{\bphi} \cdot {\cbpsi}}, \quad \text{with}\ {\bphi}, {\bpsi} \in {\Vgamma},\\ 
    		\tilde{F}(\bpsi) &= \intO{\ui\cdot \cbpsi},  \quad \text{with $\ui \in H^{1}(\Omega)^{d}$ and ${\bpsi}\in {\Vgamma}$}.
     	\end{aligned}
    \right.
    \end{equation}
    %
%%%%    In above definition of $\tilde{F}$, of course, $\ui$ is the imaginary part of the state velocity $\uu$.
    % 
The demonstration of the well-posedness of \eqref{eq:adjoint_system_weak_form} follows a standard procedure. It mirrors the proof outlined in Proposition \ref{prop:well_posedness_of_CCBM}, anchoring on the distinct solvability of \eqref{eq:ccbm_weak_form} within the space $\Vgamma \times Q$. 
Notably, it can be established that the variational problem satisfies an inf-sup condition.

%
%
%
%
%%% ADDED/REVISED STATEMENT

At this juncture, we note that for $\Omega$ of class $\mathcal{C}^{k+1,1}$, where $k \in \mathbb{N}_{0}$, and assuming $(\ff, \bgg) \in H^{k+3/2}(\Sigma)^{d} \times H^{k+1/2}(\Sigma)^{d}$, it can be demonstrated that the weak solution $(\uu, p) \in \Vgamma \times Q$ of the variational problem \eqref{eq:ccbm_weak_form} also belongs to $\HH^{k+2}(\Omega)^{d} \times \HH^{k+1}(\Omega)$ -- specifically, $\ur, \ui \in H^{k+2}(\Omega)^{d} \cap H^{1}_{\Gamma,0}(\Omega)^{d}$ and $\pr, \pim \in H^{k+1}(\Omega)$. Consequently, the weak solution $(\vv,q)$ of problem \eqref{eq:adjoint_system_weak_form} is not only in $\Vgamma \times Q$ but also lies within $\HH^{k+2}(\Omega)^{d} \times \HH^{k+1}(\Omega)$.

Next, we introduce the sesquilinear forms $\aat, \taat \in \Vgamma \times \Vgamma \to \mathbb{R}$ and linear forms, $b^{t} : \Vgamma \times Q \to \mathbb{R}$ and $F^{t}, \tilde{F}^{t} : \Vgamma \to \mathbb{R}$:
%
%
%%%\begin{equation}\label{eq:transformed_forms}
%%%\left\{ 
%%%\begin{aligned}
%%%		\aat({\bphi},{\bpsi}) &= \intO{{\alpha} A_{t} \nabla {\bphi} : \nabla {\cbpsi}} + i \intS{ {\bphi} \cdot  {\cbpsi}},\\
%%%		%
%%%		b^{t}(\bphi,\lambda) &= -\intO{ \dett \bar{\lambda} ( \Mt^{\top}: \nabla \bphi) }, \\
%%%		%
%%%		F^{t}(\bphi) &= \intS{B_{t} (\bgg^{t} + i \ff^{t} ) \cdot \cbphi} \equiv F(\bphi) = \intS{(\bgg + i \ff ) \cdot \cbphi},\\%
%%%		\taat({\bphi},{\bpsi}) &= \intO{{\alpha} A_{t} \nabla {\bphi} : \nabla {\cbpsi}} - i \intS{ {\bphi} \cdot  {\cbpsi}},\\
%%%		%
%%%		\tilde{F}^{t}(\bphi) &= \intO{\dett \ui \cdot \cbphi},
%%%\end{aligned}
%%%\right.
%%%\end{equation}
\begin{equation}\label{eq:transformed_forms}
\left\{ 
\begin{aligned}
		\aat({\bphi},{\bpsi}) &= \intO{{\alpha} A_{t} \nabla {\bphi} : \nabla {\cbpsi}} + i \intS{ {\bphi} \cdot  {\cbpsi}},\qquad
		b^{t}(\bphi,\lambda) = -\intO{ \dett \bar{\lambda} ( \Mt^{\top}: \nabla \bphi) }, \\
		F^{t}(\bphi) &= \intS{B_{t} (\bgg^{t} + i \ff^{t} ) \cdot \cbphi} \equiv F(\bphi) = \intS{(\bgg + i \ff ) \cdot \cbphi},\\%
		\taat({\bphi},{\bpsi}) &= \intO{{\alpha} A_{t} \nabla {\bphi} : \nabla {\cbpsi}} - i \intS{ {\bphi} \cdot  {\cbpsi}},\qquad
		\tilde{F}^{t}(\bphi) = \intO{\dett \ui \cdot \cbphi},
\end{aligned}
\right.
\end{equation}
where $\ui$ is the imaginary part of the velocity solution to the state system \eqref{eq:ccbm}.

In the above, notice that $F^{t}(\bphi) = F(\bphi)$ holds true for any ${\bphi}\in {\Vgamma}$ because $\Sigma_{t} \equiv \Sigma$ for all $t > 0$, and thus $T_{t}(x) = x$ for all $x \in \Sigma_{t}$, for all $t > 0$.
This is clear from the fact that $B_{t} = 1$ on $\Sigma$.
As a consequence, we have $\ff^{t} = \ff$ and $\bgg^{t} = \bgg$ on $\Sigma$.
Further below, we will use the given notations to express the respective variational formulations of the transformed versions of the state and adjoint state systems \eqref{eq:ccbm} and \eqref{eq:adjoint_system}.

Before deriving the shape gradient of $J$, we need to establish some key lemmas.
\begin{lemma}\label{lem:boundedness_of_sesquiliner_and_linear_forms}
	For $t \in \mathcal{I}$, the sesquilinear forms $\aat$ and $\taat$ defined on $X \times X$ are bounded and coercive on $X \times X$.
	In addition, the linear forms $F^{t}(\bphi)$ and $\tilde{F}^{t}(\bphi)$ are also bounded.
	Moreover, the bilinear form $b$ satisfies the condition that there is a constant $\beta_{1} > 0$ such that
	\begin{equation}
	\label{eq:inf_sup_condition_for_transformed_problem}
		\inf_{\substack{{\lambda} \in {Q}\\ {\lambda}\neq0}} \sup_{\substack{\bphi \in {\Vgamma}\\ \bphi \neq \vect{0}}} \frac{b^{t}(\bphi,{\lambda})}	{\vertiii{\bphi}_{X} \vertiii{{\lambda}}_{Q} } \geqslant \beta_{1}.
	\end{equation}

\end{lemma}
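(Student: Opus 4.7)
The plan is to lift the four claims about the transformed forms from the corresponding (known) facts about the untransformed ones by exploiting the uniform bounds \eqref{eq:bounds_At_and_Bt} on $A_{t}$, $I_{t}$, $M_{t}$, the identity $T_{t} = id$ on $\Sigma$ (which gives $B_{t} \equiv 1$, $\ff^{t} = \ff$, $\bgg^{t} = \bgg$ on $\Sigma$), and the uniform convergence $I_{t} M_{t}^{\top} \to id$ in $L^{\infty}(\Omega)^{d\times d}$ as $t \to 0$, the latter following from $[t \mapsto T_{t}] \in \mathcal{C}^{1}(\mathcal{I}, \mathcal{C}^{1,1}(\overline{D})^{d})$ together with \eqref{eq:limits_of_maps}. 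Three of the four items are straightforward; the real work lies in the inf-sup bound for $b^{t}$.

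For the sesquilinear forms, boundedness of $\aat$ (and likewise $\taat$) follows by applying Cauchy--Schwarz to $\intO{\alpha A_{t} \nabla \bphi : \nabla \cbpsi}$ via the pointwise estimate $|A_{t} \xi \cdot \eta| \leqslant \Lambda_{4} |\xi||\eta|$, combined with the trace inequality for the $\Sigma$-boundary term. Coercivity on $\Vgamma$ is obtained by taking real parts: the boundary integral $i \intS{\bphi \cdot \cbphi}$ is purely imaginary, so
\[
\Re(\aat(\bphi,\bphi)) = \intO{\alpha A_{t} \nabla \bphi : \nabla \cbphi} \geqslant \alpha \Lambda_{3} \|\nabla \bphi\|_{L^{2}(\Omega)^{d\times d}}^{2};
\]
since $\bphi \in \Vgamma$ vanishes on $\Gamma$, Poincar\'{e}'s inequality upgrades this to $\Re(\aat(\bphi,\bphi)) \geqslant c_{a} \vertiii{\bphi}_{X}^{2}$ with $c_{a} > 0$ independent of $t$. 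The identical computation, with sign change only in the imaginary part, handles $\taat$. For the linear forms, boundedness of $F^{t}$ reduces to that of $F$ because $B_{t} \equiv 1$ on $\Sigma$, and boundedness of $\tilde{F}^{t}$ follows from the uniform upper bound on $I_{t}$ and Cauchy--Schwarz applied to $\intO{I_{t} \ui \cdot \cbphi}$.

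The main obstacle is the uniform-in-$t$ inf-sup condition \eqref{eq:inf_sup_condition_for_transformed_problem}. My plan is a perturbation argument about $t = 0$, using that \eqref{eq:inf_sup_nts} supplies the inf-sup constant $\beta_{0}$ for $b = b^{0}$. The difference of the two bilinear forms admits the estimate
\[
|b^{t}(\bphi,\lambda) - b(\bphi,\lambda)| = \left| \intO{\bar{\lambda}\, (I_{t} M_{t}^{\top} - id) : \nabla \bphi} \right| \leqslant \eta(t) \vertiii{\bphi}_{X} \vertiii{\lambda}_{Q},
\]
where $\eta(t) := \|I_{t} M_{t}^{\top} - id\|_{L^{\infty}(\Omega)^{d\times d}} \to 0$ as $t \to 0$. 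Given any $\lambda \in Q$, \eqref{eq:inf_sup_nts} provides $\bphi \in \Vgamma$ with $\vertiii{\bphi}_{X} = 1$ such that $|b(\bphi,\lambda)| \geqslant (\beta_{0}/2) \vertiii{\lambda}_{Q}$; by the reverse triangle inequality,
\[
\frac{|b^{t}(\bphi,\lambda)|}{\vertiii{\bphi}_{X} \vertiii{\lambda}_{Q}} \geqslant \frac{\beta_{0}}{2} - \eta(t).
\]
Shrinking $\varepsilon$ if necessary so that $\eta(t) \leqslant \beta_{0}/4$ on $\mathcal{I} = [0,\varepsilon]$ then yields \eqref{eq:inf_sup_condition_for_transformed_problem} with $\beta_{1} := \beta_{0}/4$, uniformly for $t \in \mathcal{I}$. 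This compactness/smallness of $\mathcal{I}$, combined with the $\mathcal{C}^{1}$-dependence of $T_{t}$, is what makes the uniform lower bound possible.
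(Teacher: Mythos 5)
Your proof is correct and complete. The paper itself omits the argument for this lemma, deferring to \cite[Appx. A]{RabagoNotsu2024} and only indicating that the bounds \eqref{eq:bounds_At_and_Bt} and the regularity properties \eqref{eq:regular_maps} of $A_{t}$, $I_{t}$, $M_{t}$, $B_{t}$ are the essential ingredients --- which is exactly what you exploit, so your write-up is a faithful filling-in of the omitted details. Two minor observations. First, the coercivity you establish is the relaxed $\Vgamma$-ellipticity $\Re(\aat(\bphi,\bphi)) \gtrsim \vertiii{\bphi}_{X}^{2}$ for $\bphi \in \Vgamma$ (constants defeat the real part alone on all of $X$); this is precisely the form in which the paper uses coercivity for $a = a^{0}$, so restricting to $\Vgamma$ is the correct reading of the statement, and your use of the symmetry of $A_{t}$ to pass from the quadratic-form bounds in \eqref{eq:bounds_At_and_Bt} to the bilinear bound $|A_{t}\xi\cdot\eta| \leqslant \Lambda_{4}|\xi||\eta|$ is the right (and necessary) observation for boundedness. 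Second, your inf-sup argument is a perturbation of \eqref{eq:inf_sup_nts} and genuinely requires shrinking $\varepsilon$ so that $\|I_{t}M_{t}^{\top} - id\|_{L^{\infty}} \leqslant \beta_{0}/4$ on $\mathcal{I}$; this is consistent with the paper's standing convention that $\varepsilon$ may be taken smaller as needed (it does so explicitly in Appendix \ref{appendix:lemmata_proofs}), so there is no gap --- an equivalent alternative would be to transport $b^{t}$ back to a divergence form on $\Omega_{t}$ and invoke a uniform inf-sup constant for the family of uniformly Lipschitz domains $\{\Omega_{t}\}_{t\in\mathcal{I}}$, but your route is cleaner given the setup.
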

\begin{proof}
The proof follows a parallel line of reasoning as presented in \cite[Appx. A]{RabagoNotsu2024}, so we omit it. 
However, it is important to note that one must consider the properties of $A_{t}$ and $B_{t}$ given in \eqref{eq:regular_maps}, along with the bounds specified in \eqref{eq:bounds_At_and_Bt}, to derive the desired inequality.
\end{proof}
The following two lemmas address the well-posedness of the transported perturbed version of \eqref{eq:ccbm_weak_form} and the (uniform) boundedness of its solution, respectively.
Regarding the result, it is important to note that the compatibility condition $\intSt{\uut \cdot \nnt} = 0$, assumed hereafter, is equivalent to \eqref{eq:compatibility_condition} because $\Sigma$ is invariant.
\begin{lemma}\label{lem:transported_problem}
	The pair $(\uu^{t},p^{t}) = (\ur^{t} + i \ui^{t},\pr^{t} + i \pim^{t})$ uniquely solves in $\Vgamma \times Q$ the variational problem
        \begin{equation}\label{eq:transformed_ccbm_weak_form}
        \left\{\arraycolsep=1.4pt\def\arraystretch{1.1}
        \begin{array}{rcll}
        	\aat({\uu^{t}},{\bphi}) + b^{t}(\bphi,p^{t})	&=& F^{t}(\bphi),		& \ \forall {\bphi}\in {\Vgamma},\\ 
        					b^{t}(\uu^{t},\lambda)	&=& 0,			& \ \forall \lambda \in {Q}.
        \end{array}
        \right.
        \end{equation}
\end{lemma}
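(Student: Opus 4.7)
The plan is to derive the perturbed variational problem \eqref{eq:transformed_ccbm_weak_form} as the pullback under $T_t$ of the original CCBM weak form posed over $\Omega_t := T_t(\Omega)$, and then to invoke the mixed-problem theory already available through Lemma \ref{lem:boundedness_of_sesquiliner_and_linear_forms} to secure unique solvability in $\Vgamma \times Q$.

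Since $T_t$ is a $\mathcal{C}^{1,1}$-diffeomorphism and $\Omega \in \mathcal{C}^{1,1}$, the perturbed domain $\Omega_t$ is of class $\mathcal{C}^{1,1}$ with $\Sigma_t \equiv \Sigma$ and $\Gamma_t = T_t(\Gamma)$; the flux compatibility \eqref{eq:compatibility_condition} transfers verbatim because $\Sigma$ is invariant. Lemma \ref{prop:well_posedness_of_CCBM} applied on $\Omega_t$ with data $(\ff, \bgg) \in H^{1/2}(\Sigma)^d \times H^{-1/2}(\Sigma)^d$ then supplies a unique pair $(\uu_t, p_t) \in \HH^{1}_{\Gamma_t, \vect{0}}(\Omega_t)^{d} \times \LL^{2}(\Omega_t)$ solving the CCBM weak formulation on $\Omega_t$.

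I would then define $\ut := \uu_t \circ T_t$ and $p^t := p_t \circ T_t$, whose membership in $\Vgamma \times Q$ follows from the uniform boundedness of $DT_t$, $(DT_t)^{-1}$, and $I_t$ on the parameter interval $\mathcal{I}$. For an arbitrary test pair $(\bphi, \lambda) \in \Vgamma \times Q$, the functions $\bphi_t := \bphi \circ T_t^{-1}$ and $\lambda_t := \lambda \circ T_t^{-1}$ are admissible test objects on $\Omega_t$. Substituting them into the weak formulation on $\Omega_t$ and applying the classical change-of-variables relations
\begin{equation*}
\dx_t = I_t\,\dx, \qquad (\nabla_{x_t}\bphi_t) \circ T_t = \Mt \nabla \bphi, \qquad \ds_t|_{\Gamma_t} = B_t\,\ds|_{\Gamma},
\end{equation*}
together with the algebraic identity $A_t = I_t \Mt^\top \Mt$, one obtains
\begin{equation*}
\intOt{\alpha \nabla \uu_t : \nabla \overline{\bphi}_t} = \intO{\alpha A_t \nabla \ut : \nabla \cbphi}, \qquad \intOt{p_t \, \nabla_{x_t}\!\cdot\overline{\bphi}_t} = \intO{I_t\, p^t\,(\Mt^\top : \nabla \cbphi)},
\end{equation*}
with the corresponding transformation for the incompressibility constraint. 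Because $T_t|_\Sigma = id$ gives $B_t|_\Sigma = 1$ and $\bphi_t|_\Sigma = \bphi|_\Sigma$, the Robin boundary integral over $\Sigma$ is invariant, so that $F^t(\bphi) = F(\bphi)$ as already observed in \eqref{eq:transformed_forms}. This reproduces \eqref{eq:transformed_ccbm_weak_form} precisely, establishing existence.

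Uniqueness follows from the Babu\v{s}ka-Brezzi theory: Lemma \ref{lem:boundedness_of_sesquiliner_and_linear_forms} certifies that $\aat$ is continuous and $\Vgamma$-coercive, that $F^t$ is continuous, and that $b^t$ satisfies the inf-sup condition \eqref{eq:inf_sup_condition_for_transformed_problem}, which together imply the existence of a unique solution pair in $\Vgamma \times Q$ -- necessarily equal to $(\ut, p^t)$. The main obstacle is the bookkeeping in the change-of-variables step, in particular the careful tracking of Jacobian factors in the divergence term and the verification that the boundary integral on $\Gamma_t$ produces no contribution (the no-slip condition $\ut = \vect{0}$ on $\Gamma$ being preserved by the pullback). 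Once these tensor identities are validated, the rest of the argument is routine.
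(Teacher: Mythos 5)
Your proposal is correct and follows essentially the same route as the paper: pull back the CCBM weak form on $\Omega_t$ to the reference domain via $T_t$ using the identities $(\nabla\bphi_t)\circ T_t = (DT_t)^{-\top}\nabla\bphi^t$ and $(\nabla\cdot\bphi_t)\circ T_t = (DT_t)^{-1}:\nabla\bphi^t$ together with $B_t=1$ on $\Sigma$, then settle unique solvability by the mixed (Babu\v{s}ka--Brezzi) theory supplied by Lemma \ref{lem:boundedness_of_sesquiliner_and_linear_forms}. The algebraic identity $A_t = I_t M_t^\top M_t$ and the observation that the no-slip condition on $\Gamma$ is preserved under the pullback are exactly the bookkeeping the paper relies on.
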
	
\begin{lemma}\label{lem:boundedness_of_the_transformed_state}
	For $t \in \mathcal{I}$, the solution pair $(\uu^{t},p^{t})$ of \eqref{eq:transformed_ccbm_weak_form} are uniformly bounded in $X \times Q$.
	More precisely, for all $t \in \mathcal{I}$, we have
	$
		\vertiii{\uu^{t}}_{X}, \
		\vertiii{p^{t}}_{Q} 
		\lesssim \norm{(\ff,\bgg)}_{1/2,\Sigma}$.
\end{lemma}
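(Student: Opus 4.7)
The plan is to exploit the standard saddle-point machinery for mixed variational problems, anchored by the uniform bounds on $A_{t}$, $B_{t}$, $M_{t}$, and $\dett$ provided by \eqref{eq:bounds_At_and_Bt}. First, I would test the first equation of \eqref{eq:transformed_ccbm_weak_form} with $\bphi = \ut \in \Vgamma$; since $\ut$ itself satisfies the constraint $b^{t}(\ut,\lambda)=0$ for every $\lambda \in Q$, choosing $\lambda = p^{t}$ (which lies in $Q$) eliminates the pressure term from the identity $\aat(\ut,\ut) + b^{t}(\ut,p^{t}) = F^{t}(\ut)$. Taking the real part and appealing to the coercivity of $\aat$ on $\Vgamma$ established in Lemma \ref{lem:boundedness_of_sesquiliner_and_linear_forms}, I obtain $c_{a}\, \vertiii{\ut}_{X}^{2} \leqslant |F^{t}(\ut)|$, where $c_{a}$ depends only on $\Lambda_{3}$ from \eqref{eq:bounds_At_and_Bt} and is therefore independent of $t$.

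The crucial simplification on the right-hand side is that $\Sigma_{t} \equiv \Sigma$ and $B_{t} \equiv 1$ on $\Sigma$, so $F^{t}(\bphi) = F(\bphi)$ for all $\bphi \in \Vgamma$. The boundedness of $F$ as in Lemma \ref{prop:well_posedness_of_CCBM} (via the trace theorem on $\Sigma$) therefore gives $|F^{t}(\ut)| \lesssim \norm{(\ff,\bgg)}_{1/2,\Sigma} \vertiii{\ut}_{X}$. Combining this with the coercivity estimate yields the uniform bound $\vertiii{\ut}_{X} \lesssim \norm{(\ff,\bgg)}_{1/2,\Sigma}$.

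For the pressure, I would invoke the uniform inf-sup condition \eqref{eq:inf_sup_condition_for_transformed_problem}. Rewriting the first equation in \eqref{eq:transformed_ccbm_weak_form} as $b^{t}(\bphi,p^{t}) = F^{t}(\bphi) - \aat(\ut,\bphi)$ for every $\bphi \in \Vgamma$, the inf-sup constant $\beta_{1}$ from Lemma \ref{lem:boundedness_of_sesquiliner_and_linear_forms} gives
\[
	\beta_{1} \vertiii{p^{t}}_{Q} \leqslant \sup_{\bphi \in \Vgamma \setminus \{\vect{0}\}} \frac{|F^{t}(\bphi) - \aat(\ut,\bphi)|}{\vertiii{\bphi}_{X}}.
\]
The continuity of $\aat$ (with $t$-independent norm, again thanks to \eqref{eq:bounds_At_and_Bt}) together with the previous bound on $\ut$ then produces $\vertiii{p^{t}}_{Q} \lesssim \norm{(\ff,\bgg)}_{1/2,\Sigma}$, finishing the proof.

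The main obstacle, and really the only substantive point, is confirming that every constant that appears is genuinely independent of $t$. This reduces to the two ingredients already packaged in Lemma \ref{lem:boundedness_of_sesquiliner_and_linear_forms}: the uniform ellipticity and continuity of $\aat$ (inherited from the pinching $\Lambda_{3}|\xi|^{2} \leqslant A_{t}\xi\cdot\xi \leqslant \Lambda_{4}|\xi|^{2}$) and the uniform inf-sup constant $\beta_{1}$ for $b^{t}$ (inherited from the bounds on $\dett$ and $\Mt$). Once these are in hand, the rest is a textbook Brezzi-type argument and requires no further work.
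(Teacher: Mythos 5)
Your proposal is correct and follows essentially the same route as the paper: test with $\bphi=\uu^{t}$ to kill the pressure term, use the $t$-uniform coercivity of $\aat$ from Lemma \ref{lem:boundedness_of_sesquiliner_and_linear_forms} together with $F^{t}=F$ and the trace theorem to bound $\uu^{t}$, then recover the pressure bound from the uniform inf-sup condition \eqref{eq:inf_sup_condition_for_transformed_problem} and the continuity of $\aat$. No substantive differences from the paper's argument.
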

To complete our preparations, we next issue the H\"{o}lder continuity of the state variables $(\uu^{t}, p^{t})$ with respect to $t$.
\begin{lemma}\label{lem:holder_continuity}
	Let $(\uu,p) \in \Vgamma \times Q$ be the solution of \eqref{eq:ccbm_weak_form}.
	Then, the following limit holds
	\[
		\lim_{t\to0^{+}} \frac{1}{\sqrt{t}} \left( \vertiii{\uu^{t} - \uu}_{X} + \vertiii{p^{t} - p}_{Q} \right) = 0,
	\] 
	where $({\uu^{t}}, p^{t}) \in \Vgamma \times Q$ solves \eqref{eq:transformed_ccbm_weak_form}, for $t \in \mathcal{I}$.
\end{lemma}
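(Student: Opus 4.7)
The plan is to derive an energy estimate of order $O(t)$ for the difference $(\uu^{t} - \uu, p^{t} - p)$, from which the stated $o(\sqrt{t})$ conclusion follows at once. The first step is to subtract the variational formulation \eqref{eq:ccbm_weak_form} satisfied by $(\uu, p)$ from the transformed problem \eqref{eq:transformed_ccbm_weak_form} satisfied by $(\uu^{t}, p^{t})$, exploiting the key identity $F^{t}(\bphi) \equiv F(\bphi)$ for all $\bphi \in \Vgamma$ (noted just after \eqref{eq:transformed_forms}, since $\Sigma$ is invariant under $T_{t}$). This yields the saddle-point system
\begin{equation*}
    \aat(\uu^{t} - \uu, \bphi) + b^{t}(\bphi, p^{t} - p) = (\aaa - \aat)(\uu, \bphi) + (b - b^{t})(\bphi, p), \qquad b^{t}(\uu^{t} - \uu, \lambda) = (b - b^{t})(\uu, \lambda),
\end{equation*}
valid for all $(\bphi, \lambda) \in \Vgamma \times Q$, where the original data $(\uu, p)$ has become the driving forcing.

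Second, I would bound the three right-hand side functionals. A direct computation gives
\begin{equation*}
    (\aaa - \aat)(\uu, \bphi) = \intO{\alpha\,(id - A_{t}) \nabla \uu : \nabla \cbphi}, \qquad (b - b^{t})(\bphi, \lambda) = \intO{\overline{\lambda}\,(\dett M_{t}^{\top} - id) : \nabla \bphi}.
\end{equation*}
By \eqref{eq:limits_of_maps} together with \eqref{eq:bounds_At_and_Bt}, the matrix-valued perturbations $id - A_{t}$ and $\dett M_{t}^{\top} - id$ are $O(t)$ in the $L^{\infty}(\Omega)$-norm. Combining the Cauchy--Schwarz inequality with Lemma \ref{prop:well_posedness_of_CCBM}, which provides the $t$-independent bound $\vertiii{\uu}_{X} + \vertiii{p}_{Q} \lesssim \norm{(\ff, \bgg)}_{1/2, \Sigma}$, each of the three right-hand side functionals is dominated by $c\,t\,\norm{(\ff, \bgg)}_{1/2, \Sigma}\,(\vertiii{\bphi}_{X} + \vertiii{\lambda}_{Q})$ uniformly over $t \in \mathcal{I}$.

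Third, I would invoke the abstract Brezzi saddle-point theory applied to the error system. Lemma \ref{lem:boundedness_of_sesquiliner_and_linear_forms} supplies exactly the hypotheses required, \emph{uniformly} in $t$: the sesquilinear form $\aat$ is coercive on $\Vgamma$ (hence on $\ker b^{t}$), $\aat$ and $b^{t}$ are bounded, and \eqref{eq:inf_sup_condition_for_transformed_problem} provides the inf-sup condition for $b^{t}$. Combining these with the $O(t)$ bounds from the previous step gives a constant $c > 0$, independent of $t$, such that $\vertiii{\uu^{t} - \uu}_{X} + \vertiii{p^{t} - p}_{Q} \leqslant c\,t$ for every $t \in \mathcal{I}$. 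Dividing by $\sqrt{t}$ yields $\tfrac{1}{\sqrt{t}}\bigl(\vertiii{\uu^{t} - \uu}_{X} + \vertiii{p^{t} - p}_{Q}\bigr) \leqslant c\sqrt{t} \to 0$ as $t \to 0^{+}$, which is the claimed limit (and in fact a stronger Lipschitz statement).

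The main obstacle is ensuring $t$-uniformity of every constant along the way, most notably the coercivity constant of $\aat$ and the inf-sup constant for $b^{t}$; this is precisely what Lemma \ref{lem:boundedness_of_sesquiliner_and_linear_forms} achieves, reducing what could otherwise be a delicate perturbation argument to a clean application of abstract saddle-point stability. A minor technical point to handle carefully is that the difference $p^{t} - p$ need not lie in $\oQ$ even if both pressures are separately normalized; this is harmless here because we work throughout in $Q = \LL^{2}(\Omega)$ and it is the inf-sup condition \eqref{eq:inf_sup_condition_for_transformed_problem} --- rather than a zero-mean normalization --- that controls the pressure error.
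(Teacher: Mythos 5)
Your proposal is correct and reaches the stated limit (indeed the stronger Lipschitz bound $O(t)$), but it is organized as the mirror image of the paper's argument, so a brief comparison is worthwhile. The paper forms the difference equation with the \emph{unperturbed} forms $a,b$ on the left-hand side and collects all perturbation terms, evaluated on the \emph{transported} solution $(\uu^{t},p^{t})$, into the forcing $\Phi^{t}$ of \eqref{eq:big_Phi_sup_t}; this obliges it to first establish the uniform bound of Lemma \ref{lem:boundedness_of_the_transformed_state} and then to hand-roll the saddle-point stability estimate (test with $\yt$, control the pressure through the inf-sup condition \eqref{eq:inf_sup_nts}, absorb with Young's inequality), proceeding in two steps: qualitative convergence first, the $o(\sqrt{t})$ rate second. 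You instead keep the \emph{perturbed} forms $\aat,b^{t}$ on the left, so the forcing involves only the fixed pair $(\uu,p)$, and you quote abstract Brezzi stability with the $t$-uniform coercivity and inf-sup constants of Lemma \ref{lem:boundedness_of_sesquiliner_and_linear_forms}. What your version buys: the forcing is bounded by the data estimate of Lemma \ref{prop:well_posedness_of_CCBM} alone, the estimate comes out in a single pass, and the $O(t)$ rate is explicit rather than implicit in the behaviour of $m_{t}$. The one ingredient you use that Lemma \ref{lem:boundedness_of_sesquiliner_and_linear_forms} does not explicitly record is the $t$-uniform continuity constant of $b^{t}$, which is needed for the Brezzi stability constant; it follows immediately from \eqref{eq:bounds_At_and_Bt} and \eqref{eq:regular_maps}, but you should say so when invoking the abstract theorem. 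Your closing remark about working in $Q$ rather than $\oQ$ is well taken and consistent with how the paper itself deploys \eqref{eq:inf_sup_nts}.
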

{The proofs of Lemma \ref{lem:transported_problem}, Lemma \ref{lem:boundedness_of_the_transformed_state}, and Lemma \ref{lem:holder_continuity}, which closely follow the techniques in \cite{RabagoNotsu2024}, are deferred to Appendix \ref{appendix:lemmata_proofs}.}

Additionally, by the higher regularity property of $\uu^{t}$, $\uu$, $p^{t}$, and $p$, we easily obtain the following corollary of Lemma \ref{lem:holder_continuity}.
\begin{corollary}\label{cor:holder_continuity}
	The state $(\uu,p)$ belongs to $( \HH^{2}(\Omega)^{d} \times \HH^{1}(\Omega) ) \cap ( \Vgamma \cap Q)$.
	Moreover, the following limit holds
	\[
		\lim_{t\to0^{+}} \frac{1}{\sqrt{t}} \left( \vertiii{\uu^{t} - \uu}_{\HH^{2}(\Omega)^{d}} + \vertiii{p^{t} - p}_{\HH^{1}(\Omega)} \right) = 0,
	\] 
	where $({\uu^{t}}, p^{t}) \in ( \HH^{2}(\Omega)^{d} \times \HH^{1}(\Omega) ) \cap ( \Vgamma \cap Q)$ solves \eqref{eq:transformed_ccbm_weak_form}, for $t \in \mathcal{I}$.
\end{corollary}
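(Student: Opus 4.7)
The strategy is to upgrade the H\"{o}lder continuity from Lemma~\ref{lem:holder_continuity} (phrased in the $\Vgamma \times Q$ topology) to the finer $\HH^{2}(\Omega)^{d} \times \HH^{1}(\Omega)$ topology by first establishing higher Sobolev regularity for both $(\uu,p)$ and $(\uu^{t},p^{t})$, and then applying the standard $H^{2}/H^{1}$ Stokes regularity theorem to the variable-coefficient difference system satisfied by $(\uu^{t}-\uu,\, p^{t}-p)$ on the fixed reference domain $\Omega$.

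For the higher regularity of $(\uu,p)$, I would decompose \eqref{eq:ccbm} into the real and imaginary subsystems \eqref{eq:real_u}--\eqref{eq:imaginary_u}. Each is a real Stokes problem on the $\mathcal{C}^{1,1}$ domain $\Omega$ with zero Dirichlet data on $\Gamma$ and Neumann data on $\Sigma$ of class $H^{1/2}(\Sigma)^{d}$ (using $(\ff,\bgg) \in H^{3/2}(\Sigma)^{d} \times H^{1/2}(\Sigma)^{d}$ together with the traces $\ur|_{\Sigma},\,\ui|_{\Sigma} \in H^{1/2}(\Sigma)^{d}$ inherited from $\Vgamma \subset \HH^{1}$). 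The $H^{2}/H^{1}$ Stokes regularity theorem recalled in the remark just before \eqref{eq:ccbm} then gives $(\uu,p) \in \HH^{2}(\Omega)^{d} \times \HH^{1}(\Omega)$. Applying the identical argument to $(\uu_{t},p_{t})$ on $\Omega_{t}$ and pulling back via the $\mathcal{C}^{1,1}$-diffeomorphism $T_{t}$ gives $(\uu^{t},p^{t})$ in the same class, with norms uniformly bounded in $t \in \mathcal{I}$; uniformity follows from the control of the $\mathcal{C}^{1,1}$-constants of $\Omega_{t}$ by $\VV \in \mathcal{C}^{1,1}$, combined with Lemma~\ref{lem:boundedness_of_the_transformed_state}.

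Subtracting \eqref{eq:ccbm_weak_form} from \eqref{eq:transformed_ccbm_weak_form} and regrouping shows that $(\uu^{t}-\uu,\,p^{t}-p) \in \Vgamma \times Q$ satisfies the same left-hand forms $\aaa(\cdot,\cdot)$ and $b(\cdot,\cdot)$, but with right-hand interior residues $-\intO{\alpha\,\mathfrak{a}_{t} \nabla \uu^{t} : \nabla \cbphi} + \intO{p^{t}(I_{t}M_{t}^{\top}-id):\nabla\cbphi}$ in the momentum equation and $\intO{\bar{\lambda}(I_{t}M_{t}^{\top}-id):\nabla\uu^{t}}$ in the divergence equation. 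Because $\Sigma$ is invariant under $T_{t}$ and $B_{t}\equiv 1$ on $\Sigma$, no boundary residue arises there: the difference is a Stokes-type system on $\Omega$ with zero Dirichlet condition on $\Gamma$ and the homogeneous complex Robin condition on $\Sigma$, driven by the interior sources just described. Splitting the homogeneous complex Robin condition into real and imaginary parts turns it into a coupled pair of mixed Dirichlet--Neumann conditions on $\Sigma$ whose Neumann data are the traces $\pm(\ui^{t}-\ui)|_{\Sigma}$ and $\mp(\ur^{t}-\ur)|_{\Sigma}$. Applying the $H^{2}/H^{1}$ Stokes regularity theorem to each component then bounds $\vertiii{\uu^{t}-\uu}_{\HH^{2}(\Omega)^{d}} + \vertiii{p^{t}-p}_{\HH^{1}(\Omega)}$ by the $L^{2}$-norm of the interior forcing, the $H^{1}$-norm of the divergence source, and the $H^{1/2}$-norm of the trace $(\uu^{t}-\uu)|_{\Sigma}$. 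By \eqref{eq:limits_of_maps}, the first two contributions are $O(t)$ (since $\mathfrak{a}_{t}$ and $I_{t}M_{t}^{\top}-id$ are $O(t)$ in $\mathcal{C}^{0,1}(\overline{\Omega})$, while $(\uu^{t},p^{t})$ is uniformly bounded in $\HH^{2}\times\HH^{1}$), and the last is $o(\sqrt{t})$ by the trace theorem combined with Lemma~\ref{lem:holder_continuity}. Dividing by $\sqrt{t}$ delivers the stated rate.

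The chief technical obstacle is the bookkeeping in the previous paragraph: verifying that the boundary residue on $\Sigma$ vanishes identically (exploiting $B_{t}\equiv 1$ and $T_{t}=id$ on $\Sigma$), that the interior residues factor as products of the $t$-small coefficient errors with $\HH^{2}\times\HH^{1}$-bounded states, and that the Neumann contribution coming from splitting the complex-Robin condition into its real and imaginary parts is $o(\sqrt{t})$. A subsidiary concern is to verify that the Stokes regularity constants for the difference system---which inherits the same boundary structure as \eqref{eq:ccbm}---remain uniform in $t \in \mathcal{I}$.
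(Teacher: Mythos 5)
Your proposal is correct and follows essentially the route the paper intends: the paper merely asserts that the corollary follows "easily" from the higher regularity of $\uu^{t}$, $\uu$, $p^{t}$, $p$ (established earlier for $\mathcal{C}^{1,1}$ domains and $(\ff,\bgg)\in H^{3/2}\times H^{1/2}$) together with Lemma \ref{lem:holder_continuity}, and your elliptic-regularity bootstrap on the difference system --- with the $O(t)$ interior residues and the $o(\sqrt{t})$ Robin/Neumann trace term --- is precisely the fleshed-out version of that argument.
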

We are almost ready to present the proof of Theorem \ref{thm:shape_derivative} without relying on the shape derivative of the state. 
We only need to establish additional groundwork by introducing a set of auxiliary results, which were previously established in \cite{DelfourZolesio2011} and \cite{RabagoNotsu2024}. 
The lemmas derived from these results encompass various identities crucial for our purpose and serve as key components in deriving the boundary expression for the shape derivative of $J$, aligning with the Hadamard-Zol\'{e}sio structure theorem \cite[p. 479]{DelfourZolesio2011}.
\begin{lemma}[{\cite{DelfourZolesio2011}}]\label{lem:expansion}
For $\varphi$, $\psi \in H^{2}(\Omega)$ such that $\varphi = 0$ and $\psi = 0$ on $\Sigma$ and $\VV \in \sfTheta$, we have
\[
\begin{aligned}
	- \intO{A\nabla \varphi \cdot \nabla \overline{\psi} }
	&= - \intO{ (\Delta \varphi) \VV \cdot \nabla \overline{\psi} }
		- \intO{ (\Delta \overline{\psi}) \VV \cdot \nabla \varphi }
		+ \intG{\dn{\varphi}(\VV \cdot \nabla \overline{\psi})} \\
	& \  \qquad
		 + \intG{\dn{\overline{\psi}}(\VV \cdot \nabla \varphi)}
		- \intG{(\nabla \overline{\psi} \cdot \nabla \varphi)\Vn}.
\end{aligned}
\]
\end{lemma}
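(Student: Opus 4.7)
The plan is to verify this identity by a direct integration-by-parts calculation, starting from the explicit expression $A = (\operatorname{div}\VV)\,\mathrm{id} - D\VV - (D\VV)^{\top}$ given in \eqref{eq:limits_of_maps} and using the crucial fact that $\VV$ has support in $\overline{D}_{\delta}$, so that $\VV \equiv \vect{0}$ on $\Sigma$. This means every boundary term that appears will collapse to an integral over $\Gamma$, matching the form of the right-hand side.

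First I would expand $A\nabla\varphi\cdot\nabla\overline{\psi}$ componentwise to obtain the pointwise identity
\[
	A\nabla\varphi\cdot\nabla\overline{\psi}
	= (\operatorname{div}\VV)\,\nabla\varphi\cdot\nabla\overline{\psi}
	- (D\VV\nabla\varphi)\cdot\nabla\overline{\psi}
	- (D\VV\nabla\overline{\psi})\cdot\nabla\varphi,
\]
using the transpose identity $\vect{u}\cdot M^{\top}\vect{v} = M\vect{u}\cdot\vect{v}$. The goal then reduces to showing that the sum of the three volume integrals on the right equals the sum of the five terms on the right-hand side of the lemma's claim.

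Next I would apply integration by parts on each of the two volume integrals $\int_{\Omega}(\Delta\varphi)\VV\cdot\nabla\overline{\psi}$ and $\int_{\Omega}(\Delta\overline{\psi})\VV\cdot\nabla\varphi$. Because $\VV = \vect{0}$ on $\Sigma$, the boundary terms live only on $\Gamma$ and produce exactly $\int_{\Gamma}\partial_{\nn}\varphi(\VV\cdot\nabla\overline{\psi})$ and $\int_{\Gamma}\partial_{\nn}\overline{\psi}(\VV\cdot\nabla\varphi)$. The interior terms are handled by the chain-rule identity $\nabla(\VV\cdot\nabla f) = (D\VV)^{\top}\nabla f + (D^{2}f)\VV$, which lets me split each into a $(D\VV)^{\top}$-piece (matching two of the three terms in the expansion of $A\nabla\varphi\cdot\nabla\overline{\psi}$) and a Hessian piece.

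The key step — and the only mildly delicate one — is recognizing that the two Hessian pieces combine to a perfect divergence: by symmetry of $D^{2}\varphi$ and $D^{2}\overline{\psi}$,
\[
	\nabla\varphi\cdot(D^{2}\overline{\psi})\VV + \nabla\overline{\psi}\cdot(D^{2}\varphi)\VV
	= \VV\cdot\nabla(\nabla\varphi\cdot\nabla\overline{\psi}).
\]
A final integration by parts on this divergence produces the missing $\int_{\Omega}(\operatorname{div}\VV)\,\nabla\varphi\cdot\nabla\overline{\psi}$ term together with the boundary contribution $\int_{\Gamma}(\VV\cdot\nn)\,\nabla\varphi\cdot\nabla\overline{\psi}$ (again using $\VV = \vect{0}$ on $\Sigma$). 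Assembling everything and rearranging yields the announced formula. The $H^{2}$-regularity of $\varphi,\psi$ justifies each integration by parts, and the $\mathcal{C}^{1,1}$ smoothness of $\VV$ suffices for the chain-rule step; no additional hypotheses are needed.
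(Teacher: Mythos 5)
Your proof is correct, and it is the standard derivation of this identity (the paper itself gives no proof, quoting the result from Delfour--Zol\'esio): expand $A=(\operatorname{div}\VV)\,id - D\VV-(D\VV)^{\top}$, integrate the two Laplacian terms by parts, apply $\nabla(\VV\cdot\nabla f)=(D\VV)^{\top}\nabla f+(D^{2}f)\VV$, and collect the two Hessian terms into $\VV\cdot\nabla(\nabla\varphi\cdot\nabla\overline{\psi})$, which a final integration by parts converts into the missing $(\operatorname{div}\VV)$ volume term plus the $\Vn$ boundary term. A minor but worthwhile observation you make implicitly: every boundary contribution carries a factor of $\VV$, which vanishes on $\Sigma$ since $\VV\in\sfTheta$, so the reduction of $\partial\Omega=\Gamma\cup\Sigma$ to $\Gamma$ does not actually use the stated hypothesis $\varphi=\psi=0$ on $\Sigma$.
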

\begin{lemma}[{\cite[Lem. 3.11]{RabagoNotsu2024}}]\label{lem:divergence_identity_limit}
	The state solution $\uu \in X$ of \eqref{eq:ccbm_weak_form} satisfies the equation
	\[
		\lim_{t \to 0} b(\ut - \uu, \lambda) 
		= - \lim_{t \to 0} \intO{\overline{\lambda} \nabla \cdot \left(\frac{\uu^{t} - \uu}{t}\right)}
		= - \intO{\overline{\lambda} (D\VV : \nabla \uu)}, \quad \forall \lambda \in {Q}.
	\]
\end{lemma}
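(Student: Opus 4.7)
The plan is to extract the limit directly from the divergence-type constraints satisfied by $\ut$ and $\uu$. By Lemma \ref{lem:transported_problem}, $\ut$ satisfies $\int_{\Omega} I_{t}\,\overline{\lambda}\,(M_{t}^{\top} : \nabla \ut)\,dx = 0$ for every $\lambda \in Q$, while $\uu$ fulfils $\int_{\Omega} \lambda\,(\nabla \cdot \overline{\uu})\,dx = 0$. Taking the complex conjugate of the first identity (since $I_{t}, M_{t}^{\top}$ are real), subtracting the second, and invoking the algebraic decomposition
\[
I_{t}\bigl(M_{t}^{\top} : \nabla \overline{\ut}\bigr) - \nabla \cdot \overline{\uu}
= \bigl(I_{t} M_{t}^{\top} - id\bigr) : \nabla \overline{\ut} + \nabla \cdot \bigl(\overline{\ut} - \overline{\uu}\bigr),
\]
I obtain, after dividing by $t$, the identity
\[
\int_{\Omega} \lambda\, \nabla \cdot \!\left(\frac{\overline{\ut} - \overline{\uu}}{t}\right) dx
= - \int_{\Omega} \lambda \left(\frac{I_{t} M_{t}^{\top} - id}{t}\right) : \nabla \overline{\ut}\,dx,
\qquad \forall \lambda \in Q.
\]

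To pass to the limit, I use \eqref{eq:limits_of_maps} together with the product rule, noting that $M_{t}^{\top} = (DT_{t})^{-1}$ has derivative $-D\VV$ at $t=0$; hence $(I_{t} M_{t}^{\top} - id)/t$ converges to $A' := (\dive \VV)\,id - D\VV$ uniformly on $\overline{D}$. Writing
\[
\left(\frac{I_{t} M_{t}^{\top} - id}{t}\right) : \nabla \overline{\ut}
= \left(\frac{I_{t} M_{t}^{\top} - id}{t} - A'\right) : \nabla \overline{\ut} + A' : \nabla\bigl(\overline{\ut} - \overline{\uu}\bigr) + A' : \nabla \overline{\uu},
\]
the first summand tends to zero in $L^{2}(\Omega)$ thanks to the uniform $L^{2}$-bound on $\nabla \ut$ furnished by Lemma \ref{lem:boundedness_of_the_transformed_state}, whereas the second vanishes using the strong convergence $\nabla \ut \to \nabla \uu$ in $L^{2}(\Omega)^{d\times d}$ provided by Lemma \ref{lem:holder_continuity}. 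Pairing with $\lambda \in L^{2}(\Omega)$ and using $\nabla \cdot \uu = 0$ to cancel the $(\dive \VV)(\nabla \cdot \overline{\uu})$ part of $A' : \nabla \overline{\uu}$, I arrive at
\[
\lim_{t \to 0^{+}} \int_{\Omega} \lambda\, \nabla \cdot \!\left(\frac{\overline{\ut} - \overline{\uu}}{t}\right) dx
= \int_{\Omega} \lambda\, \bigl(D\VV : \nabla \overline{\uu}\bigr)\,dx.
\]

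Conjugating this identity (equivalently, replacing $\lambda$ by $\overline{\lambda}$) yields the middle equality of the lemma, and the first equality is then immediate from the definition $b(\bphi, \lambda) = -\int_{\Omega} \lambda\,(\nabla \cdot \overline{\bphi})\,dx$ in \eqref{eq:forms_for_the_state_problem}. The delicate point is the convergence of the rearrangement product $\bigl((I_{t} M_{t}^{\top} - id)/t\bigr) : \nabla \overline{\ut}$: uniform convergence of the matrix factor alone, combined with only an $L^{2}$-bound on $\nabla \ut$, is insufficient to replace $\nabla \ut$ by $\nabla \uu$ inside the limit, so the strong convergence $\ut \to \uu$ in $X$ from Lemma \ref{lem:holder_continuity} is genuinely needed.
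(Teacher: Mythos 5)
Your proof is correct and is essentially the standard argument behind this result, which the paper itself does not prove but imports from \cite[Lem.~3.11]{RabagoNotsu2024}: subtract the two divergence constraints $b^{t}(\ut,\lambda)=0$ and $b(\uu,\lambda)=0$, isolate $\nabla\cdot(\ut-\uu)$ via the decomposition $I_{t}M_{t}^{\top}-id$, pass to the limit using the uniform convergence $(I_{t}M_{t}^{\top}-id)/t\to(\dive\VV)\,id-D\VV$ together with the strong convergence $\ut\to\uu$ in $X$ from Lemma~\ref{lem:holder_continuity}, and discard the $(\dive\VV)(\nabla\cdot\overline{\uu})$ term since $\dive\uu=0$. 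You also correctly identify the two pieces of bookkeeping that the lemma statement leaves implicit (the $1/t$ normalization in $b(\ut-\uu,\lambda)$ and the placement of the complex conjugate), and your handling of them is consistent with how the identity is actually used in the proof of Theorem~\ref{thm:shape_derivative}.
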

\begin{lemma}[{\cite[Lem. 3.13]{RabagoNotsu2024}}]\label{lem:divergence_expansion}
	For sufficiently smooth $\bphi$ and $\VV$, it holds that
	\begin{equation}\label{eq:divergence_expansion}
		\dive (\nabla\bphi^{\top} \VV) = (\VV \cdot \nabla) (\nabla \cdot \bphi) + D\VV : \nabla \bphi
			= (\VV \cdot \nabla) (\nabla \cdot \bphi) + \nabla\bphi^{\top} : D\VV^{\top}.
	\end{equation}
	%
	%
%%%	In particular, for the solution $(\uu,p)$ of the state system \eqref{eq:ccbm} and the solution $(\vv,q)$ of the adjoint problem \eqref{eq:adjoint_system}, the following identities hold
%%%	\begin{align*}
%%%		\intO{ p \dive (\nabla \overline{\vv}^{\top} \VV) }
%%%			+ \intO{ p [ (-D\VV) : \nabla \overline{\vv} ] }
%%%			& = \intO{ p (\VV \cdot \nabla) (\nabla \cdot \overline{\vv})},\\
%%%		%%
%%%		\intO{ \overline{q} \dive (\nabla\uu^{\top} \VV)}
%%%		- \intO{ \overline{q} \left( D\VV : \nabla \uu \right) }
%%%		&= 0,
%%%	\end{align*}
%%%	%
%%%	%
%%%	where the latter follows from the fact that $\dive{\uu} = 0$ in $\Omega$.
\end{lemma}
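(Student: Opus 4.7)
The plan is to verify the identity by a direct coordinate computation, since it is a purely algebraic consequence of the definitions together with the symmetry of mixed partial derivatives. Writing $\bphi = (\varphi_1,\ldots,\varphi_d)^{\top}$ and $\VV = (\theta_1,\ldots,\theta_d)^{\top}$ and recalling the paper's convention $(\nabla\bphi)_{ij} = \partial_i\varphi_j$, the matrix $\nabla\bphi^{\top}$ has entries $\partial_j\varphi_i$, so the $i$-th component of the vector $\nabla\bphi^{\top}\VV$ is
\[
    \bigl(\nabla\bphi^{\top}\VV\bigr)_i = \sum_{j=1}^{d}\theta_j\,\partial_j\varphi_i = (\VV\cdot\nabla)\varphi_i.
\]
In other words, $\nabla\bphi^{\top}\VV = (\VV\cdot\nabla)\bphi$, so the left-hand side of the identity is the divergence of the directional derivative of $\bphi$ along $\VV$.

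Next, I would apply the Leibniz rule and then sum over $i$:
\[
    \dive\bigl(\nabla\bphi^{\top}\VV\bigr)
    = \sum_{i=1}^{d}\partial_i\Bigl(\sum_{j=1}^{d}\theta_j\,\partial_j\varphi_i\Bigr)
    = \sum_{i,j=1}^{d}(\partial_i\theta_j)(\partial_j\varphi_i) \;+\; \sum_{i,j=1}^{d}\theta_j\,\partial_i\partial_j\varphi_i.
\]
In the second double sum, commuting $\partial_i$ and $\partial_j$ (valid since $\bphi$ is assumed sufficiently smooth) gives $\sum_{j}\theta_j\,\partial_j\bigl(\sum_{i}\partial_i\varphi_i\bigr) = (\VV\cdot\nabla)(\nabla\cdot\bphi)$. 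In the first double sum, relabeling the dummy indices $i\leftrightarrow j$ yields $\sum_{i,j}(\partial_j\theta_i)(\partial_i\varphi_j)$, which is precisely the Frobenius contraction $D\VV : \nabla\bphi$ under the conventions $(D\VV)_{ij}=\partial_j\theta_i$ and $(\nabla\bphi)_{ij}=\partial_i\varphi_j$. Adding the two contributions gives the first equality of \eqref{eq:divergence_expansion}.

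For the second equality I would invoke two elementary properties of the Frobenius inner product: it is symmetric ($A:B = B:A$) and invariant under simultaneous transposition ($A:B = A^{\top}:B^{\top}$). Together these give $D\VV : \nabla\bphi = \nabla\bphi^{\top} : D\VV^{\top}$, completing the proof. The computation is entirely elementary; the only genuine point of care is the index bookkeeping, since the paper employs both $\nabla\bphi$ and $D\bphi = \nabla\bphi^{\top}$ with opposite index orderings, and one must keep track of which convention is in force when identifying sums with tensor contractions. Apart from this, the identity reduces to a single application of the product rule and one use of Schwarz's theorem.
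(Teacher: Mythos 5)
Your computation is correct: the index bookkeeping is consistent with the paper's conventions $(\nabla\bphi)_{ij}=\partial_i\varphi_j$, $(D\VV)_{ij}=\partial_j\theta_i$, and $A:B=\sum_{i,j}A_{ij}B_{ij}$, and the two steps (product rule plus Schwarz, then the relabeling $i\leftrightarrow j$) give exactly the claimed identity, with the second equality following from $A:B=A^{\top}:B^{\top}$. The paper itself gives no proof of this lemma --- it is cited verbatim from \cite[Lem.~3.13]{RabagoNotsu2024} --- so there is no in-text argument to compare against; your elementary coordinate verification is a complete and self-contained substitute.
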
	
The next lemma is derived from the arguments presented in the proof of Theorem 3.4 in \cite{RabagoNotsu2024}, in conjunction with the fact that $\vv = \vect{0}$ on $\Gamma$.
\begin{lemma}\label{lem:crucial_identity}
	Let $p$ be the pressure solution of the state system \eqref{eq:ccbm} and $\vv$ be the velocity solution of the adjoint problem \eqref{eq:adjoint_system}.
	Then, for any $\VV \in \sfTheta$, the following identity holds 
	\[
	\intO{\left[ p (\nabla \cdot \VV) (\nabla \cdot \overline{\vv}) + p (\VV \cdot \nabla)(\nabla \cdot \overline{\vv}) \right]}
			 = \intG{ \dn{\overline{\vv} \cdot p\nn \Vn } }
				+ \intO{(\VV \cdot \nabla p ) \pim}.
	\] 
\end{lemma}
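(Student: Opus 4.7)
The plan is to prove the identity by direct manipulation: first substitute the adjoint divergence equation into the left-hand side to convert everything into an expression involving only the state pressure $p$, the shape-state pressure $\pim$, and $\VV$; then apply integration by parts; and finally convert the resulting boundary term on $\Gamma$ back into an expression in terms of $\partial_\nn \overline{\vv}$ using the homogeneous Dirichlet condition on $\Gamma$ from the adjoint system.

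More precisely, I would first note that the adjoint equation $-\nabla\cdot\vv = \pim$ in $\Omega$ (with $\pim$ real) gives $\nabla\cdot\overline{\vv} = -\pim$ throughout $\Omega$. Substituting this into the integrand on the left-hand side yields
\[
\intO{\left[p(\nabla\cdot\VV)(\nabla\cdot\overline{\vv}) + p(\VV\cdot\nabla)(\nabla\cdot\overline{\vv})\right]}
= -\intO{p\,\nabla\cdot(\pim\VV)}.
\]
Integration by parts then gives
\[
-\intO{p\,\nabla\cdot(\pim\VV)} = \intO{\pim\,\VV\cdot\nabla p} - \int_{\partial\Omega} p\,\pim\,\Vn\,d\sigma.
\]
Because $\VV \in \sfTheta$ has support inside $\overline{D}_\delta$ and therefore $\Vn = 0$ on $\Sigma$, the surface integral collapses to a contribution over $\Gamma$ only, producing the volume term $\intO{\pim\,\VV\cdot\nabla p}$ that already appears on the right-hand side.

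The step I expect to require the most care, though it is essentially an algebraic one, is converting the remaining boundary term $-\intG{p\,\pim\,\Vn}$ into $\intG{\partial_\nn\overline{\vv}\cdot p\nn\,\Vn}$. For this I would invoke the Dirichlet condition $\vv = \vect{0}$ on $\Gamma$ from the adjoint system \eqref{eq:adjoint_system}: since each component $\overline{v}_j$ vanishes on $\Gamma$, its tangential gradient vanishes there as well, so $\nabla\overline{v}_j = (\partial_\nn\overline{v}_j)\nn$ on $\Gamma$. Summing over $j$ yields the pointwise boundary identity $\nabla\cdot\overline{\vv} = \partial_\nn\overline{\vv}\cdot\nn$ on $\Gamma$, which combined with $\nabla\cdot\overline{\vv} = -\pim$ gives $\pim|_\Gamma = -\,\partial_\nn\overline{\vv}\cdot\nn$. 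Substituting this into $-\intG{p\,\pim\,\Vn}$ produces exactly $\intG{\partial_\nn\overline{\vv}\cdot p\nn\,\Vn}$, completing the equality.

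The requisite regularity ($\vv \in H^2(\Omega)^d$ and $p \in H^1(\Omega)$ up to the boundary, so that traces of $\partial_\nn\overline{\vv}$ and of $p,\pim$ on $\Gamma$ make sense, and so that the integration by parts is justified) is supplied by the higher regularity remarks preceding the lemma, which place $(\vv,q)$ in $\HH^2(\Omega)^d \times \HH^1(\Omega)$ under the standing assumption $(\ff,\bgg)\in H^{3/2}(\Sigma)^d\times H^{1/2}(\Sigma)^d$ and $\Omega \in \mathcal{C}^{1,1}$.
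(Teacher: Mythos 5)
Your proof is correct and is exactly the argument the paper intends (the paper itself only cites \cite{RabagoNotsu2024} together with the fact that $\vv=\vect{0}$ on $\Gamma$ rather than writing the steps out): substitute $\nabla\cdot\overline{\vv}=-\pim$ from the adjoint divergence constraint, integrate by parts, kill the $\Sigma$-contribution via $\VV=\vect{0}$ there, and use $\vv=\vect{0}$ on $\Gamma$ to get $\pim|_{\Gamma}=-\dn{\overline{\vv}}\cdot\nn$. No gaps.
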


We now provide the proof of our main result, Theorem \ref{thm:shape_derivative}.
\begin{proof}[Proof of Theorem \ref{thm:shape_derivative} ]
	The proof essentially proceeds in two steps.
	First, we evaluate the limit $\lim_{t\to0} [ J(\Omega_{t}) - J(\Omega) ]/t$.
	Then, using the regularity of the domain as well as the state and adjoint variables, we characterized the boundary integral expression for the computed limit.

	We begin by applying a change-of-variables formula (see \cite[eq. (4.2), p. 482]{DelfourZolesio2011}):
	\begin{equation}\label{eq:domain_transformation_formula}
		\intOt{\varphi_{t}} = \intO{\varphi_{t} \circ T_{t} \dett} = \intO{\varphi^{t} \dett},
	\end{equation}
	which holds for $\varphi_{t} \in L^{1}(\Omega_{t})$
combined with the identity $\eta^{2} - \zeta^{2} = (\eta - \zeta)^{2} + 2 \zeta (\eta - \zeta)$ to obtain the following sequence of calculations:
\begin{align*}
	J(\Omega_{t}) - J(\Omega)  
	&= \frac{1}{2} \intO{ {\mathfrak{i}}_{t} (\abs{\uu_{i}^{t}}^{2} - \abs{\uu_{i}}^{2}) }	
		+ \frac{1}{2} \intO{ {\mathfrak{i}}_{t} \abs{\uu_{i}}^{2} }	
		+ \frac{1}{2} \intO{ \abs{\yti}^{2} }	
		+ \intO{ \yti \cdot \uu_{i} }	\\
	& \quad  + \frac{1}{2} \intO{ {\mathfrak{i}}_{t} (\abs{\pim^{t}}^{2} - \abs{\pim}^{2}) }	
		+ \frac{1}{2} \intO{ {\mathfrak{i}}_{t} \abs{\pim}^{2} }
		+ \frac{1}{2} \intO{ \abs{\rti}^{2} }	
		+ \intO{ \rti \pim }\\
	&=: \sum_{i=1}^{8}{J}_{i}(t),		
\end{align*}
where $\yti = \uu_{i}^{t} - \uu_{i}$ and $\rti = \pim^{t} - \pim$.
In view of \eqref{eq:limits_of_maps}$_{1}$ and using Lemma \ref{lem:holder_continuity}, we easily infer that $\dot{J}_{1}(0) = \dot{J}_{3}(0) = \dot{J}_{5}(0) = \dot{J}_{7}(0) = 0$.
	The limits in \eqref{eq:limits_of_maps}$_{1}$ also reveal that
	\[
		\dot{J}_{2}(0) + \dot{J}_{6}(0) 
			= \frac{1}{2}  \intO{\operatorname{div}\VV ( \abs{\uu_{i}}^{2} + \abs{\pim}^{2}) }.
	\]
	We expand the above integral expression through the identity $\dive(\psi \bphi) = \psi \dive \bphi + \bphi \cdot \nabla \psi$, and then apply Green's theorem to obtain
	\begin{equation}\label{eq:distributed_gradient_first_part}
	\begin{aligned}
		\dot{J}_{2}(0) + \dot{J}_{6}(0)  		
			&= - \intO{ (\uu_{i} \cdot \nabla \uu_{i}^{\top} \VV + \pim \VV \cdot \nabla \pim) }
				+ \frac{1}{2}  \intG{ \abs{\pim}^{2} \Vn }.							
	\end{aligned}
	\end{equation}
	The calculations for the remaining two expressions, $\dot{J}{4}(0)$ and $\dot{J}{8}(0)$, demand more effort and necessitate the application of the adjoint system \eqref{eq:adjoint_system_weak_form}. 
	Considering that $\yt = \uu^{t} - \uu \in \Vgamma$ and $r^{t} = p^{t} - p \in Q$, we can express
	\begin{align*}
		{J}_{4}(t) + {J}_{8}(t)
		&= \Im\left\{ \intO{{\alpha} \nabla \overline{\vv} \cdot \nabla \yt } + i \intS{ \overline{\vv} \cdot \yt }
			- \intO{ \overline{q} \operatorname{div} \yt}
			- \intO{ r^{t} \operatorname{div} \vv} \right\}\\
		& \equiv \Im\left\{ a(\uu^{t} - \uu, \vv) + b(\vv, p_{t} - p) + b(\uu^{t} - \uu, q) \right\}\\
		&= \Im\{ \Phi^{t}(\vv) + b(\uu^{t} - \uu, q) \},
	\end{align*}
	where the latter equation follows from \eqref{eq:difference_equation} 
	and $\Phi^{t}(\vv)$ is given by (see \eqref{eq:special_notations} for notations)
	\begin{equation}\label{eq:big_Phi_sup_t}
		\Phi^{t}(\bphi)
		= - \intO{ {\alpha} \mathfrak{a}_{t} \nabla {\uu^{t}} : \nabla {\cbphi}}
			+ \intO{ {\mathfrak{i}}_{t} p^{t} ( \Mt^{\top} : \nabla \cbphi) } 
			+ \intO{ p^{t} ( \mathfrak{m}_{t} : \nabla \cbphi ) },
	\end{equation}
	with $\bphi = \vv \in \Vgamma$.
	Using \eqref{eq:limits_of_maps}, \eqref{eq:convergence_of_vector_valued_functions_3}, Lemma \ref{lem:divergence_identity_limit}, and identity \eqref{eq:divergence_expansion}, we obtain the following limit
\begin{equation}\label{eq:distributed_gradient}
\begin{aligned}
		\dot{J}_{4}(0) + \dot{J}_{8}(0)
		&= \lim_{t \to 0} \frac{1}{t}\left[ \Im\{ \Phi^{t}(\vv) + b(\uu^{t} - \uu, q) \} \right]\\
		& = \Im\Big\{ - \intO{ {\alpha} A \nabla {\uu} : \nabla {\overline{\vv}}} 
			+ \intO{ (\nabla \cdot \VV) p ( \nabla \cdot \overline{\vv}) } 
			+ \intO{ p [ (-D\VV) : \nabla \overline{\vv} ] } \\
		&\quad\qquad - \intO{\overline{q} (D\VV : \nabla \uu)} \Big\}\\
		&\quad  =:  \Im\left\{ K_{1} + K_{2} + K_{3} + K_{4} \right\}.
	\end{aligned}
\end{equation}
	%
	%
	%
%%%%%%
To write this expression as a boundary integral, we invoke the regularity assumptions on $\ff$, $\bgg$, and $\Omega$.
Let us note that $\uu = \vv = \vect{0}$ on $\Gamma$ which implies that $\nabla{\uu} = \dn{\uu}\nn$ and $\nabla{\overline{\vv}} = \dn{\overline{\vv}}\nn$ on $\Gamma$.
Thus, through Lemma \ref{lem:expansion}, the integral $K_{1}$ can be expanded as follows
\begin{align*}
            &- \intO{ {\alpha} A \nabla {\uu} : \nabla {\overline{\vv}}} \\
%%%%%            & \  = -\sum_{j=1}^{d} \intO{ {\alpha} A \nabla u_{j} \cdot \nabla \overline{v}_{j} }\\
            %
            %
            %		
            & \quad  = \sum_{j=1}^{d} \Bigg\{ \intO{ (- {\alpha} \Delta {u}_{j}) \VV \cdot \nabla \overline{{v}}_{j} }
            	- \intO{ ({\alpha} \Delta \overline{{v}}_{j}) \VV \cdot \nabla {u}_{j} }
            	+ \intG{\mu\dn{{u}_{j}}(\VV \cdot \nabla \overline{{v}}_{j})} \\
            & \quad   \quad \qquad
            	 + \intG{\mu\dn{\overline{{v}}_{j}}(\VV \cdot \nabla {u}_{j})}
            	- \intG{\mu(\nabla \overline{{v}}_{j} \cdot \nabla {u}_{j})\Vn} \Bigg\}\\
            & \quad  = \sum_{j=1}^{d} \Bigg\{ \intO{ (- {\alpha} \Delta {u}_{j}) \VV \cdot \nabla \overline{{v}}_{j} }
            	- \intO{ ({\alpha} \Delta \overline{{v}}_{j}) \VV \cdot \nabla {u}_{j} }
            	+ \intG{\mu\dn{\overline{{v}}_{j}}(\VV \cdot \nabla {u}_{j})} \Bigg\}\\	
            %
            %
            %	
            %%%	& \  =  \intO{ p \dive{(\nabla \overline{\vv}^{\top} \VV)}} 
            %%%				- \intG{ p\nn \cdot \nabla \overline{\vv}^{\top} \VV } \\
            %%%	&\qquad \qquad	 + \intO{\uu_{i} \cdot \nabla \uu^{\top} \VV}
            %%%			+ \intO{ \overline{q} \dive{(\nabla {\uu}^{\top} \VV)}}
            %%%			- \intG{ \overline{q} \nn \cdot \nabla {\uu}^{\top} \VV} \\
            %%%	&\qquad \qquad		
            %%%		+ \intG{\mu\dn{\uu} \cdot \nabla \overline{\vv}^{\top} \VV }
            %%%		+ \intG{\mu\dn{\overline{\vv}} \cdot \nabla \uu^{\top} \VV }
            %%%		- \intG{\mu\dn{\overline{\vv}} \cdot \dn{\uu} \Vn }\\
%%%            & \  =  \intO{ p \dive{(\nabla \overline{\vv}^{\top} \VV)}} 
%%%            			- \intG{ p\nn \cdot \nabla \overline{\vv}^{\top} \VV } \\
%%%            &\qquad \qquad	 + \intO{\uu_{i} \cdot \nabla \uu^{\top} \VV}
%%%            		+ \intO{ \overline{q} \dive{(\nabla {\uu}^{\top} \VV)}}\\
%%%            &\qquad \qquad		 
%%%            	+ \intG{\mu\dn{\overline{\vv}} \cdot \nabla \uu^{\top} \VV }
%%%            		- \intG{ \overline{q} \nn \cdot \nabla {\uu}^{\top} \VV}  \\
            %
            %
            %			
            & \quad  =  \intO{ p \dive{(\nabla \overline{\vv}^{\top} \VV)}} 
            			- \intG{ p\nn \cdot \nabla \overline{\vv}^{\top} \VV }
           			+ \intO{\uu_{i} \cdot \nabla \uu^{\top} \VV}
            			+ \intO{ \overline{q} \dive{(\nabla {\uu}^{\top} \VV)}} \\
            &\qquad \qquad		 
            	+ \intG{\overline{\sigma(\vv,q)}\nn \cdot \dn{\uu} \Vn } 
           \\  & \quad  =: K_{11} + K_{12} + K_{13} + K_{14} + K_{15}.	
\end{align*}
In light of Lemma \ref{lem:divergence_expansion} (considering $\nabla \cdot{\uu} = 0$ in $\Omega$) and subsequent application of Lemma \ref{lem:crucial_identity}, we determine that
$K_{14} + K_{4} = 0$ and $K_{11}+K_{12}+K_{2}+K_{3} = \intO{ (\VV \cdot \nabla{p}) \pim }$.
Thus, from equation \eqref{eq:distributed_gradient}, we have the limit
\[
	\dot{J}_{4}(0) + \dot{J}_{8}(0)
		= \Im\left\{ \intO{ \left[ \uu_{i} \cdot \nabla \uu^{\top} \VV + (\VV \cdot \nabla{p}) \pim \right] } \right\}
            			+  \Im\left\{ \intG{\overline{\sigma(\vv,q)}\nn \cdot \dn{\uu} \Vn } \right\}.
\]
Combining this expression with $\dot{J}_{2}(0) + \dot{J}_{6}(0)$ from \eqref{eq:distributed_gradient_first_part} yields the desired expression \eqref{eq:shape_gradient}, concluding the proof of the theorem.
\end{proof}
\begin{remark}
In the proof of Theorem \ref{thm:shape_derivative}, we only require $\Omega$ to be $\mathcal{C}^{1,1}$ regular, in addition to the condition that $(\ff, \bgg) \in H^{3/2}(\Sigma)^{d} \times H^{1/2}(\Sigma)^{d}$.
These assumptions enable us to attain higher regularity for the state and adjoint variables, facilitating the application of Lemma \ref{lem:expansion} to express the shape derivative of $J$ in a boundary integral form.
%%%Specifically, it suffices for $\Gamma$ to be $\mathcal{C}^{1,1}$ smooth, and we only assume the fixed boundary to be Lipschitz smooth. 
It actually suffices to assume that $\Sigma$ is only Lipschitz smooth to obtain the shape derivative of $J$. 
However, for the sake of simplicity in our arguments, we have adopted the stated regularity assumptions.
\end{remark}
%%%%%%%%%%%%%%%%%%%%%%%%%%%%%%%%%%%%%%%%%%%%%%%%%
%%%%%%%%%%%%%%%%%%%%%%%%%%%%%%%%%%%%%%%%%%%%%%%%%
%%%%%%%%%%%%%%%%%%%%%%%%%%%%%%%%%%%%%%%%%%%%%%%%%
The following result can be drawn easily from \eqref{eq:shape_gradient}, \eqref{eq:adjoint_system}, and the equivalence between the shape optimization problem and the original inverse problem.
\begin{corollary}[Necessary optimality condition]\label{cor:necessary_condition}
	Let the domain $\Omega^\ast$ be such that $(\uu, p)=(\uu(\Omega^\ast),p(\Omega^\ast))$ satisfies \eqref{eq:overdetermined_original}, i.e., we have $-p\nn + {\alpha} \dn{\uu}= \bgg	$ and $\uu = \ff$ on $\Sigma^{\ast}$, or equivalently,
	\begin{equation}
	\label{eq:imaginary_is_zero}
		\ui = \vect{0} \quad \text{and} \quad \pim = 0 \quad \text{on $\Omega^\ast$},
	\end{equation}
	with $(\uu, p)$ satisfying \eqref{eq:ccbm}.
	Then, the domain $\Omega^\ast$ is a stationary solution for the shape optimization problem $J(\Omega) \to \inf$ which is subject to the state problem \eqref{eq:ccbm}; i.e., it fulfills the necessary optimality condition
	\begin{equation}
	\label{eq:optimality_condition}
		{d}J(\Omega^\ast)[\VV] = 0, \quad \text{for all $\VV \in \sfTheta$}.
	\end{equation}
\end{corollary}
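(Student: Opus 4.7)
The plan is to show that at the admissible domain $\Omega^\ast$ satisfying \eqref{eq:overdetermined_original}, both the adjoint state and the imaginary parts of the forward state collapse to zero, so that every factor appearing in the boundary integrand of \eqref{eq:shape_gradient} vanishes identically on $\Gamma^\ast$. This reduces the proof to a straightforward substitution once the equivalence recorded in Remark \ref{rem:equivalent} and the unique solvability of the adjoint problem are invoked.

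First, I would invoke Remark \ref{rem:equivalent}: the assumption that $(\uu,p)$ satisfies the original overdetermined system \eqref{eq:overdetermined_original} on $\Omega^\ast$ is equivalent to $\ui = \vect{0}$ and $\pim = 0$ in $\Omega^\ast$, with $(\uu,p)=(\ur+i\ui,\pr+i\pim)$ solving the CCBM state system \eqref{eq:ccbm}. Substituting these identities into the right-hand sides of the adjoint system \eqref{eq:adjoint_system} produces the homogeneous problem
\begin{equation*}
-\alpha\Delta\vv+\nabla q = \vect{0}\ \text{in $\Omega^\ast$},\quad
-\nabla\cdot\vv = 0\ \text{in $\Omega^\ast$},\quad
\vv=\vect{0}\ \text{on $\Gamma^\ast$},\quad
\sigma(\vv,q)\nn - i\vv = \vect{0}\ \text{on $\Sigma$},
\end{equation*}
with compatibility $\langle\vv,\nn\rangle_{\Sigma}=0$. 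This is structurally identical to the homogeneous CCBM state problem analyzed in Lemma \ref{prop:well_posedness_of_CCBM} (with the sign of the Robin coefficient flipped, which does not affect the coercivity argument), and the same well-posedness reasoning applies to the adjoint variational formulation \eqref{eq:adjoint_system_weak_form}. Therefore the only solution is $(\vv,q)=(\vect{0},0)$.

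Second, I would plug this information into the shape-gradient expression from Theorem \ref{thm:shape_derivative}. The integrand on $\Gamma^\ast$ in \eqref{eq:shape_gradient} is
\begin{equation*}
\Im\bigl\{\overline{\sigma(\vv,q)\nn}\cdot\partial_\nn \uu\bigr\}+\tfrac{1}{2}|\pim|^{2}.
\end{equation*}
The first summand vanishes because $\sigma(\vv,q)\nn\equiv \vect{0}$, and the second summand vanishes because $\pim=0$ in $\Omega^\ast$ (and in particular its trace on $\Gamma^\ast$ is zero). Consequently ${d}J(\Omega^\ast)[\VV]=0$ for every $\VV\in\sfTheta$, giving the necessary optimality condition \eqref{eq:optimality_condition}.

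The only mildly delicate point, and the step I would double-check carefully, is the uniqueness of the adjoint pressure $q$. Because Stokes-type problems only determine the pressure up to a constant in purely Dirichlet settings, one must verify that the complex Robin boundary condition $\sigma(\vv,q)\nn - i\vv = \vect{0}$ on $\Sigma$ pins down $q$ uniquely: with $\vv\equiv\vect{0}$ forcing $\nabla q=0$ in $\Omega^\ast$ and the boundary trace then forcing $q\nn=\vect{0}$ on $\Sigma$, one concludes $q\equiv 0$. This is really the same mechanism that makes the forward CCBM problem uniquely solvable without a pressure normalization and is therefore consistent with Lemma \ref{prop:well_posedness_of_CCBM}.
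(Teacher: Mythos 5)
Your proposal is correct and follows essentially the same route as the paper: substitute \eqref{eq:imaginary_is_zero} into the adjoint system \eqref{eq:adjoint_system}, conclude by unique solvability that the adjoint data (hence the adjoint) vanishes, and observe that the boundary integrand in \eqref{eq:shape_gradient} is then identically zero on $\Gamma^\ast$. The only immaterial difference is that the paper records merely that the imaginary parts $\vi$ and $\qim$ vanish --- which already suffices because $\dn{\uu}$ is real on $\Gamma^\ast$ once $\ui=\vect{0}$ --- whereas you deduce the slightly stronger fact that the whole adjoint pair $(\vv,q)$ is zero.
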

\begin{proof}
	Using the assumption that $\ui = \vect{0}$ and $\pim = 0$ on $\Omega^\ast$, one obtains -- in view of \eqref{eq:adjoint_system} -- that $\vect{v}_{i} = \vect{0}$ and $q_{i}=0$ on $\Omega^\ast$.
	Thus, it follows that $\ggb = 0$ on $\Gamma^\ast$ which therefore gives us the conclusion ${d}J(\Omega^\ast)[\VV] = 0$, for any $\VV \in \sfTheta$.
\end{proof}
{With Theorem \ref{thm:shape_derivative} and Corollary \ref{cor:necessary_condition} at hand, we can devise a numerical procedure to solve Problem \ref{prob:shape_optimization_problem}. We will present this procedure in the next section, along with various numerical examples to test the proposed method.}
%%%%%%%%%%%%%%%%%%%%%%%%%%%%%%%%%%%%%%%%%%%%%%%%%%%%%%%%%%%%%%%%%%
%%%%%%%%%%%%%%%%%%%%%%%%%%%%%%%%%%%%%%%%%%%%%%%%%%%%%%%%%%%%%%%%%%
%----------------------------------------------------------------------------------------------------------------------------------
%	NUMERICAL STUDIES
%----------------------------------------------------------------------------------------------------------------------------------
\section{Numerical experiments}
\label{sec:numerical_experiments}
To numerically solve the optimization problem $J(\Omega) \to \inf$, we will apply a preconditioned shape-gradient-based descent method based on FEM.%%% with adaptive remeshing.
%%% For data contaminated with (low-level) noise, we will apply a regularization technique via perimeter penalization.
%
%
%
\begin{remark}\label{rem:using_shape_Hessian}
Newton-type methods, as shown in \cite{EpplerHarbrecht2005,AfraitesDambrineEpplerKateb2007,RabagoAzegami2020}, could be developed for the numerical procedure using shape Hessian information. 
However, we refrain from applying the method for two main reasons.
Firstly, computing second-order shape derivatives is challenging, involving substantial effort, preparations, and technical intricacies; see, e.g., \cite{AfraitesDambrineKateb2008}. 
In the computation of the shape Hessian of $J$, two additional systems of complex PDEs, which correspond to the new adjoints, must be introduced.%
The second reason is primarily numerical. 
In general, the implementation and computational effort required for computing the Hessian are not justified by a significant reduction in the number of iterations. 
Indeed, it is important to note that evaluating the shape Hessian requires the solution of (at least) four variational problems (two for the states and another two for the adjoints), making it prohibitively costly.
Despite these drawbacks, we defer the examination of the shape Hessian of $J$ for future work. %%%and its related analysis for future work.
\end{remark}
%
%
%
%--------------------------  NUMERICAL ALGORITHM --------------------------
\subsection{Numerical algorithm}
\label{subsec:Numerical_Algorithm}
Our main algorithm follows a standard approximation procedure (see, e.g., \cite{CaubetDambrineKatebTimimoun2013,RabagoNotsu2024}), the important details of which we provide as follows.

%-------------------------------------------
% CHOICE OF DESCENT DIRECTION
%-------------------------------------------
\textit{Choice of descent direction.}
We will employ a Riesz representation of the shape gradient $G$ to avoid undesired oscillations on the unknown boundary, which could result in instabilities in the approximation. 
To do this, we fix $\eta \in (0,1]$ and generate a smooth descent direction for the cost function $J$ by seeking a vector $\VV \in H_{\Sigma,0}^1(\Omega)^{d}$ that solves the variational equation
\begin{equation}\label{eq:extension_regularization}
	 \eta \intO{ \nabla \VV : \nabla \vect{\varphi}} + (1-\eta) \intG{ \nabla_{\Gamma} \VV : \nabla_{\Gamma} \vect{\varphi} }
		= - \intG{{\ggb} \nn \cdot \vect{\varphi}}, \ \text{for all $\vect{\varphi} \in H_{\Sigma,0}^1(\Omega)^{d}$}.
\end{equation}
Here, we choose $\eta = 0.5$. In this way we obtain a \textit{Sobolev gradient} \cite{Neuberger1997} representation $\VV$ of $G$.
More importantly, this approach produces a smoothed preconditioned extension of $-{G}\nn$ over the entire domain $\Omega$.
Such extensions allow us to deform the discretized computational domain by moving the (movable) nodes of the computational mesh and not only the boundary nodes that are subject for perturbation.
The boundary integral appearing on the right side of \eqref{eq:extension_regularization} corresponds to the Laplace-Beltrami operator on $\Gamma$.
Its purpose is to ensure an additional level of smoothness for the descent direction $\VV$ within $\Gamma$.
Further discussion about discrete gradient flows for shape optimization problems are issued in \cite{Doganetal2007}.
\begin{remark}
We reiterate here that our approach differs completely from that applied in \cite{CaubetDambrineKatebTimimoun2013}. 
In the cited work, the authors employed parametric regularization to address the ill-posedness of the problem, utilizing a parametric model of shape variations for numerical realization. 
This process involved restricting the domains to star-shaped configurations and using polar coordinates for parametrization, resulting in an explicit computation of $\VV$.
\end{remark}
%---------------------------------------------------------------------------------------------------
%THE MAIN ALGORITHM (FIRST-ORDER METHOD)
%---------------------------------------------------------------------------------------------------
%%%\textit{The main algorithm.}
To compute the $k$th domain $\Omega^{k}$, we carry out the following procedures:
\begin{description}
\setlength{\itemsep}{2pt}
	\item[1. \it{Initilization}] Choose an initial guess $\Omega^{0}$.
	\item[2. \it{Iteration}] For $k = 0, 1, 2, \ldots$, do the following:
		\begin{enumerate}
			\item[2.1] Solve the state's and adjoint's variational equations on the current domain $\Omega^{k}$.
			\item[2.2] Choose $t^{k}>0$, and compute the descent vector $\VV^{k}$ using \eqref{eq:extension_regularization} in $\Omega^{k}$.
			\item[2.3] Update the current domain by setting $\Omega^{k+1} := \{ x + t^{k} \VV^{k}(x) \in \mathbb{R}^{d} \mid  x \in \Omega^{k}\}$.
		\end{enumerate}
	\item[3. \it{Stop Test}] Repeat \textit{Iteration} until convergence.
\end{description}

\textit{Step-size computation and stopping condition.} In Step 2.2, $t^{k}$ is computed using a backtracking line search procedure inspired by \cite[p. 281]{RabagoAzegami2020} with the formula $t^{k} = \mu J(\Omega^{k})/|\VV^{k}|^2_{\mathbf{H}^{1}(\Omega^{k})}$ at each iteration, where $\mu > 0$ is a scaling factor. 
This step size is further reduced to avoid reversed triangles within the mesh after the update.
Meanwhile, we terminate the algorithm as soon as the condition that $\abs{J^{k-1}-J^{k}} < \varepsilon_{J}$ or $t^{k} < \varepsilon_{T}$ is satisfied where $\varepsilon_{J}, \varepsilon_{T} > 0$ are sufficiently small real numbers, or the algorithm reached the maximum allowable number of iterations.
In all experiments, we set $\mu = 0.5$, $\varepsilon_{J} = \varepsilon_{T} = 10^{-12}$, and the maximum number of iterations to $300$.

%%%%%%%%%%%%%%%%%%%%%%%%%%%%%%%%%%%%%%%%%%%%%%%%%%%%%%%%%%%%%%%%%%
%%%%%%%%%%%%%%%%%%%%%%%%%%%%%%%%%%%%%%%%%%%%%%%%%%%%%%%%%%%%%%%%%%
\subsection{Forward problem}
We let $\alpha = 1.0$ and consider the unit circle as the shape of the medium.
For the input data, we will consider $\bgg = (\sin\theta, -\cos\theta)^{\top}$, where $\theta \in [0, 2\pi)$ represents the prescribed flux. 
To obtain additional boundary measurements on the accessible part $\Sigma$, we will use synthetic data. 
Specifically, given a prescribed Neumann flux $\bgg$, we generate the Dirichlet data $\ff$ on the accessible boundary $\Sigma$ by numerically solving the well-posed PDE problem \eqref{eq:state_un} using the finite element method. 
To avoid `inverse crimes' (as discussed by Colton and Kress \cite[p. 179]{ColtonKress2019}) in producing the measurements, we generate the synthetic data with a different numerical scheme. 
This involves using a larger number of discretization points (with finer meshes of uniform mesh width $h^{\ast} = 1/256$) and applying ${P}{3}/{P}{2}$ finite element basis functions in the \textsc{FreeFem++} \cite{Hecht2012} code than in the inversion process. 
For experiments with noise-contaminated data with a specified noise level $\delta$, we perturb the data with Gaussian noise.
%%%%%%%%%%%%%%%%%%%%%%%%%%%%%%%%%%%%%%%%%%%%%%%%%%%%%%%%%%%%%%%%%%
%%%%%%%%%%%%%%%%%%%%%%%%%%%%%%%%%%%%%%%%%%%%%%%%%%%%%%%%%%%%%%%%%%
\subsection{Inversion procedure}
In the inversion procedure, all variational problems are solved using Hood-Taylor (${P}{2}/{P}{1}$) finite elements, and we discretize the domain with coarse meshes.
Specifically, in all experiments, the initial domain is discretized with uniform mesh size having $70$ nodes on the interior boundary and $100$ nodes on the exterior boundary.
\begin{remark}\label{rem:lower_degree_finite_elements}
We emphasize here that we are, in fact, using lower-degree finite elements in our numerical simulations, in contrast to the computational setup in \cite{CaubetDambrineKatebTimimoun2013}, where the authors used ${P}{4}/{P}{3}$ finite elements for the forward problem while employing ${P}{3}/{P}{2}$ finite elements in the inversion process.
\end{remark}
{We will now test our proposed method by solving concrete 2D and 3D examples of the inverse obstacle problem \eqref{problem:gip}.}
%%%%%%%%%%%%%%%%%%%%%%%%%%%%%%%%%%%%%%%%%%%%%%%%%%%%%%%%%%%%%%%%%%
%%%%%%%%%%%%%%%%%%%%%%%%%%%%%%%%%%%%%%%%%%%%%%%%%%%%%%%%%%%%%%%%%%
\subsection{Numerical tests in 2D}
In all plotted figures illustrating the computed shapes, the surface of the medium is represented by the thicker black solid lines, while the initial guesses are depicted with black dotted lines. 
The exact obstacles are illustrated by red solid lines, whereas the intermediate and final approximate shapes are represented by black dashed lines (or grey dashed lines), usually accompanied by markers. 
We initiate the study with numerical experiments using exact data before progressing to cases involving noisy data.
	
Before we proceed with our numerical examples, we recall that, by assumption, $\Omega$ is of class $\mathcal{C}^{1,1}$.
However, in our experiments, we will also consider cases of unknown obstacles with Lipschitz boundaries (e.g., a square or an \textsf{L}-shape obstacle).
Although these test examples violate the regularity assumption, we will see below that the algorithm was still able to produce fair reconstructions of the considered obstacle types.
%
%%%%%%%%%%%%%%%%%%%%%%%%%%%%%%%%%%%%%%%%%%%%%%%%%%%%%%%%%%%%%%%%%%
%%%%%%%%%%%%%%%%%%%%%%%%%%%%%%%%%%%%%%%%%%%%%%%%%%%%%%%%%%%%%%%%%%
\subsubsection{Large convex obstacles}
The method performs exceptionally well with large convex obstacles exhibiting smooth shapes (see the left plot in Figure \ref{fig:large_convex}) and functions quite effectively even with large convex obstacles that have corners (see the right plot in Figure \ref{fig:large_convex}) -- as expected.
%-----------------------------------------------------------------------------------------------------------------------------------------------
\begin{figure}[htp!]
\centering
\resizebox{0.23\textwidth}{!}{\includegraphics{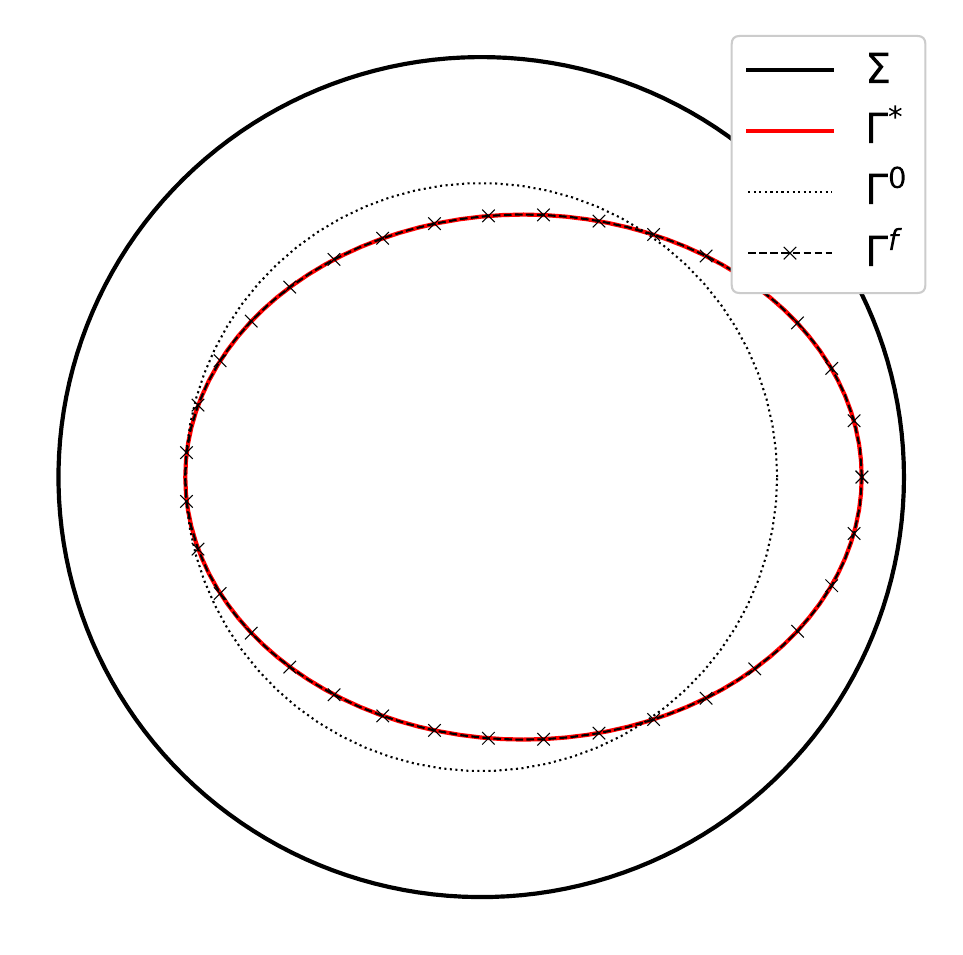}}
\resizebox{0.23\textwidth}{!}{\includegraphics{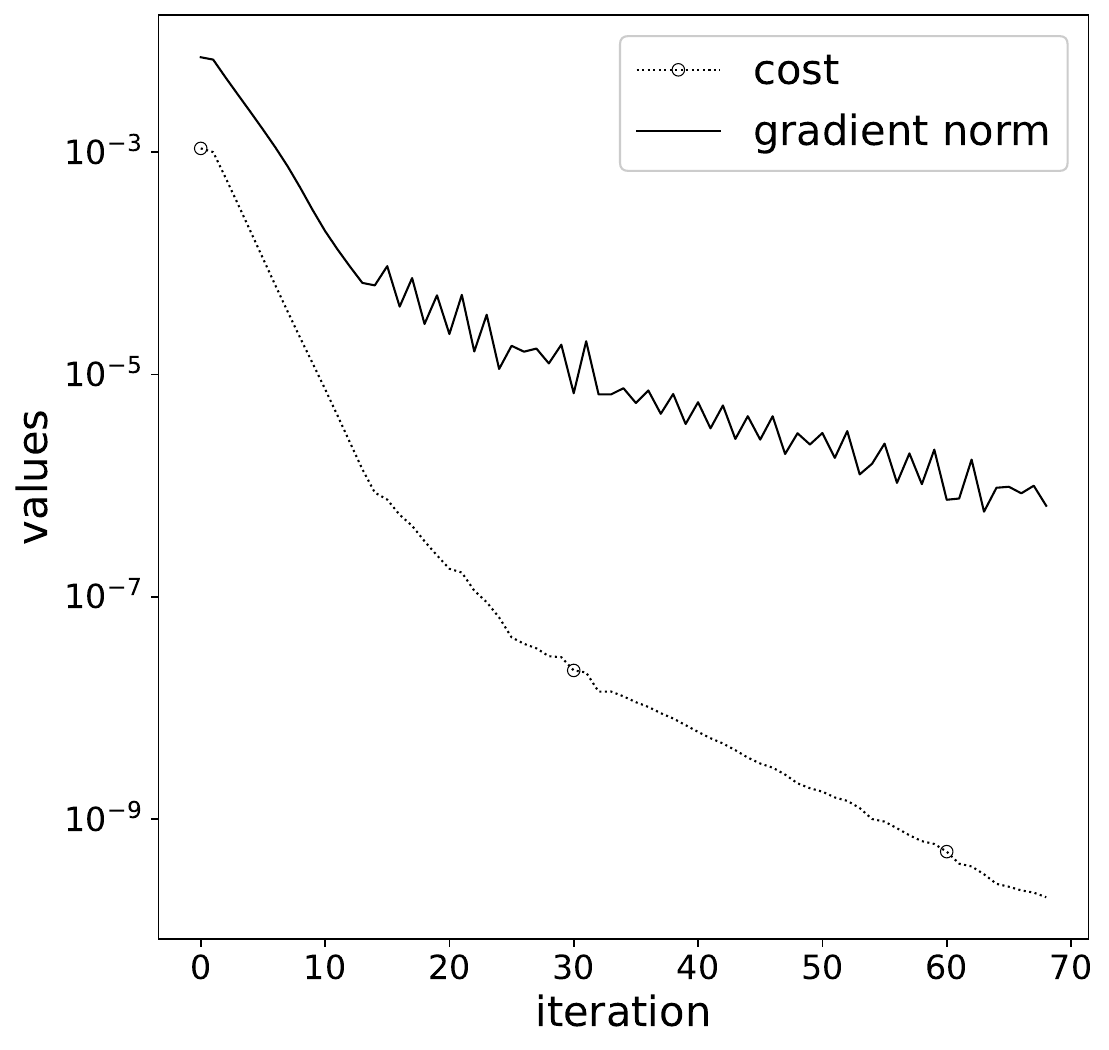}} \hfill 
\resizebox{0.23\textwidth}{!}{\includegraphics{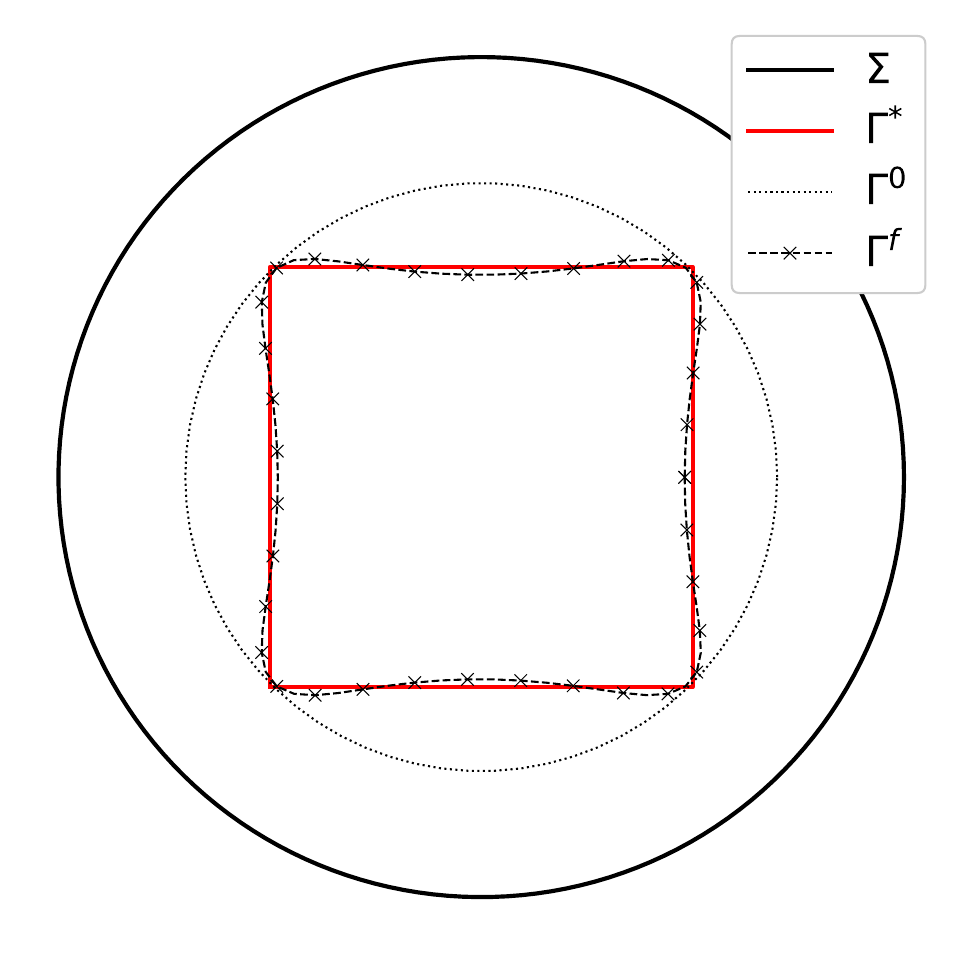}}
\resizebox{0.23\textwidth}{!}{\includegraphics{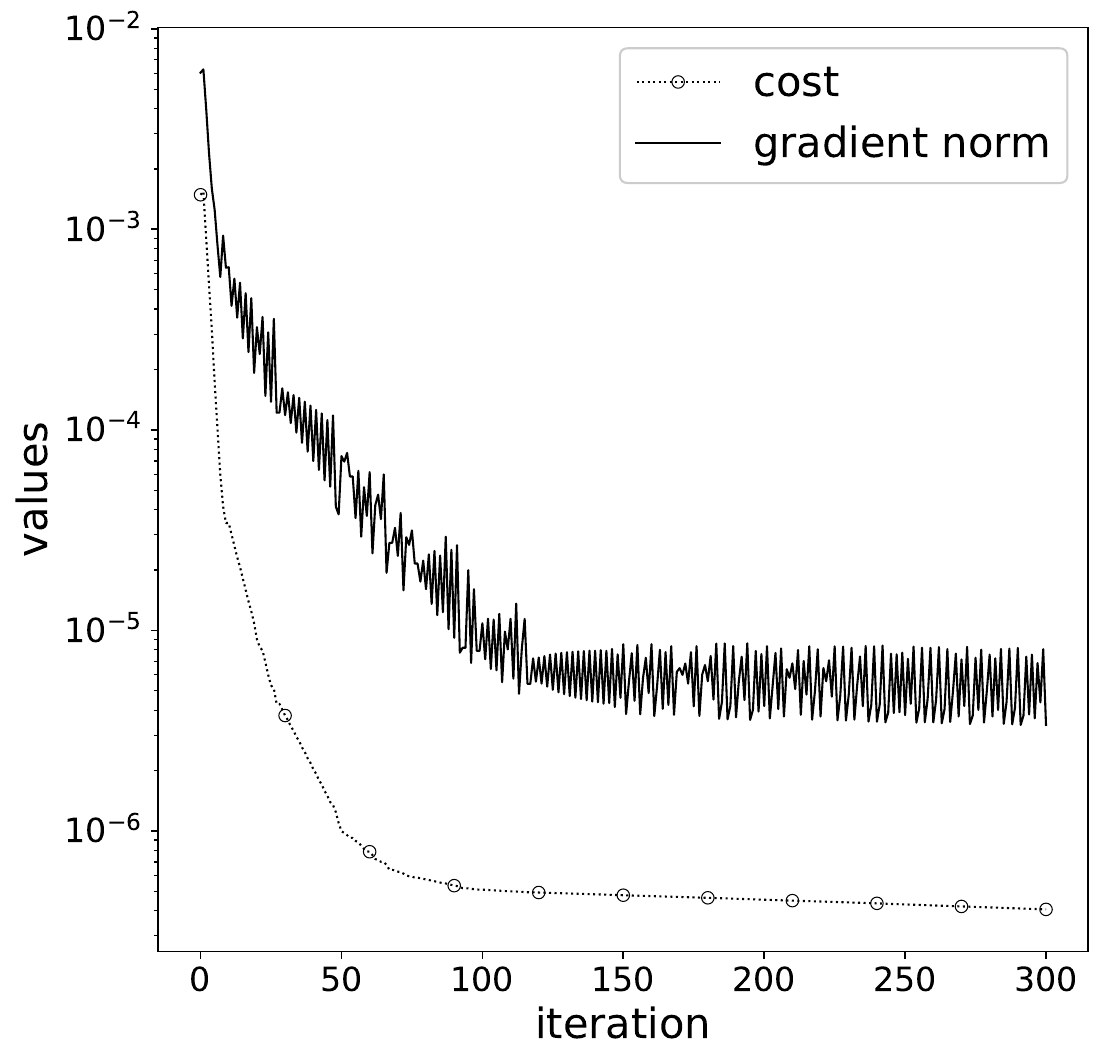}}
\caption{Shape reconstructions with large convex obstacles}
\label{fig:large_convex}
\end{figure}
%-----------------------------------------------------------------------------------------------------------------------------------------------
\subsubsection{Large non-convex obstacle}
Meanwhile, when dealing with more complicated shapes featuring pronounced concavities, the method performs only fairly. 
For instance, for obstacles parametrized as follows
\begin{align}
	\Gamma^{\ast}_{1} &= \left\{\begin{pmatrix} 0.195+0.4(\cos{t}+0.65\cos{2t}), 0.55\sin{t}\end{pmatrix}^{\top},\ \forall t \in [0, 2\pi) \right\},\label{ex:case1}\tag{C.I}\\
	\Gamma_{2}^{\ast} &= \left\{\begin{pmatrix} 0.64 \cos{t}, 0.48 \sin{t} (1.8 + \cos{(2t)}) \end{pmatrix}^{\top},\ \forall t \in [0, 2\pi) \right\},\label{ex:case2}\tag{C.II}\\
	\Gamma^{\ast}_{3} 
		&= \left\{\begin{pmatrix} -0.25 + \displaystyle\frac{0.6+0.54\cos{t}+0.06\sin{2t}}{1+0.75\cos{t}}\cos{t} \\[0.75em] 0.05 + \displaystyle\frac{0.6+0.54\cos{t}+0.06\sin{2t}}{1+0.75\cos{t}}\sin{t}\end{pmatrix},\ \forall t \in [0, 2\pi) \right\},\label{ex:case3}\tag{C.III}\\
	\Gamma^{\ast}_{4} &= \left\{0.4(1+0.75\cos(5t+\pi))\begin{pmatrix}\cos{t}, \sin{t}\end{pmatrix}^{\top},\ \forall t \in [0, 2\pi) \right\},\label{ex:case4}\tag{C.IV}	
\end{align}
the reconstructed shapes are only reasonably accurate, as depicted in Figure \ref{fig:large_complex}.
It seems that obstacles with deep concavities are more challenging to reconstruct with good accuracy -- as expected (see the rightmost plot in Figure \ref{fig:large_complex}). 
Nevertheless, the proposed method accurately predicts the positions of concavities within the obstacles, as evident in Figure \ref{fig:large_complex}.
%-----------------------------------------------------------------------------------------------------------------------------------------------
\begin{figure}[htp!]
\centering
 \hfill 
\resizebox{0.24\textwidth}{!}{\includegraphics{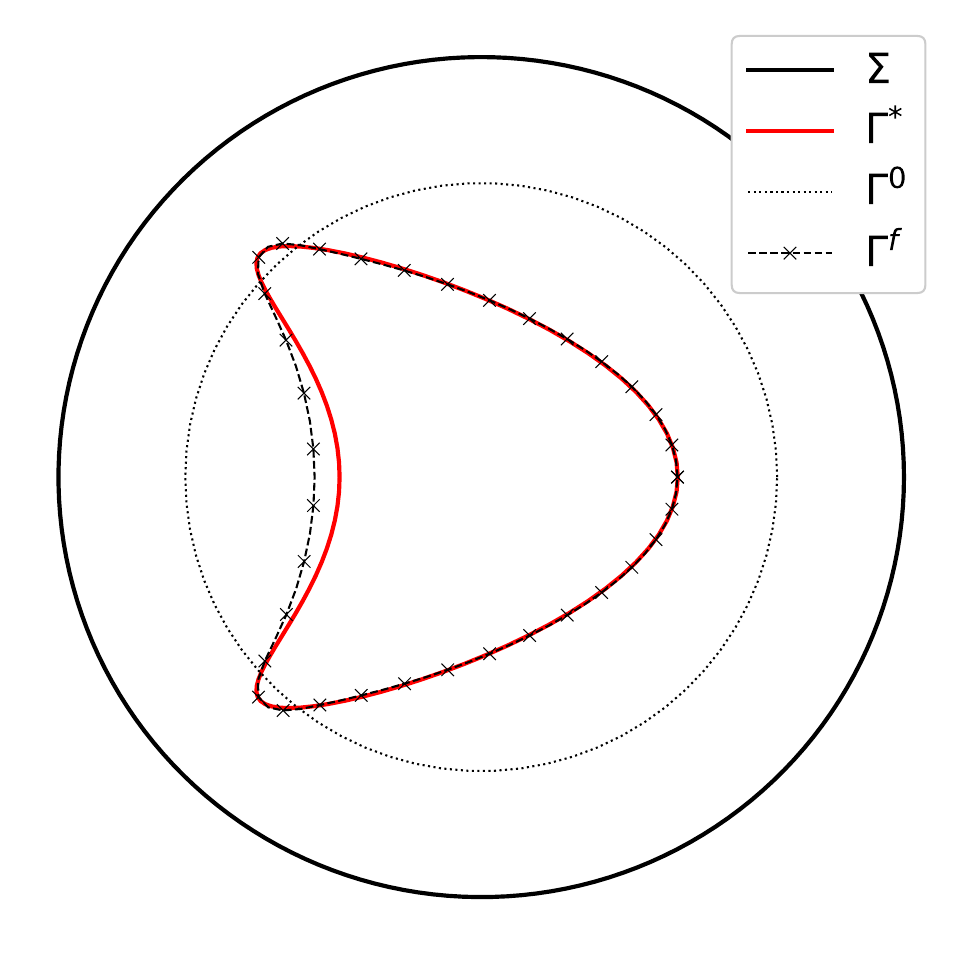}} \hfill 
\resizebox{0.24\textwidth}{!}{\includegraphics{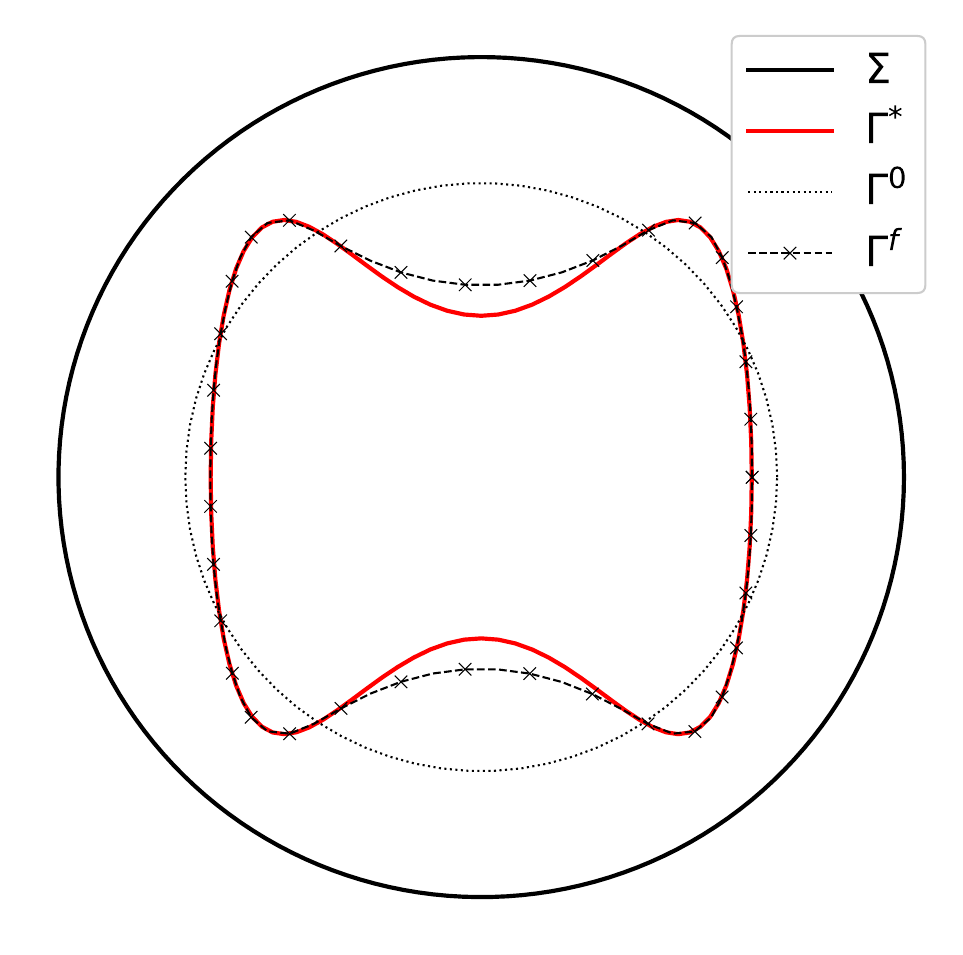}} \hfill 
\resizebox{0.24\textwidth}{!}{\includegraphics{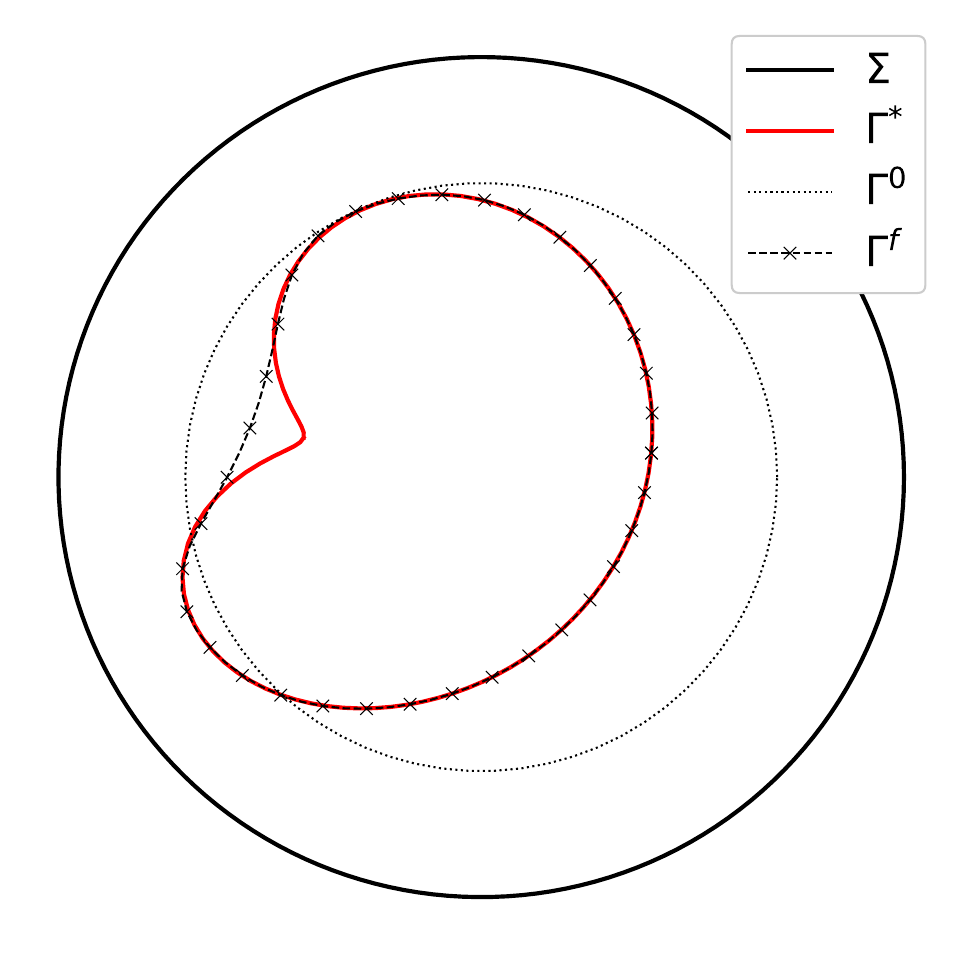}} \hfill 
\resizebox{0.24\textwidth}{!}{\includegraphics{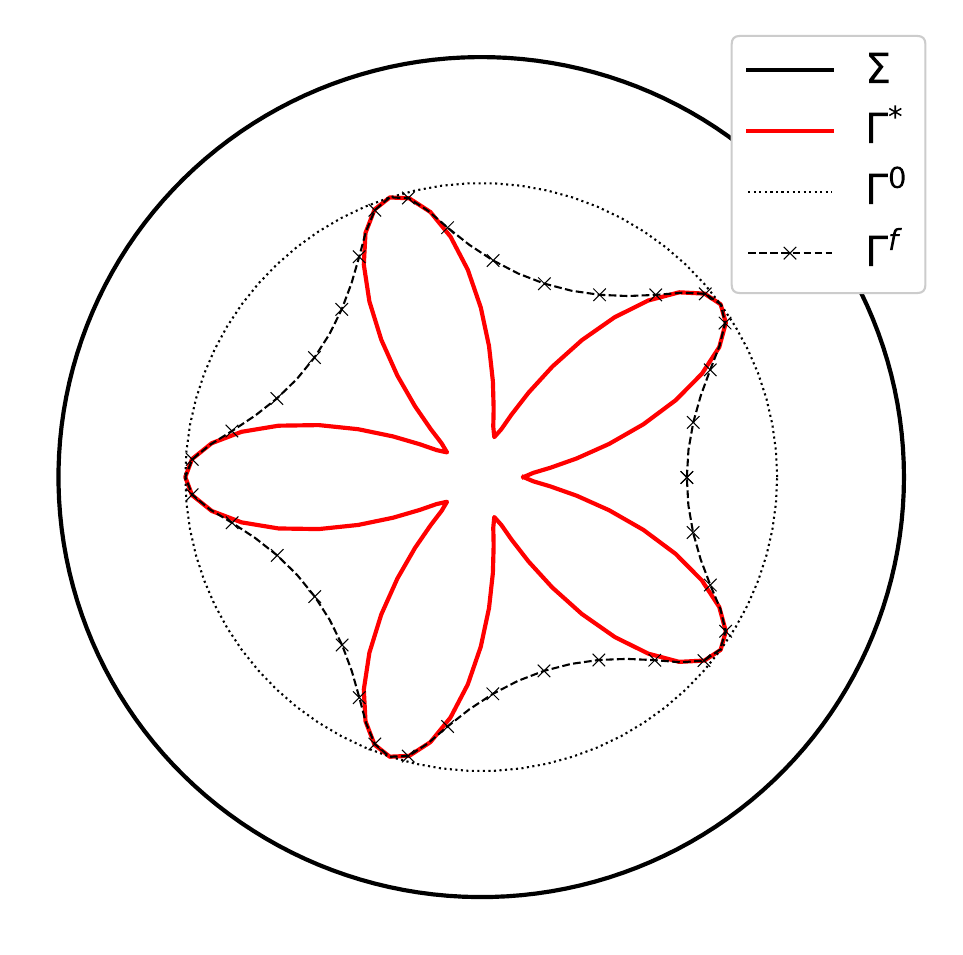}}  \hfill 
\caption{Shape reconstructions with complicated-shaped obstacles}
\label{fig:large_complex}
\end{figure}
%-----------------------------------------------------------------------------------------------------------------------------------------------
%-----------------------------------------------------------------------------------------------------------------------------------------------
\subsubsection{Effect of the choice of initial guess}
We also tested various initial guesses to assess their impact on the reconstruction. 
Based on our findings, the effect on the reconstruction is negligible (see Figure \ref{fig:initial_guesses}), unless the initial guess is excessively large or small. 
In such cases, it becomes imperative to implement remeshing, involving the use of coarser meshes, after a certain number of iterations. 
This necessity arises because, with continued iterations, the mesh quality deteriorates, leading to the flattening of some triangles, especially when employing large step sizes. 
Addressing this issue is feasible by opting for smaller step sizes in each iteration.%-----------------------------------------------------------------------------------------------------------------------------------------------
\begin{figure}[htp!]
\centering 
\resizebox{0.32\textwidth}{!}{\includegraphics{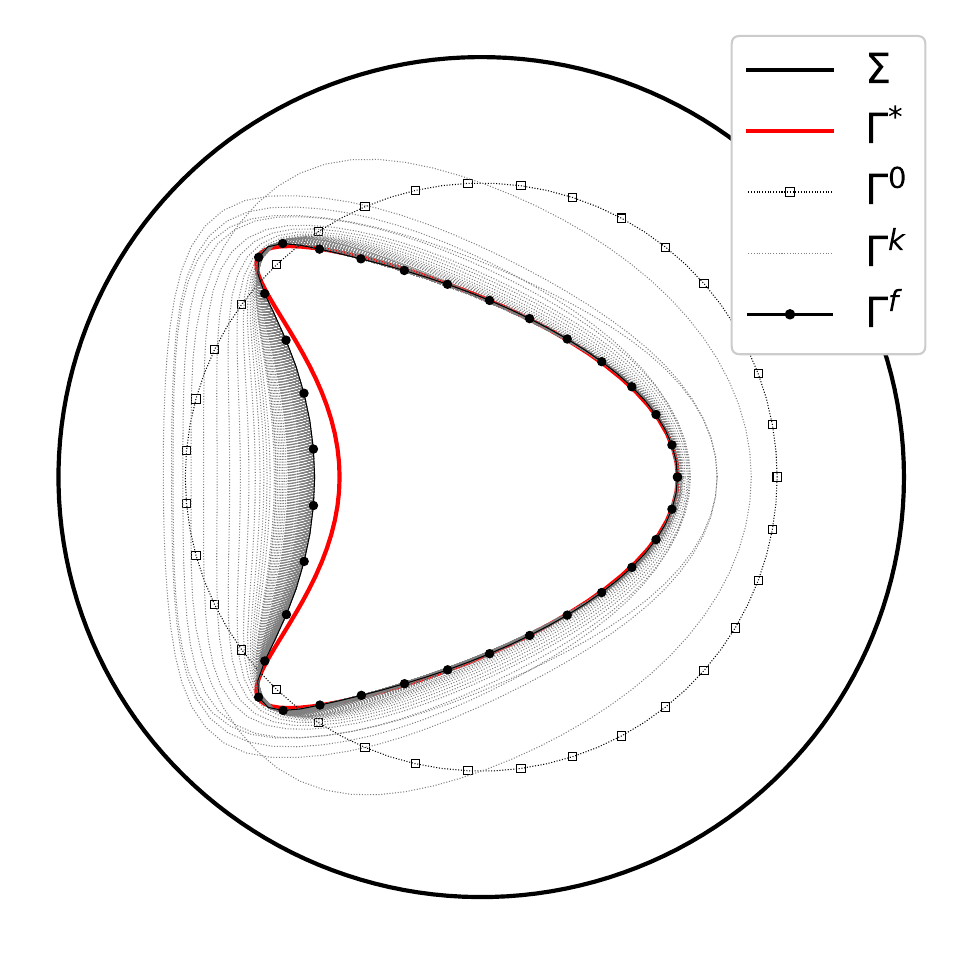}} \hfill
\resizebox{0.32\textwidth}{!}{\includegraphics{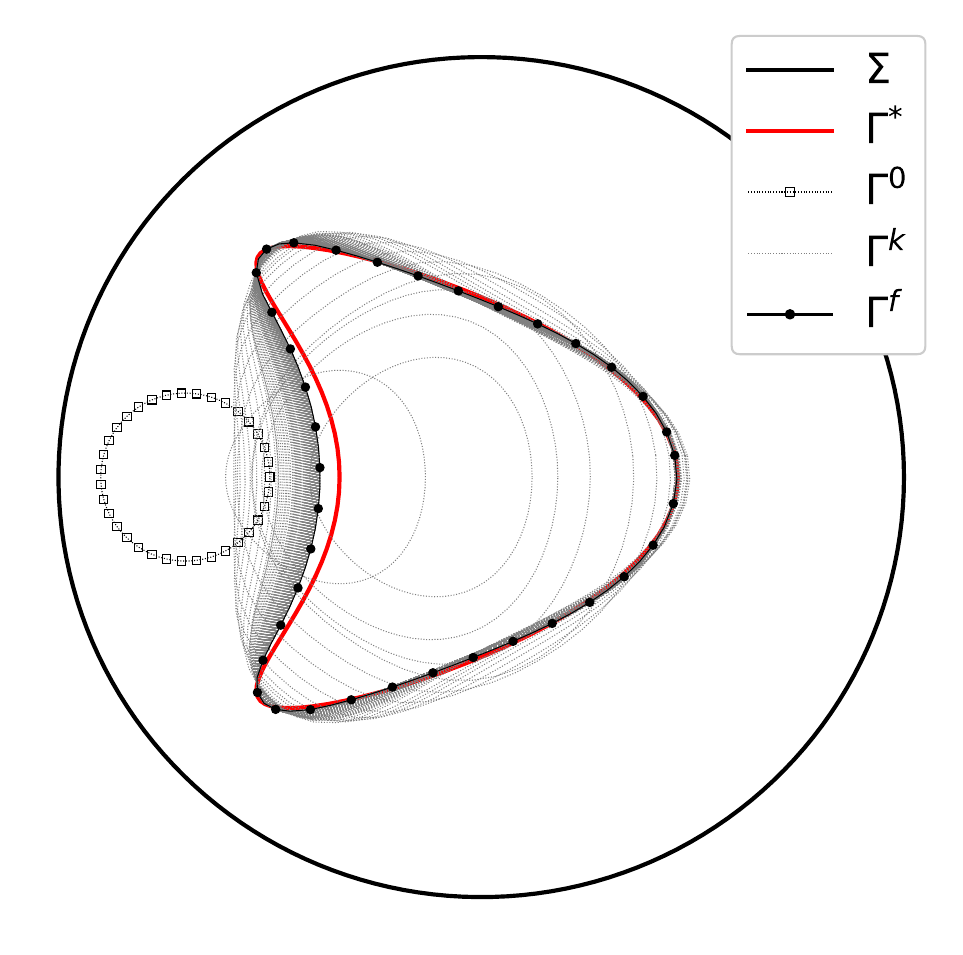}} \hfill
\resizebox{0.32\textwidth}{!}{\includegraphics{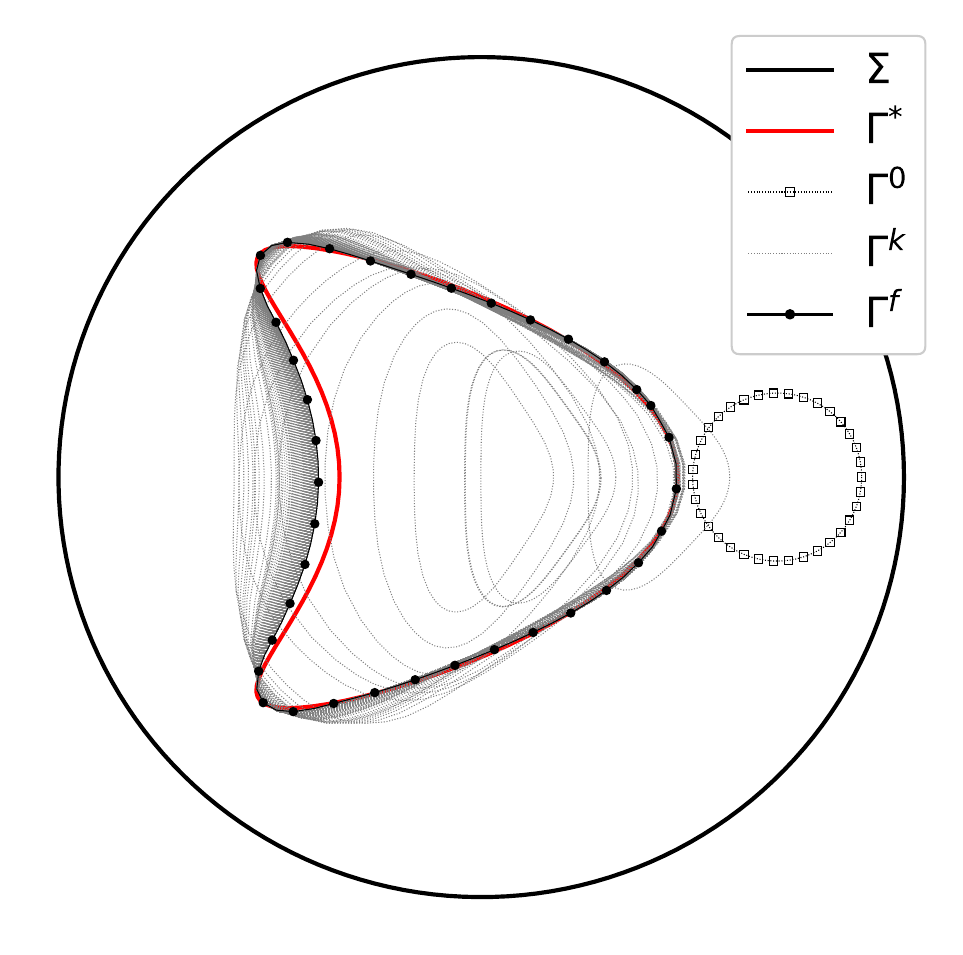}} 
\caption{Effect of the choice of initial guess}
\label{fig:initial_guesses}
\end{figure}
%-----------------------------------------------------------------------------------------------------------------------------------------------

%-----------------------------------------------------------------------------------------------------------------------------------------------
\subsubsection{Noisy data}
We present reconstructions using noisy data and emphasize that, in the presence of noise, reconstructions typically involve regularization. 
One common approach is perimeter penalization (see, e.g., \cite{CaubetDambrineKatebTimimoun2013,RabagoAzegami2018}). 
The problem is inherently ill-posed, necessitating the application of regularization methods for numerical solutions. 
Introducing a perimeter functional term, for instance, results in well-posed problems (see, e.g., \cite{Dambrine2002,BucurButtazzo2005}).
However, in our numerical investigation with noisy data, we refrain from adding any additional regularization term to our method. 
This decision is based on our observations from conducted tests, where the proposed CCBM formulation exhibited a regularizing effect similar to the Kohn-Vogelius method; see, e.g., \cite{AfraitesRabago2022}.
The results of our numerical experiments, depicted in Figures \ref{fig:noisy_data} (also referring to Figures \ref{fig:influence_of_size} and \ref{fig:L-shape}), demonstrate reasonable reconstructions even in the presence of high noise levels. 
Additionally, the method generally exhibited the capability to accurately detect concavities within obstacles. 
It is worth noting, however, that the presence of noise compels the approximation to converge towards larger cost values, indicating a greater deviation of the reconstructed shape from the exact shape, as expected.
%-----------------------------------------------------------------------------------------------------------------------------------------------
\begin{figure}[htp!]
\centering
\resizebox{0.32\textwidth}{!}{\includegraphics{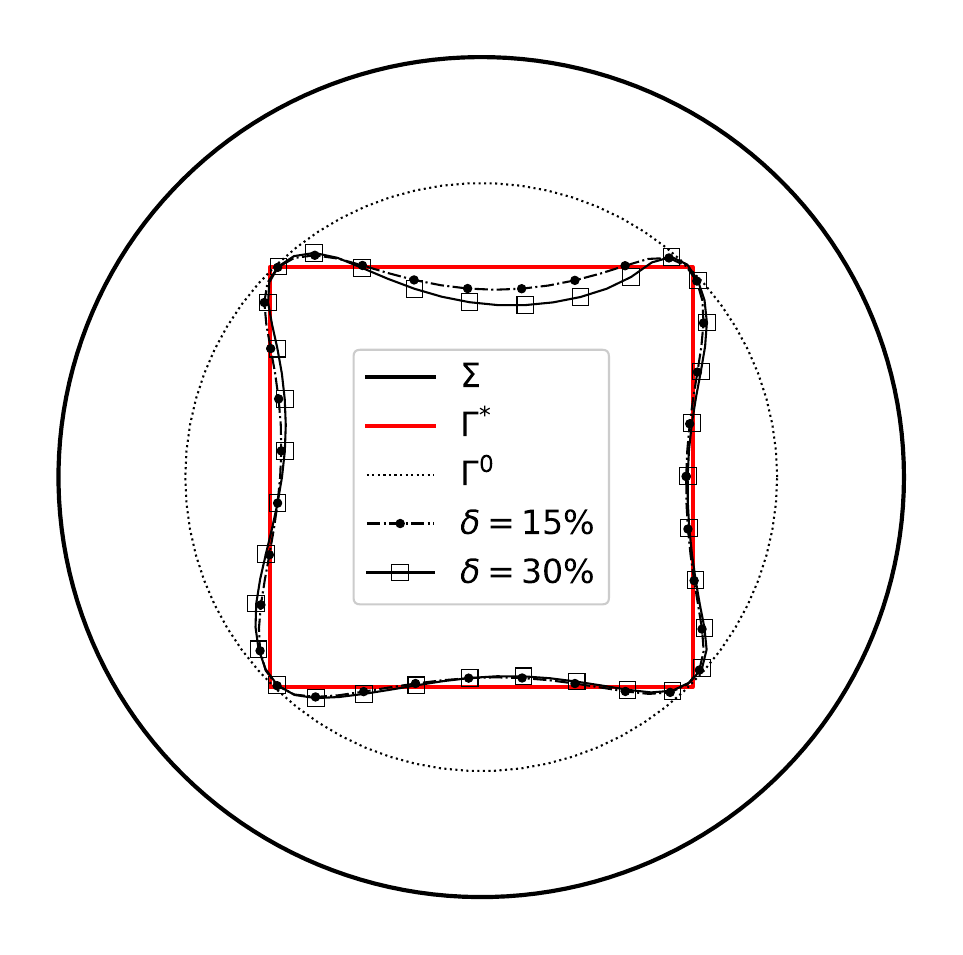}}\hfill
\resizebox{0.32\textwidth}{!}{\includegraphics{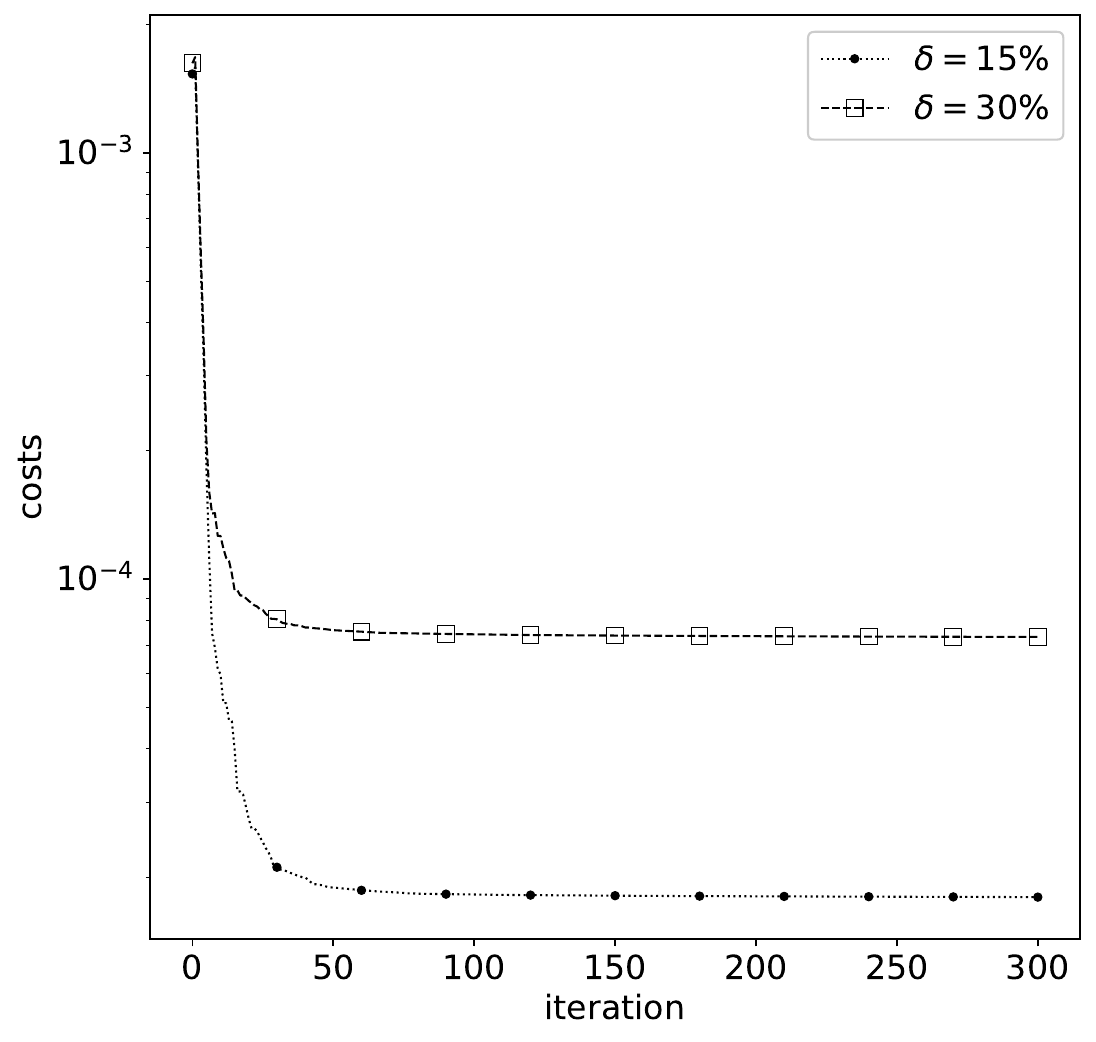}}\hfill
\resizebox{0.32\textwidth}{!}{\includegraphics{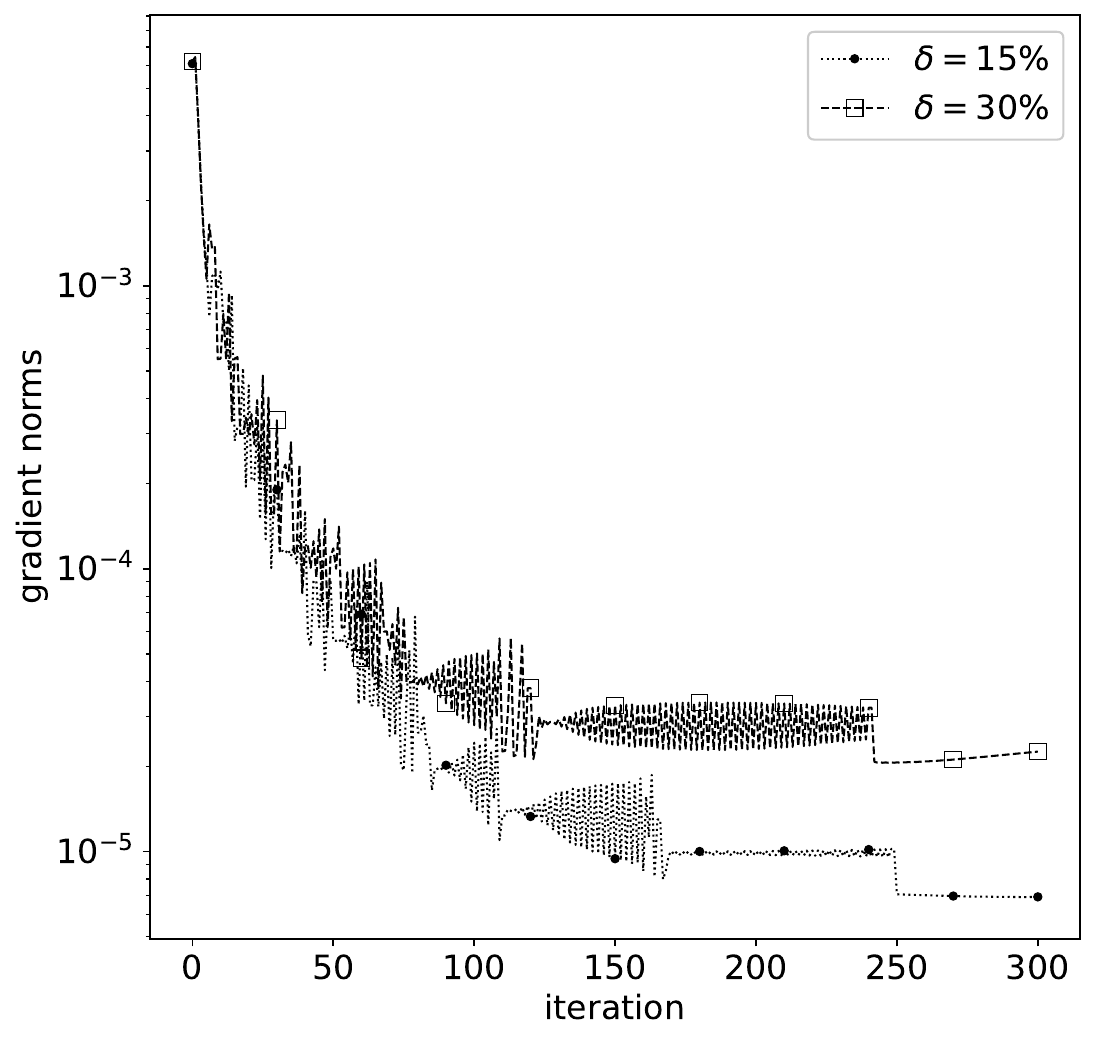}}\hfill
\\[0.7em]
\resizebox{0.32\textwidth}{!}{\includegraphics{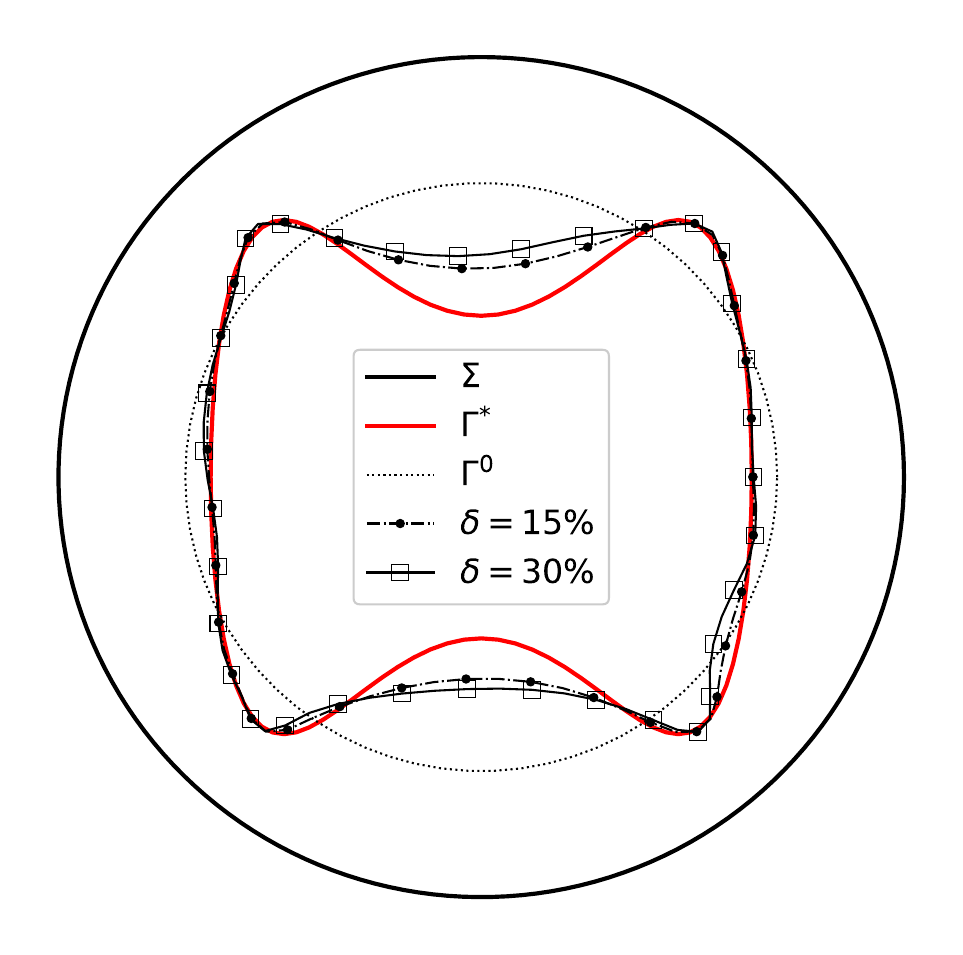}}\hfill
\resizebox{0.32\textwidth}{!}{\includegraphics{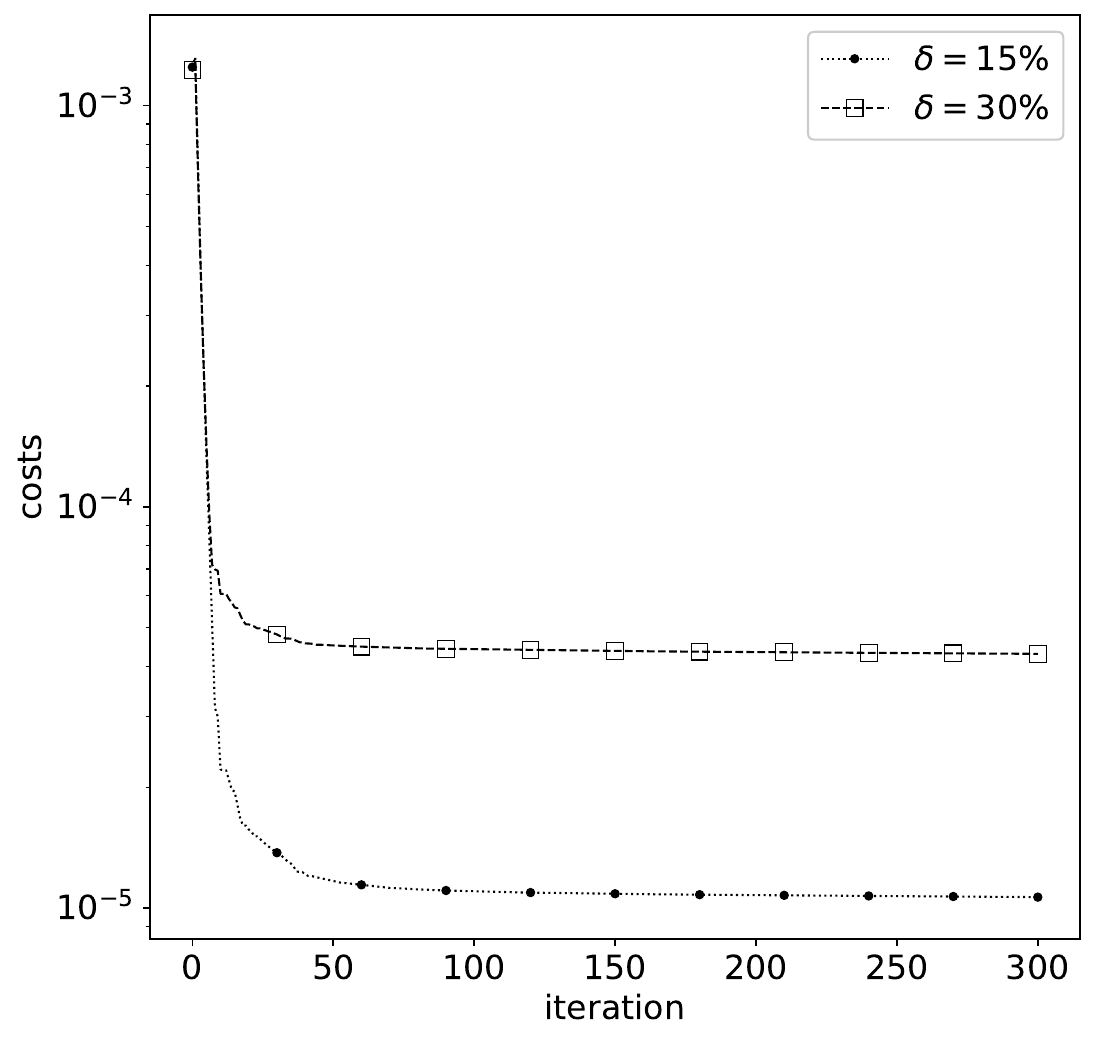}}\hfill
\resizebox{0.32\textwidth}{!}{\includegraphics{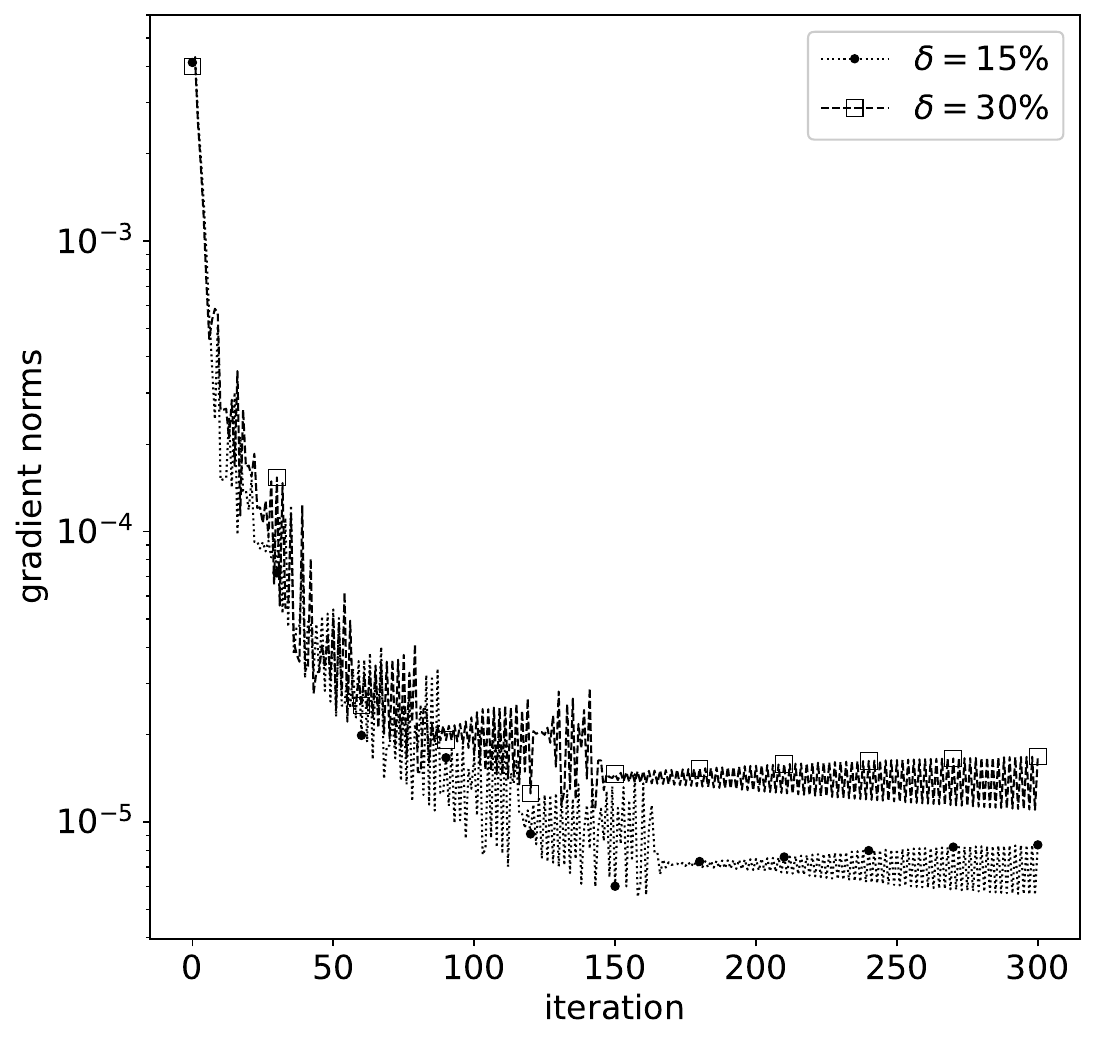}}\hfill
\caption{Shape reconstructions with noisy data}
\label{fig:noisy_data}
\end{figure}
%-----------------------------------------------------------------------------------------------------------------------------------------------

%-----------------------------------------------------------------------------------------------------------------------------------------------
\subsubsection{Influence of the size of the obstacle}
We next look at the influence of the obstacle's size on the reconstruction. 
We observed that when attempting to reconstruct a smaller object, the results are worse, as evident in Figure \ref{fig:influence_of_size} -- as expected. 
This phenomenon, related to the obstacle's size, could be further explained using the explicit calculus of the \textit{shape Hessian}, as done, for example, in \cite{AfraitesDambrineEpplerKateb2007,CaubetDambrineKatebTimimoun2013}. 
We defer this aspect for future examination, as the first-order method already yields satisfactory reconstructions of shapes, even in conditions characterized by a high level of noise ($\delta = 15\%$). See also Remark \ref{rem:using_shape_Hessian}.
%-----------------------------------------------------------------------------------------------------------------------------------------------
\begin{figure}[htp!]
\centering
\resizebox{0.32\textwidth}{!}{\includegraphics{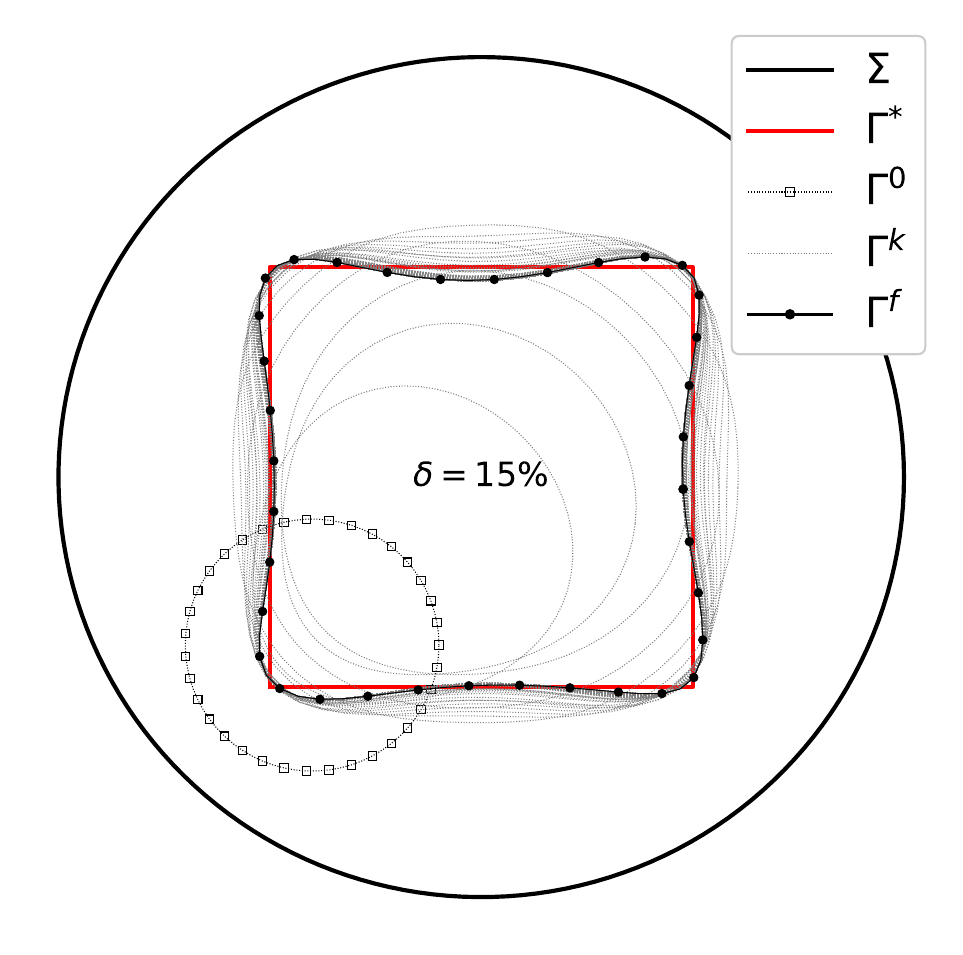}}\hfill
\resizebox{0.32\textwidth}{!}{\includegraphics{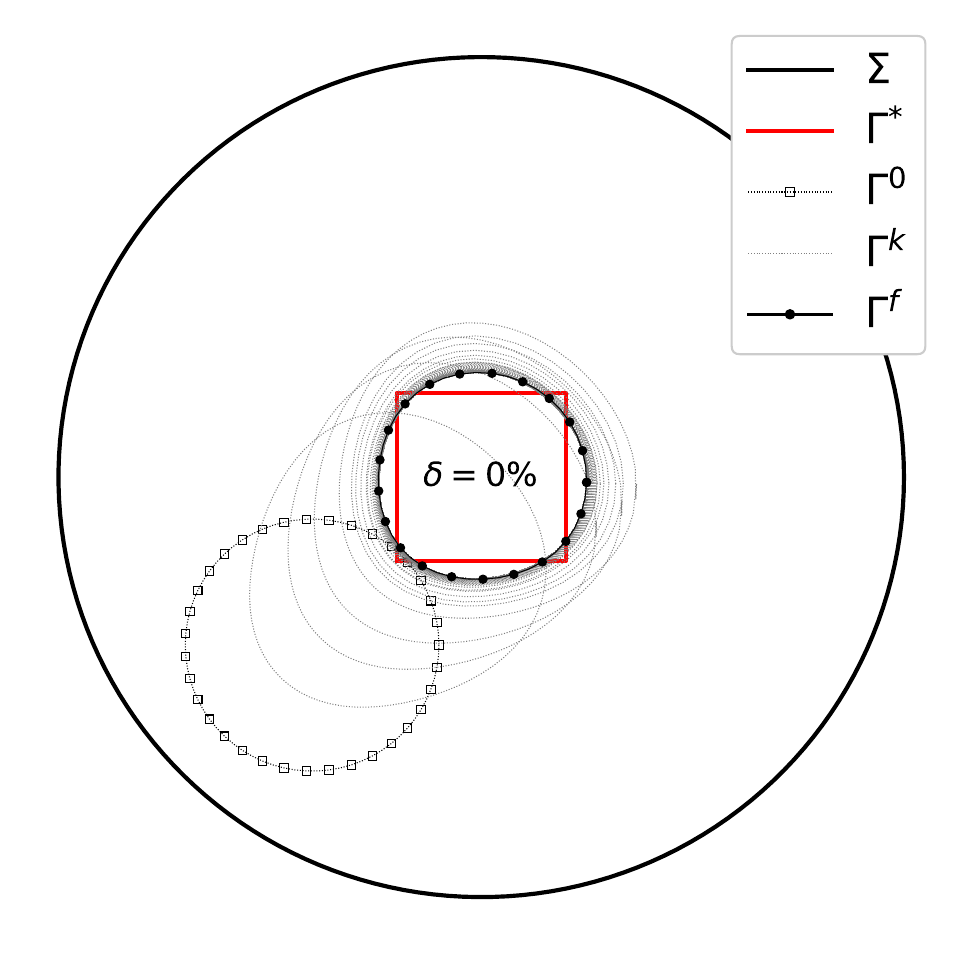}}\hfill
\resizebox{0.32\textwidth}{!}{\includegraphics{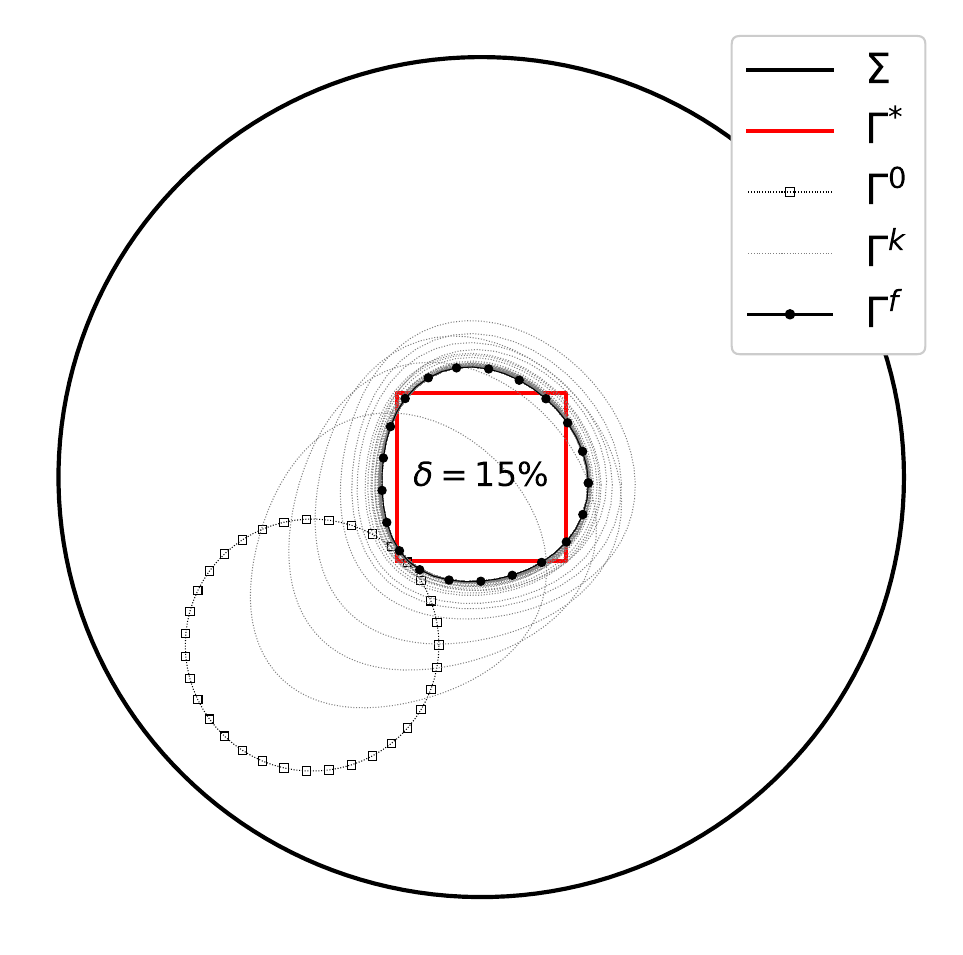}}
\caption{Influence of the size of the obstacle}
\label{fig:influence_of_size}
\end{figure}
%-----------------------------------------------------------------------------------------------------------------------------------------------
%-----------------------------------------------------------------------------------------------------------------------------------------------
\subsubsection{An obstacle with reentrant corner}
We also tested our method for reconstructing an obstacle with a reentrant corner, such as an \textsf{L}-block (domain see Figure \ref{fig:L-shape}). 
As anticipated, the reconstruction is much worse compared to obstacles with smooth shapes and concavities. 
However, it is noteworthy that the reconstruction accuracy can be improved by choosing a good initial guess, as shown in Figure \ref{fig:L-shape}.
{We emphasize that, although the test examples violate the regularity assumption, it appears that the algorithm was still able to generate fair reconstruction of the type of inclusion considered.}
%-----------------------------------------------------------------------------------------------------------------------------------------------
\begin{figure}[htp!]
\centering
\resizebox{0.32\textwidth}{!}{\includegraphics{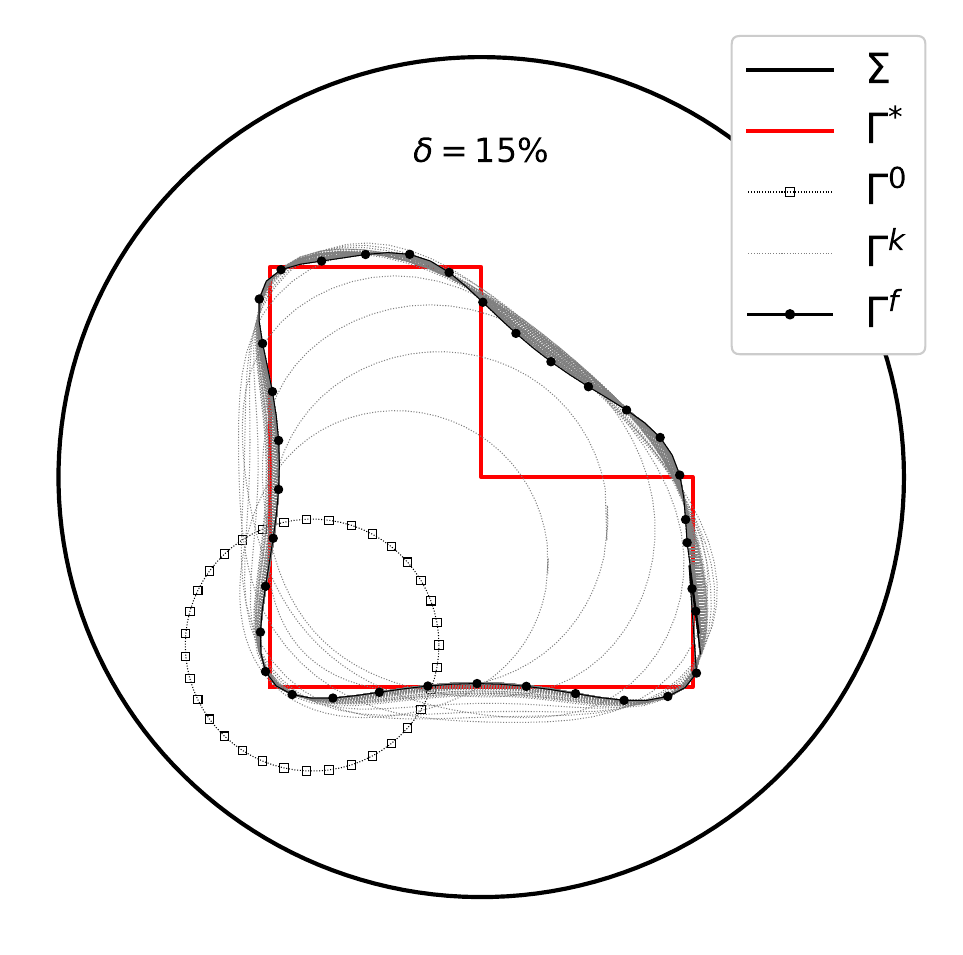}}\hfill
\resizebox{0.32\textwidth}{!}{\includegraphics{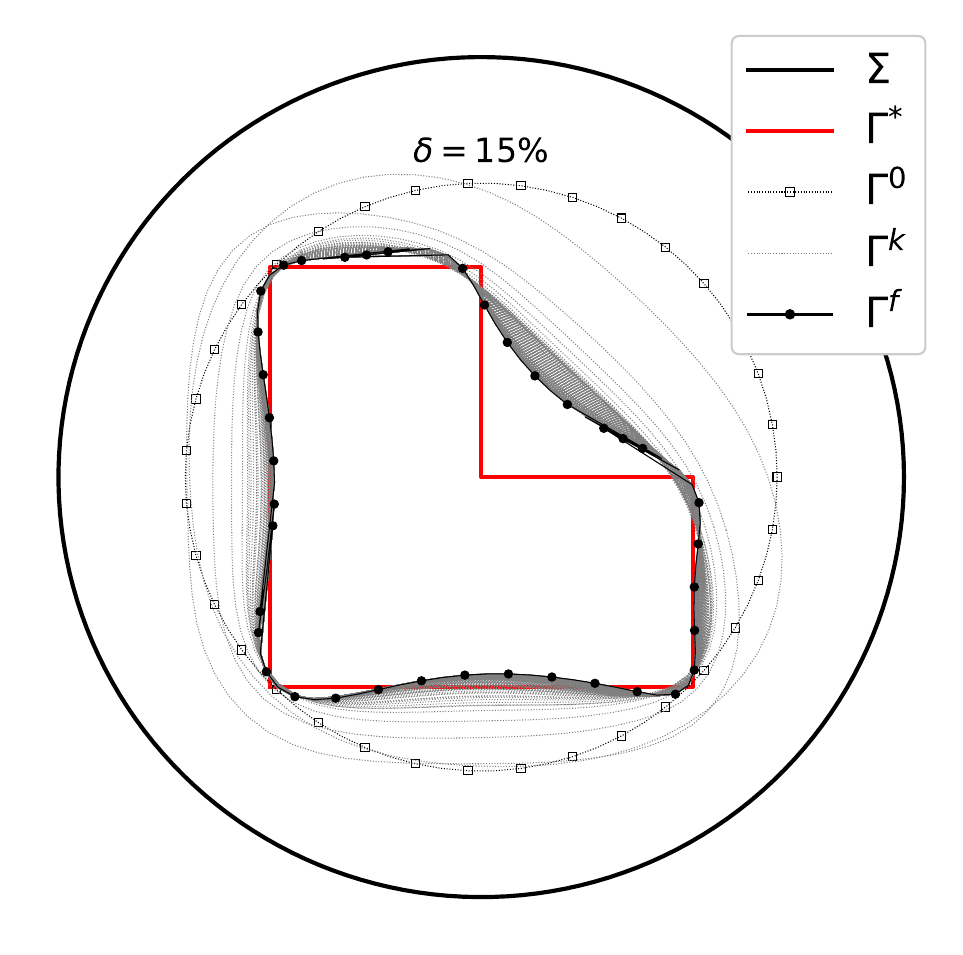}}\hfill
\resizebox{0.32\textwidth}{!}{\includegraphics{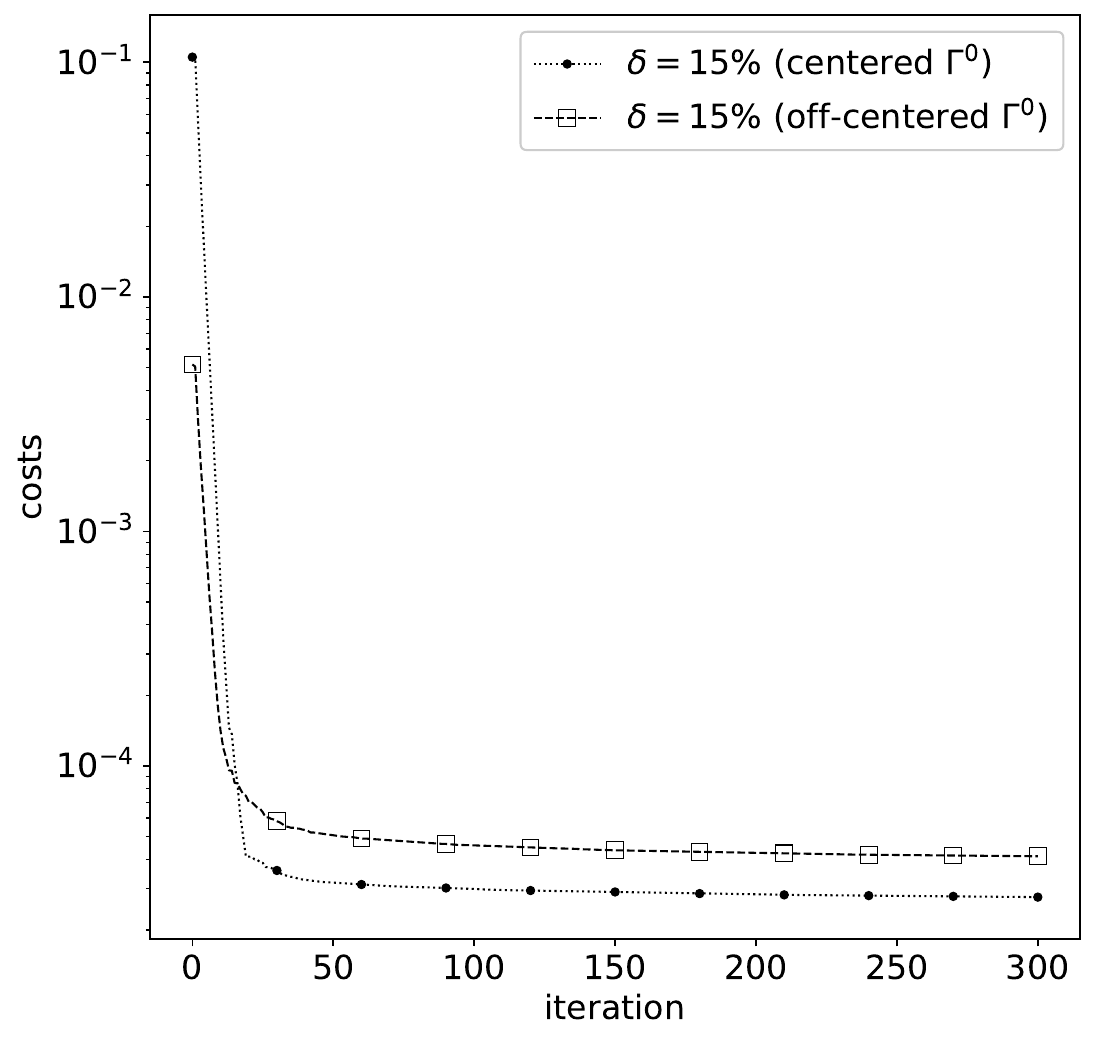}}\hfill
\caption{\textsf{L}-shape obstacle with noisy data ($\delta = 15\%$)}
\label{fig:L-shape}
\end{figure}
%-----------------------------------------------------------------------------------------------------------------------------------------------

%-----------------------------------------------------------------------------------------------------------------------------------------------
\subsubsection{Detecting more than one object}
Let us note here that the main assumption about the domain is that $\Omega = D \setminus\overline{\omega}$ is connected. 
This assumption does not exclude the case of two or more inclusions in $D$. 
Thus, we aim to numerically detect two objects: a square and a circle with relatively small sizes. 
Here, we apply remeshing every ten iterations to further avoid instabilities during mesh deformations. 
As evident in Figure \ref{fig:two_obstacles}, the reconstructions with data contaminated with a $30\%$ noise level are reasonable, and we point out that the computational times for the iteration processes in detecting multiple obstacles are comparable to the case of just one inclusion.
For instance, in the case of a small square centered at the origin shown in Figure \ref{fig:influence_of_size}, it only requires 291 seconds to complete 300 iterations, while in the present test case, the average computational time is 300 seconds.
\begin{remark}
In \cite{CaubetDambrineKatebTimimoun2013}, the authors noted longer computational times for detecting multiple obstacles compared to a single object -- a contrast to our method. Through remeshing at selected iterates, we achieve fast and fairly accurate reconstructions of obstacles, even in high-noise conditions, thanks to our use of coarser meshes and lower-degree finite element methods.
\end{remark}
%-----------------------------------------------------------------------------------------------------------------------------------------------
\begin{figure}[htp!]
\centering\hfill
\resizebox{0.32\textwidth}{!}{\includegraphics{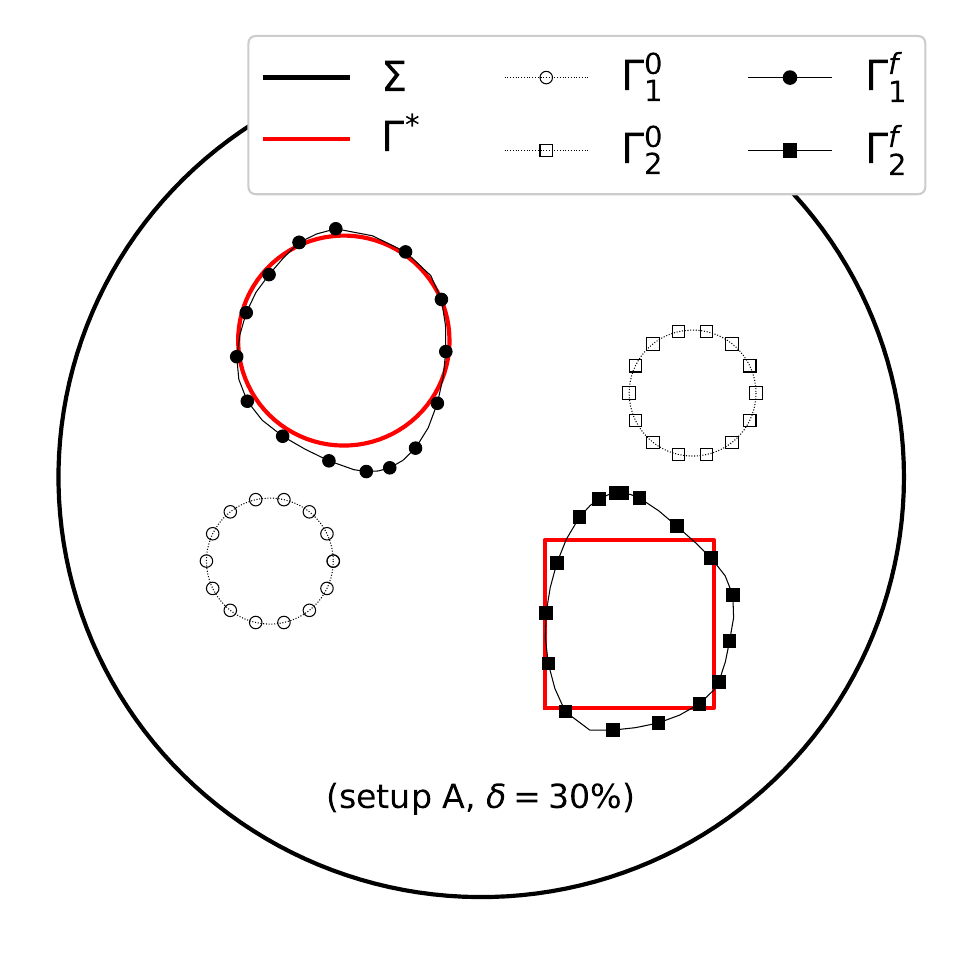}}\hfill
\resizebox{0.32\textwidth}{!}{\includegraphics{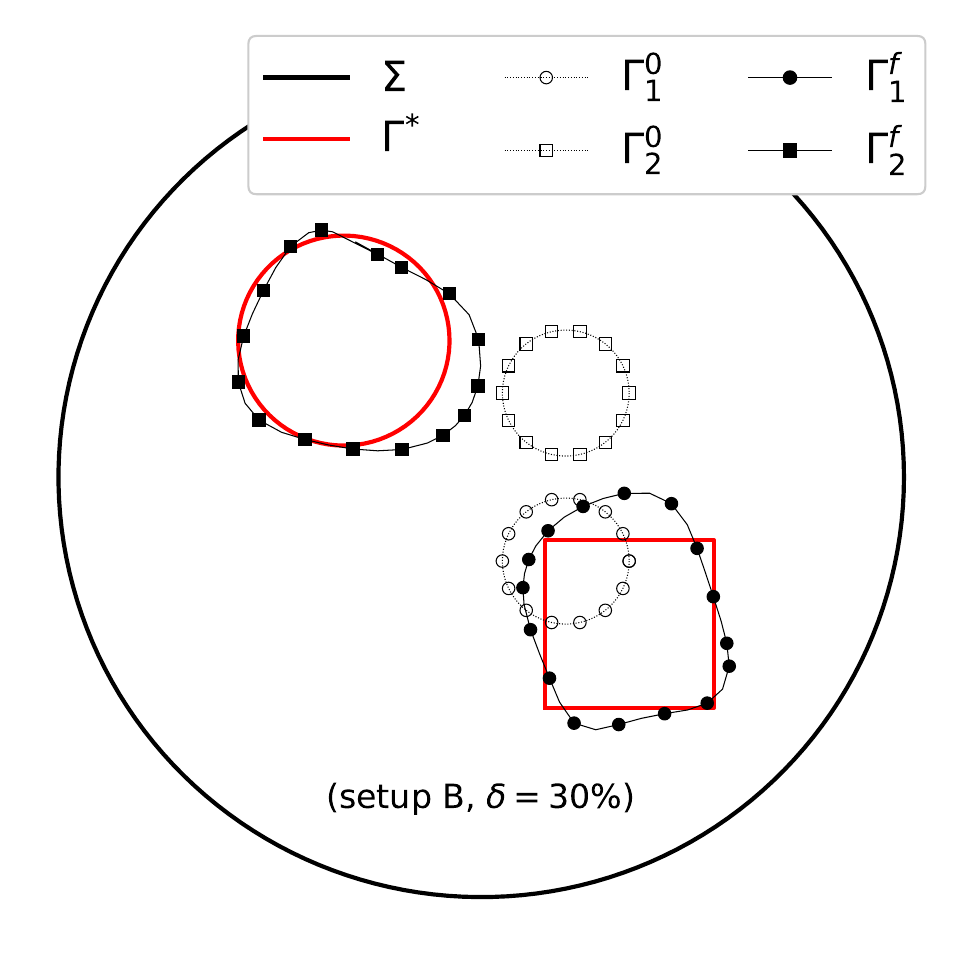}}\hfill
\resizebox{0.32\textwidth}{!}{\includegraphics{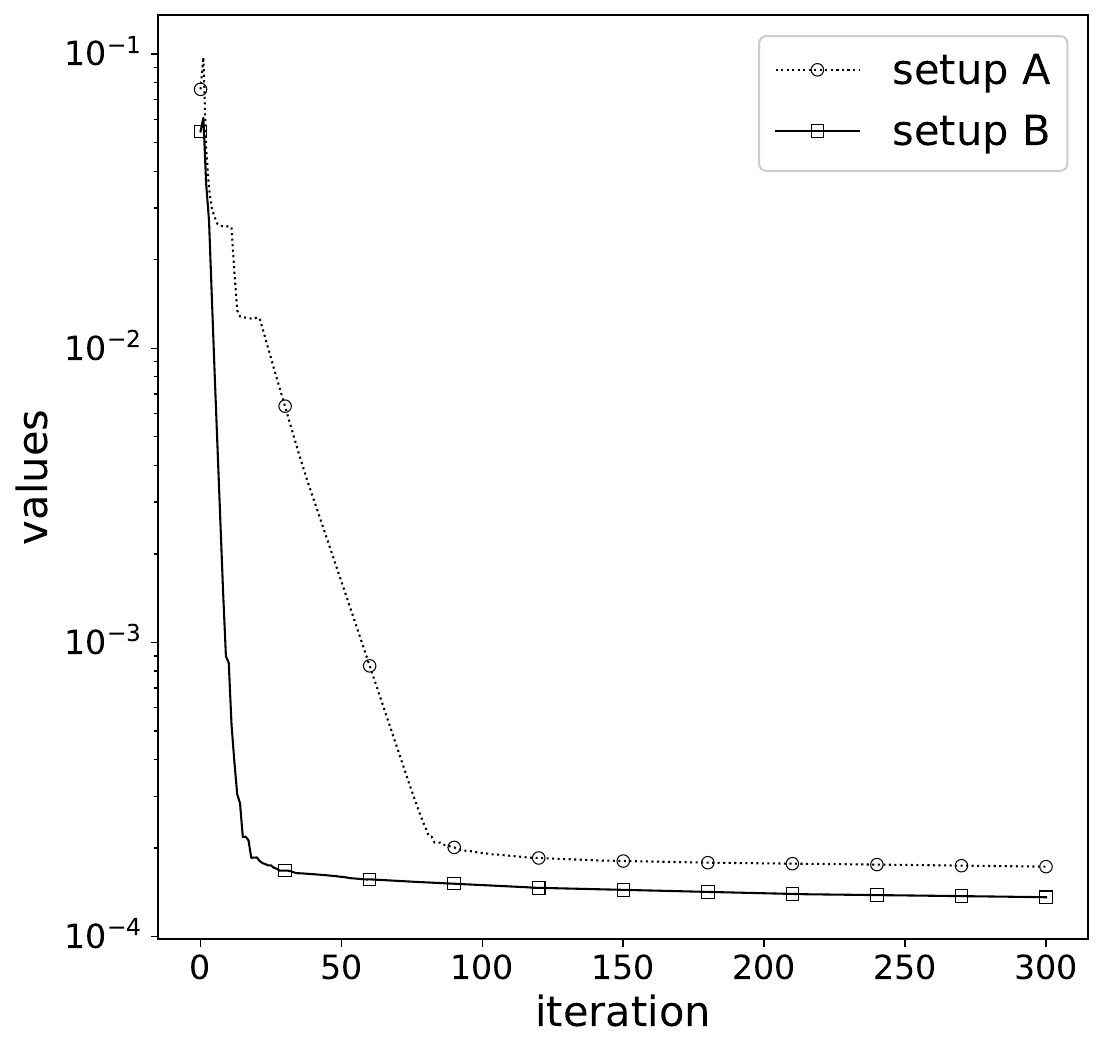}}\hfill
\caption{Detecting more than one object with noisy data ($\delta = 30\%$)}
\label{fig:two_obstacles}
\end{figure}
%-----------------------------------------------------------------------------------------------------------------------------------------------
%
%
\begin{remark}
Obviously, the Lagrangian method adopted here to detect objects does not permit the modification of the topology of the object $\overline{\omega}$. 
Thus, in order to detect two or more inclusions, we need to know how many objects are included in $D$. 
One solution could be to initialize the algorithm using the notion of topological gradient (see, for example, \cite{BenAbdaetal2009,GarreauGuillaumeMasmoudi2001,SokolowskiZochowski1999}), which could provide us with the number of inclusions and their rough locations, offering initial shapes for our optimization method.
\end{remark}
%-----------------------------------------------------------------------------------------------------------------------------------------------
%%%%%%%%%%%%%%%%%%%%%%%%%%%%%%%%%%%%%%%%%%%%%%%%%%%%%%%%%%%%%%%%%%
%%%%%%%%%%%%%%%%%%%%%%%%%%%%%%%%%%%%%%%%%%%%%%%%%%%%%%%%%%%%%%%%%%
\subsection{Numerical tests in 3D}
In our investigation, we extend the application of our proposed method to 3D cases. 
The computational setup closely mirrors that employed in 2D settings. 
Specifically, the forward problems are solved using tetrahedra characterized by smaller mesh widths and volumes, while larger ones are used in the inversion process.
Furthermore, the finite elements utilized differ between the forward and inverse problems. 
Specifically, $P3/P2$ finite elements are employed in the context of the forward problems, whereas $P2/P1$ finite elements are used for the inverse problems.
In the subsequent numerical experiments, we prescribe the input data as $\bgg = [1, 1, 1]$ and set $\mu = 1$ during the step size computation. 
The maximum number of iterations is set to $30$ for the first two examples, while it is set to $100$ for the third example. 
We do not employ adaptive remeshing to reduce the computational cost associated with the approximation.
%%%%%%%%%%%%%%%%%%%%%%%%%%%%%%%%%%%%%%%%%%%%%%%%%%%%%%%%%%%%%%%%%%
%%%%%%%%%%%%%%%%%%%%%%%%%%%%%%%%%%%%%%%%%%%%%%%%%%%%%%%%%%%%%%%%%%
\subsubsection{Large convex obstacle} For large convex shapes, such as an elliptical obstacle shown in Figure \ref{fig:large_ellipsoid}, the method works perfectly even in the presence of a high level of noise (e.g., $\delta = 30\%$). 
As an illustration, the plot shown in Figure \ref{fig:large_ellipse_approximation} is a reconstruction of the exact shape in Figure \ref{fig:large_ellipsoid} with an initial guess given by a sphere of radius $0.65$.
%-----------------------------------------------------------------------------------------------------------------------------------------------
\begin{figure}[htp!]
\centering
\resizebox{0.235\textwidth}{!}{\includegraphics{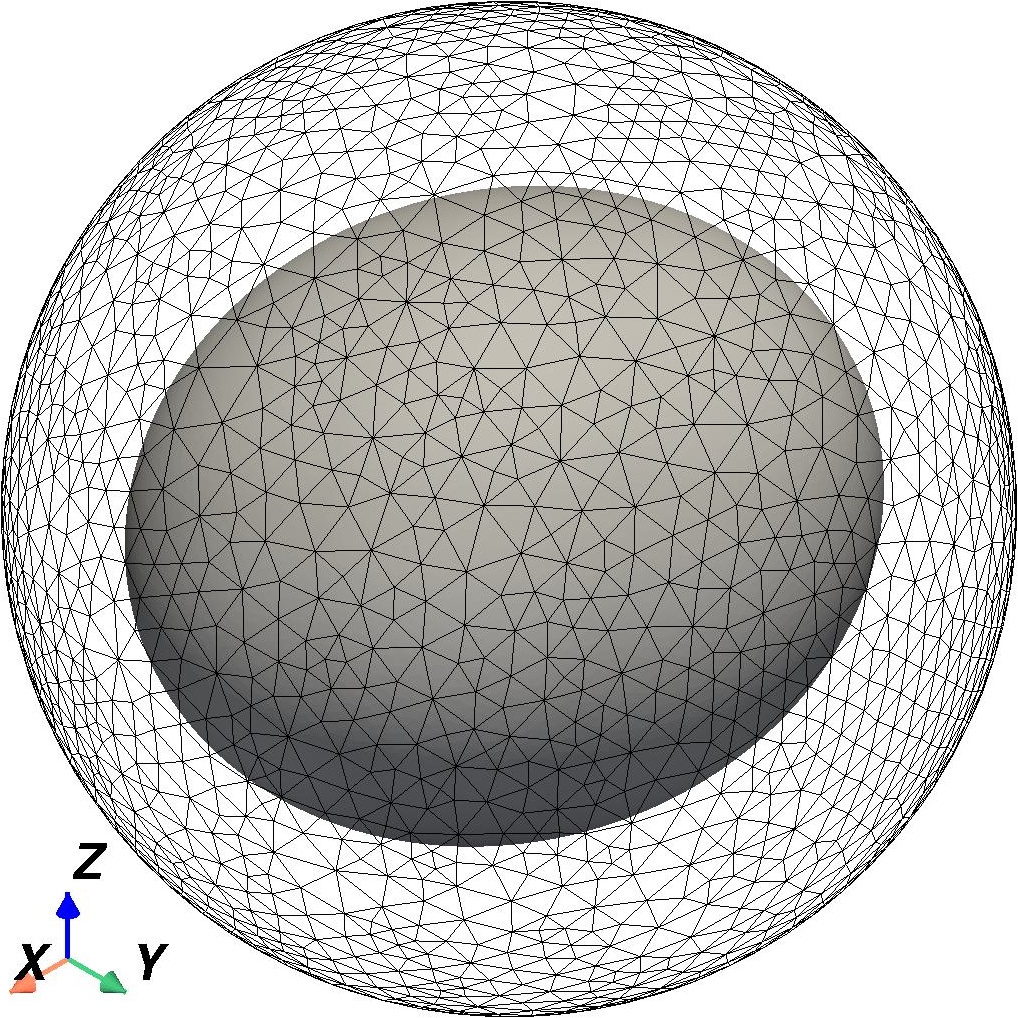}} \hfill  
\resizebox{0.235\textwidth}{!}{\includegraphics{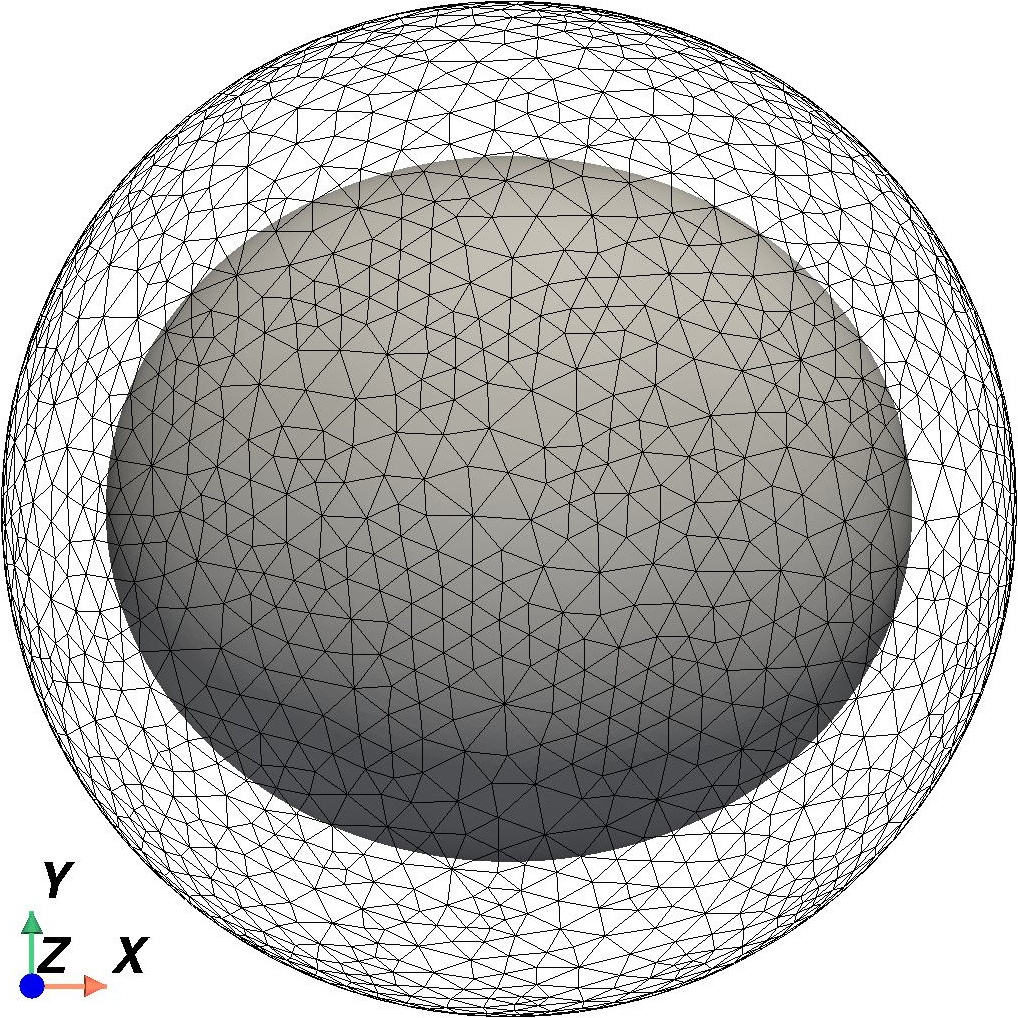}} \hfill  
\resizebox{0.235\textwidth}{!}{\includegraphics{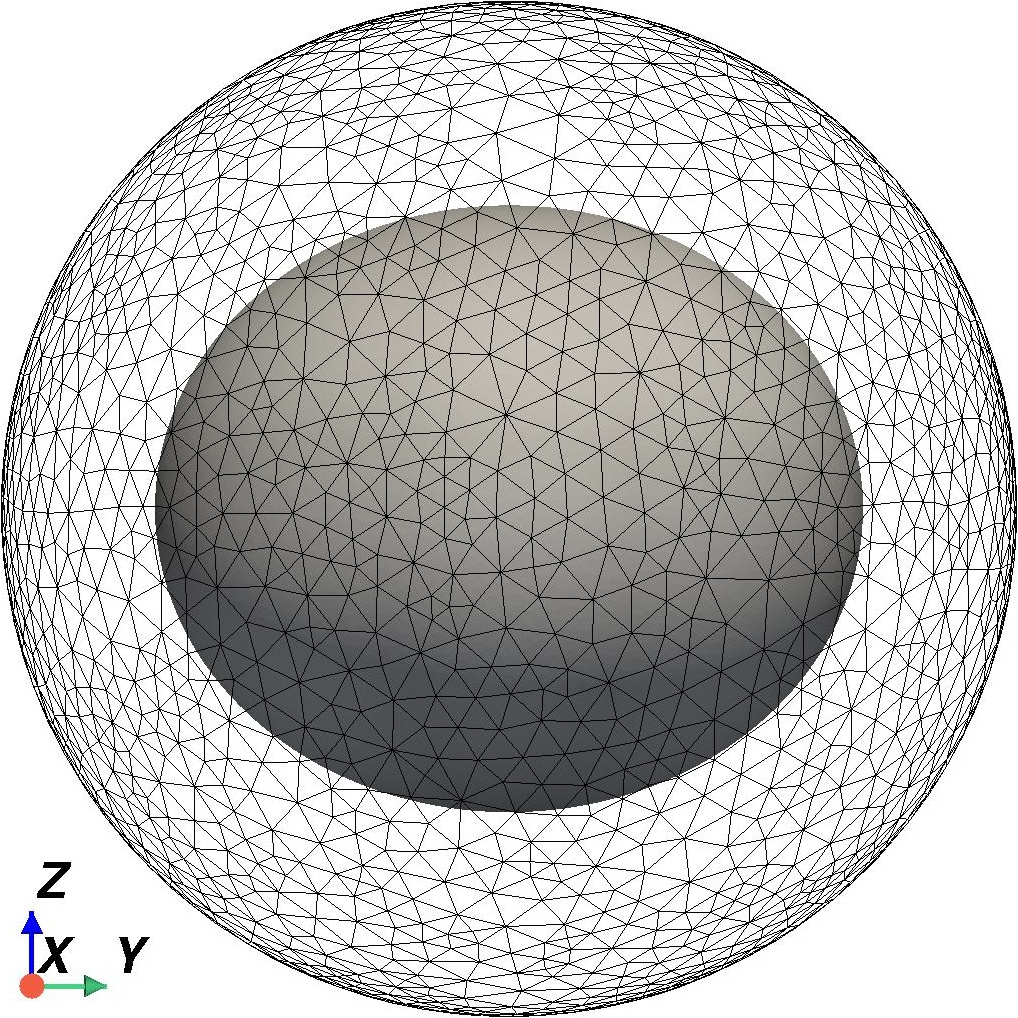}} \hfill  
\resizebox{0.235\textwidth}{!}{\includegraphics{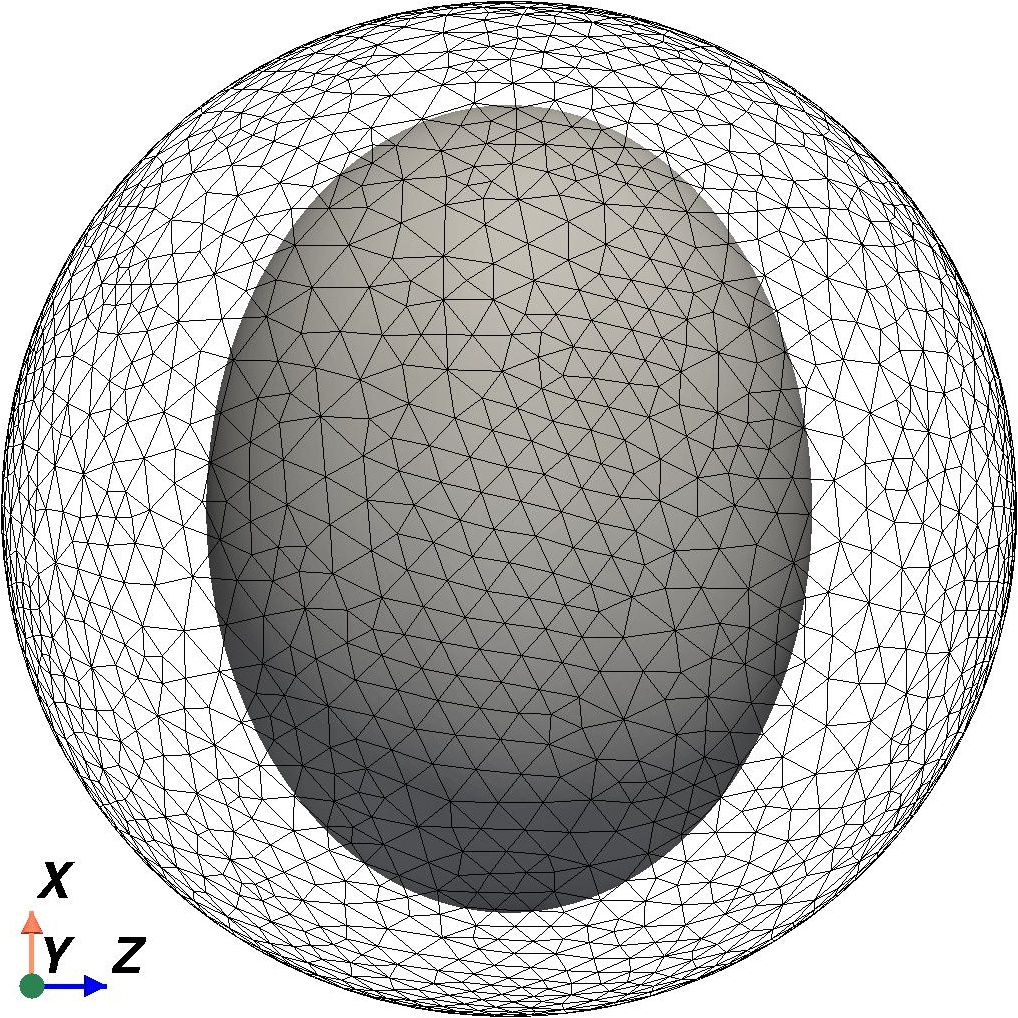}} \hfill  
\caption{Exact ellipsoidal shape obstacle}
\label{fig:large_ellipsoid}
\end{figure}
%-----------------------------------------------------------------------------------------------------------------------------------------------
%-----------------------------------------------------------------------------------------------------------------------------------------------
\begin{figure}[htp!]
\centering
\resizebox{0.235\textwidth}{!}{\includegraphics{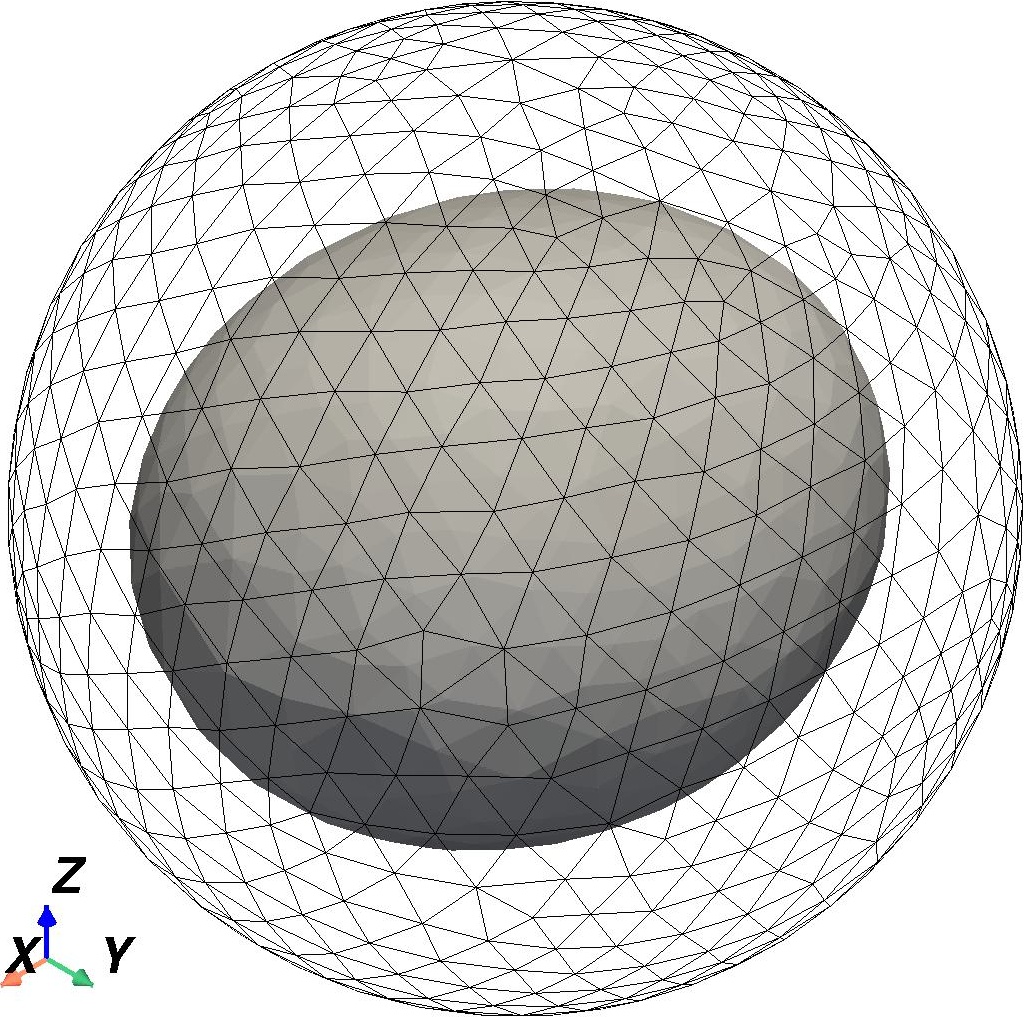}} \hfill  
\resizebox{0.235\textwidth}{!}{\includegraphics{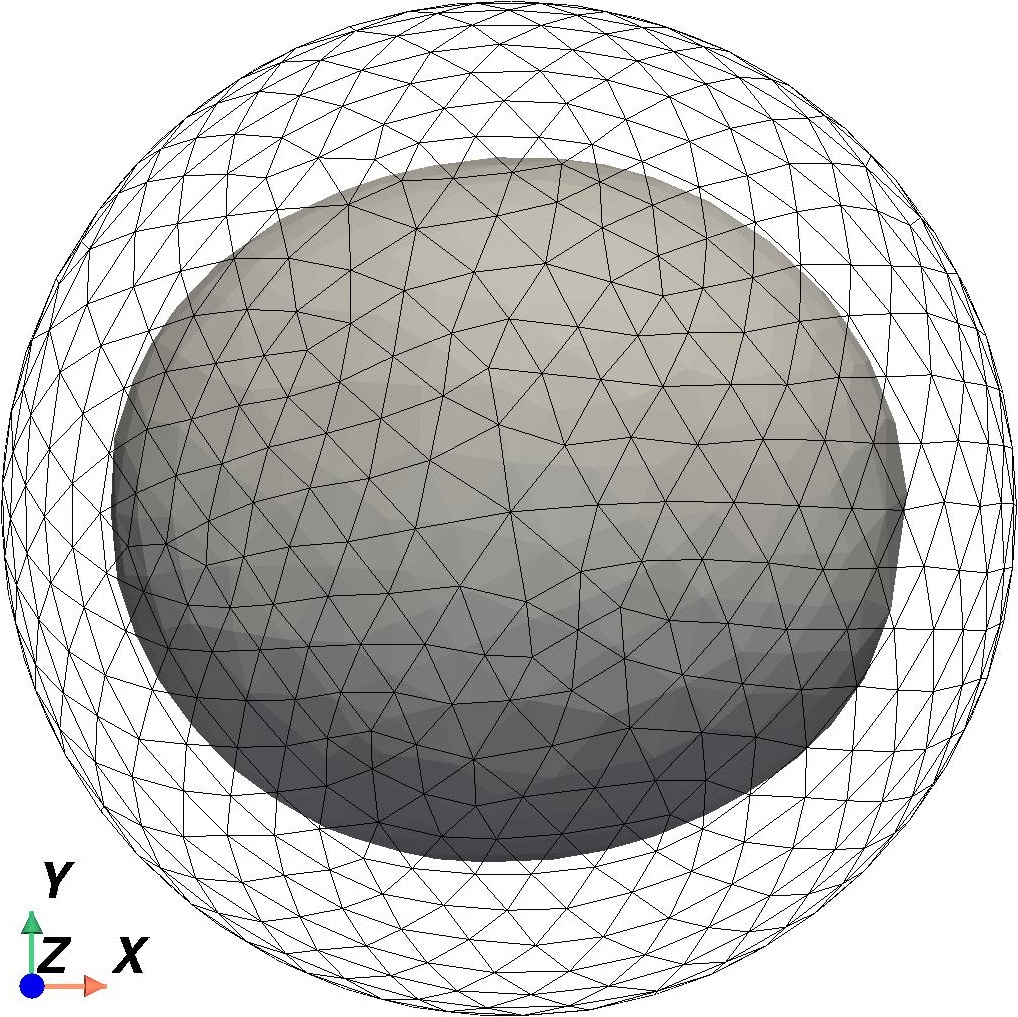}} \hfill  
\resizebox{0.235\textwidth}{!}{\includegraphics{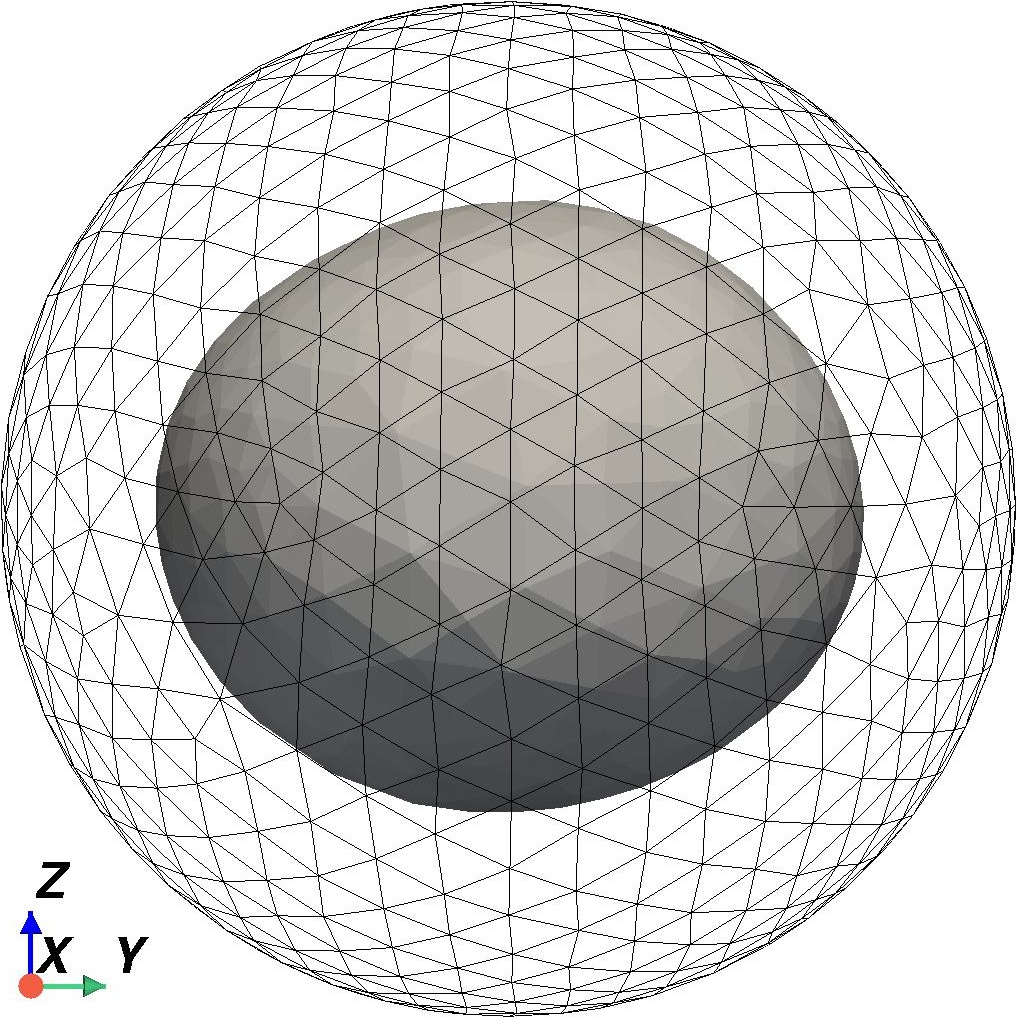}} \hfill  
\resizebox{0.235\textwidth}{!}{\includegraphics{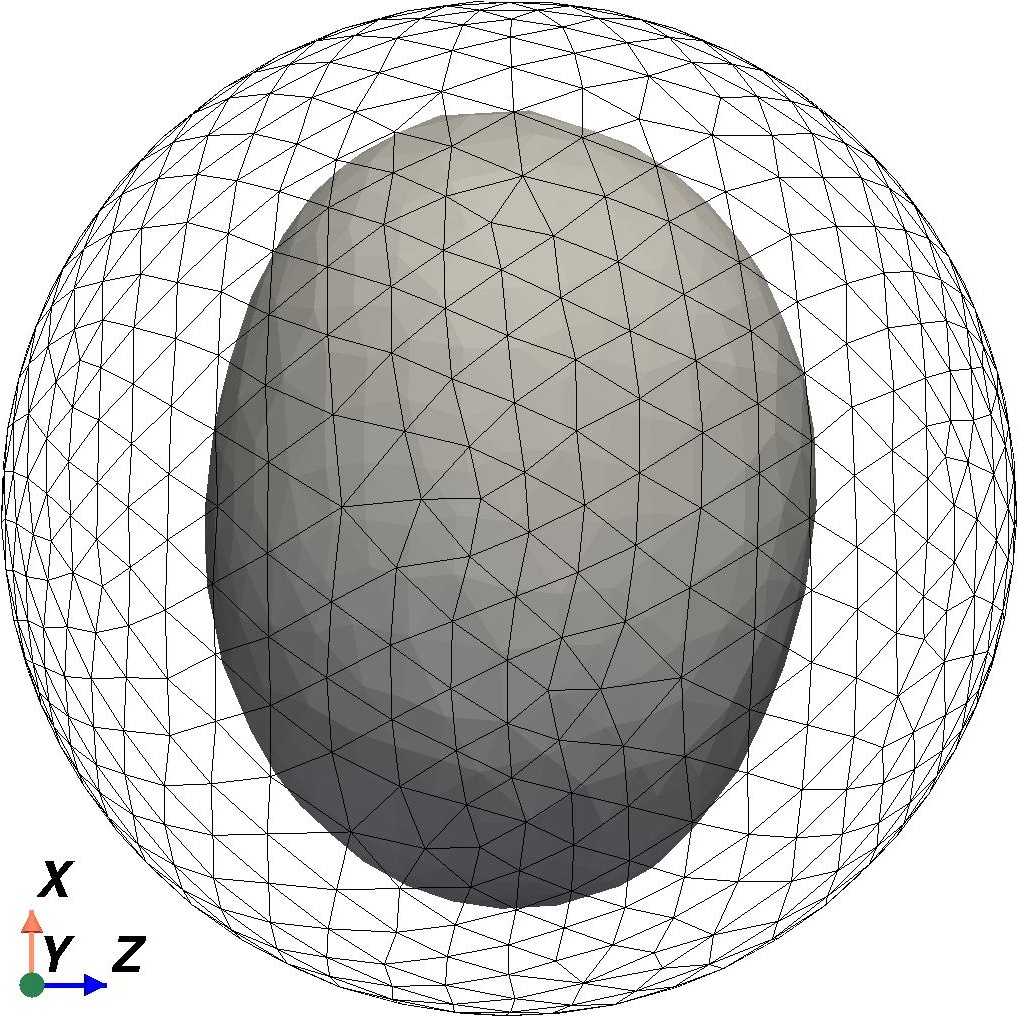}} \hfill  
\caption{Identified shape with noisy data ($\delta = 30\%$)}
\label{fig:large_ellipse_approximation}
\end{figure}
%-----------------------------------------------------------------------------------------------------------------------------------------------
%%%%%%%%%%%%%%%%%%%%%%%%%%%%%%%%%%%%%%%%%%%%%%%%%%%%%%%%%%%%%%%%%%
%%%%%%%%%%%%%%%%%%%%%%%%%%%%%%%%%%%%%%%%%%%%%%%%%%%%%%%%%%%%%%%%%%
\subsubsection{Large non-convex obstacle} In the case of non-convex obstacles, such as a star-shaped obstacle depicted in Figure \ref{fig:large_star}, achieving accurate reconstructions appears to be more challenging, as expected. 
For instance, in Figure \ref{fig:large_star_approximation}, we can only achieve a reasonably accurate reconstruction of the precise geometry of the obstacle depicted in Figure \ref{fig:large_star} at high noise levels with $\delta = 30\%$ (and an initial guess given by a sphere of radius 0.65.). 
Nevertheless, we are able to identify some concavities within the shape. 
Based on our experience, it seems that regions with pronounced concavities are easier to reconstruct, while sharper or less pronounced concavities, especially those distant from the measurement region, prove difficult to detect.

%-----------------------------------------------------------------------------------------------------------------------------------------------
\begin{figure}[htp!]
\centering
\resizebox{0.235\textwidth}{!}{\includegraphics{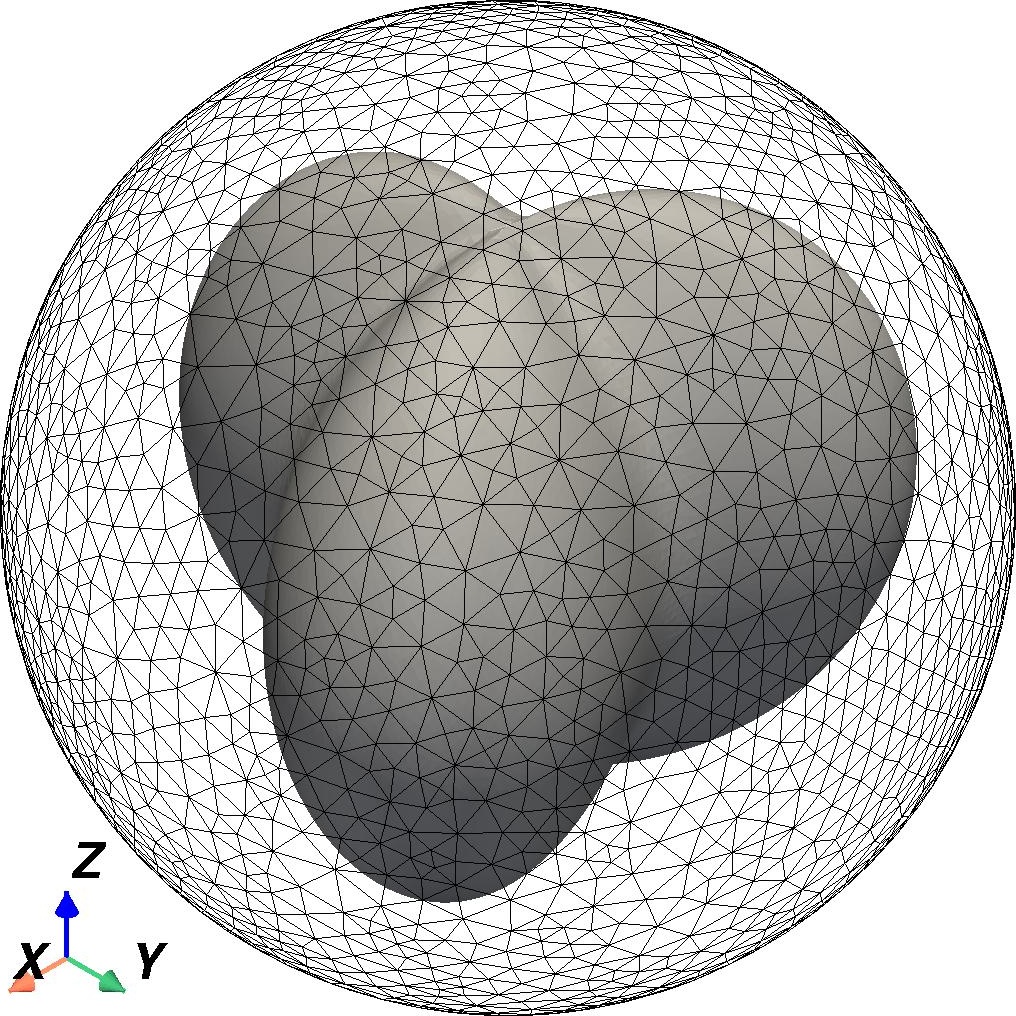}} \hfill  
\resizebox{0.235\textwidth}{!}{\includegraphics{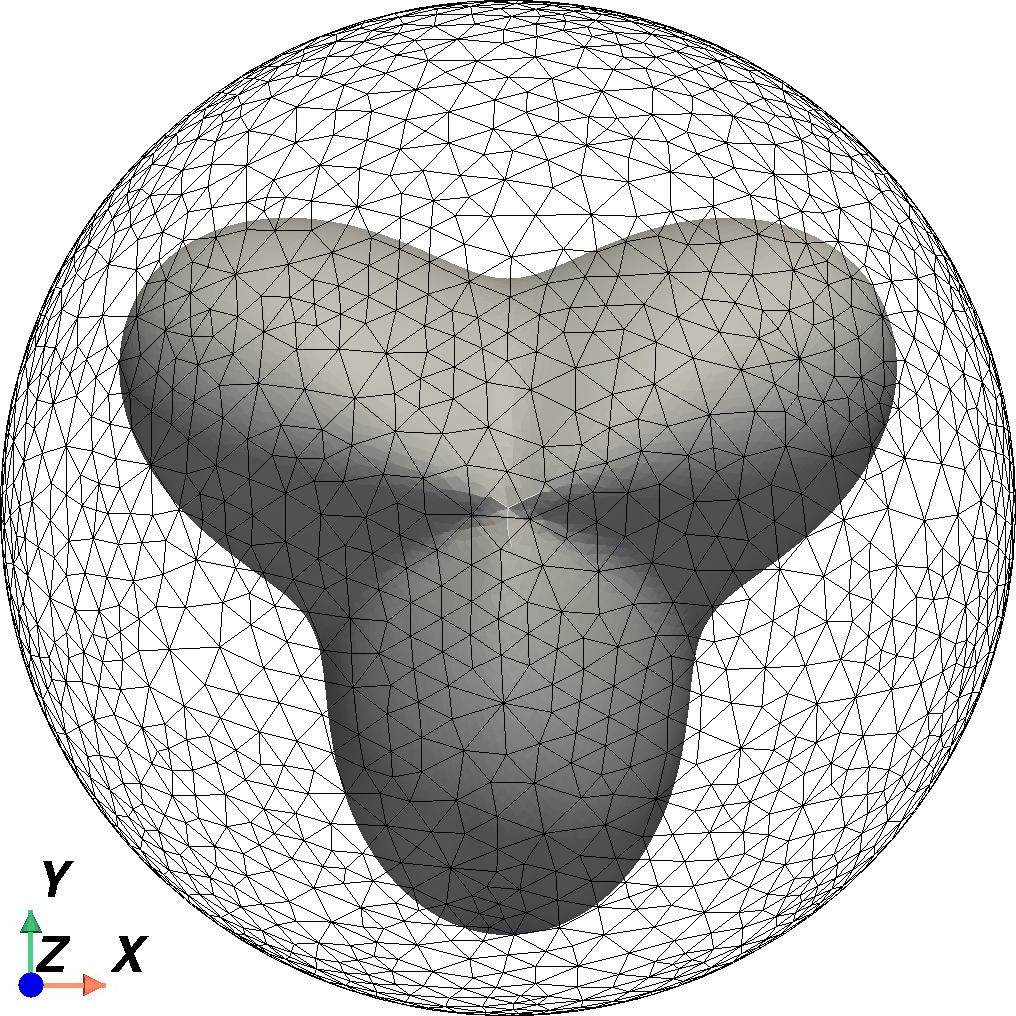}} \hfill  
\resizebox{0.235\textwidth}{!}{\includegraphics{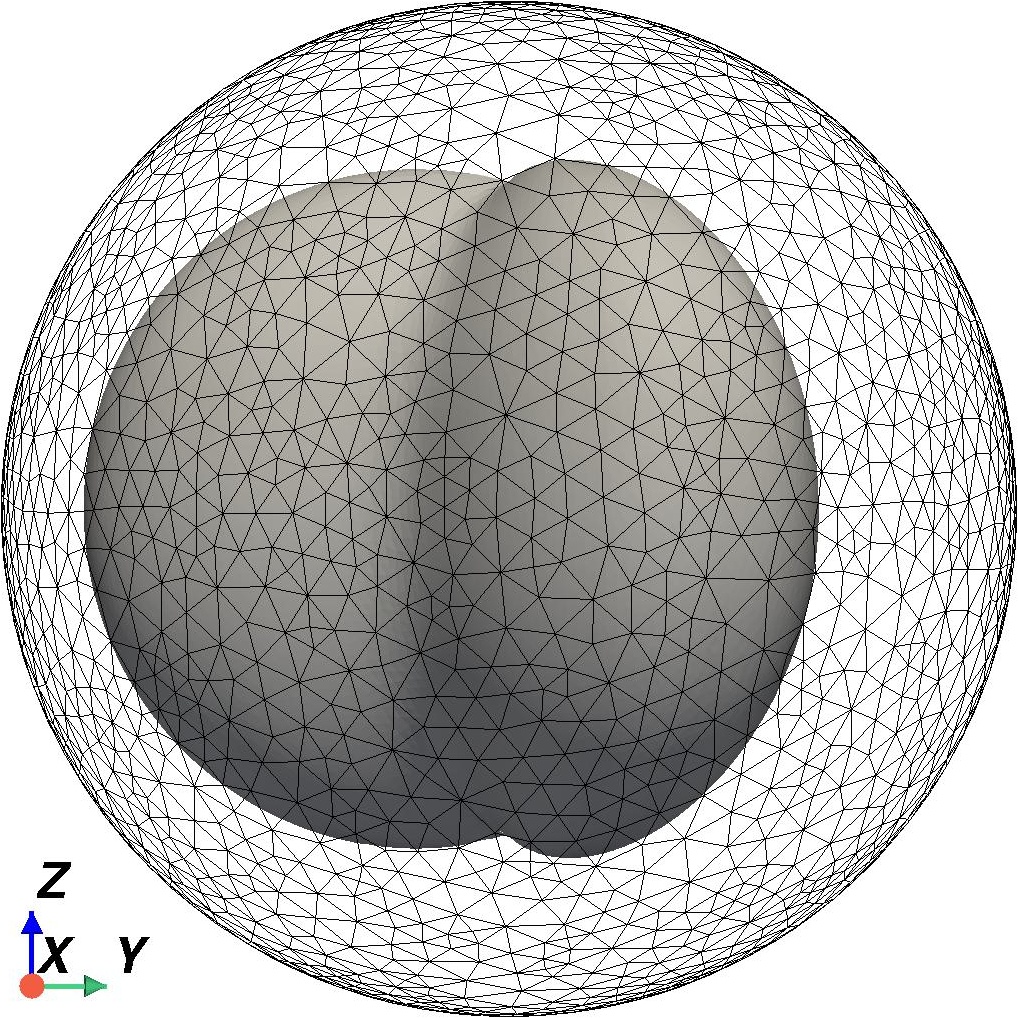}} \hfill  
\resizebox{0.235\textwidth}{!}{\includegraphics{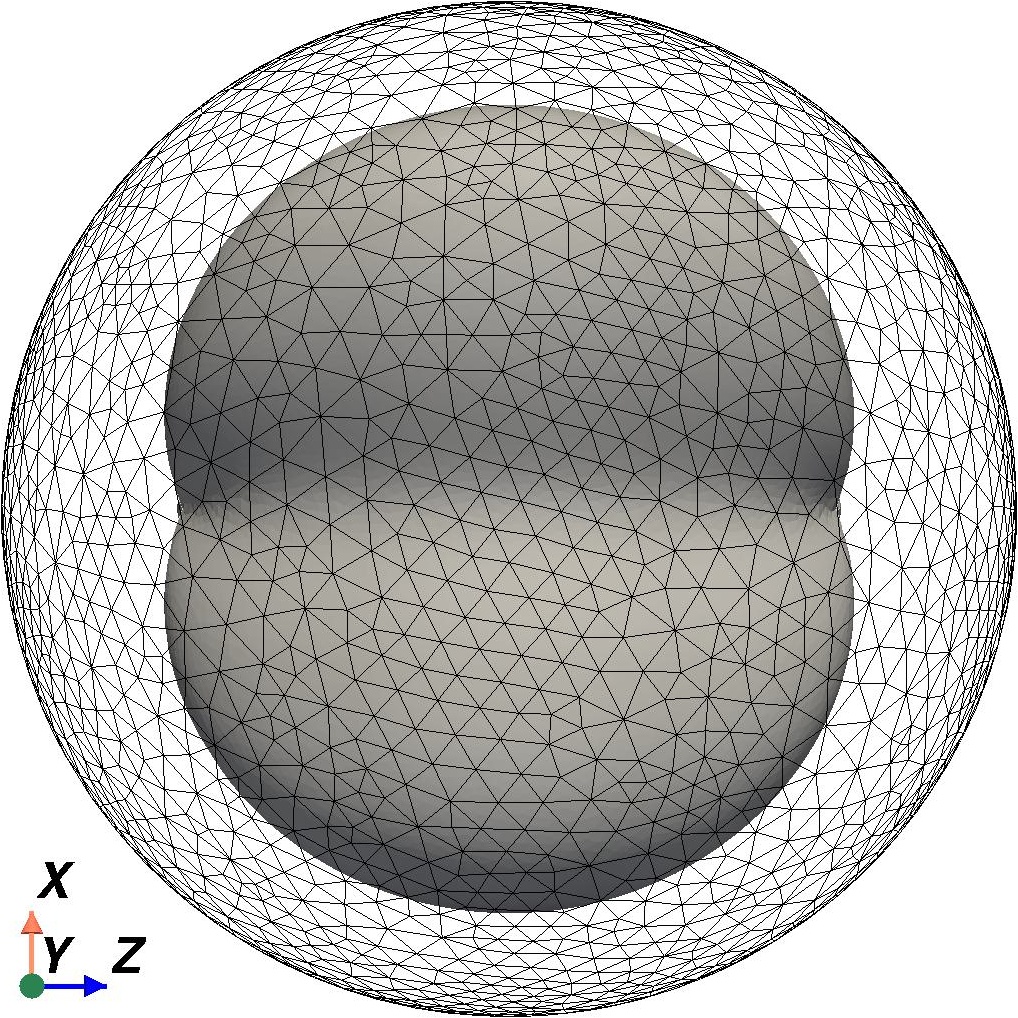}} \hfill  
\caption{Exact star-shape obstacle}
\label{fig:large_star}
\end{figure}
%-----------------------------------------------------------------------------------------------------------------------------------------------
%-----------------------------------------------------------------------------------------------------------------------------------------------
\begin{figure}[htp!]
\centering
\resizebox{0.235\textwidth}{!}{\includegraphics{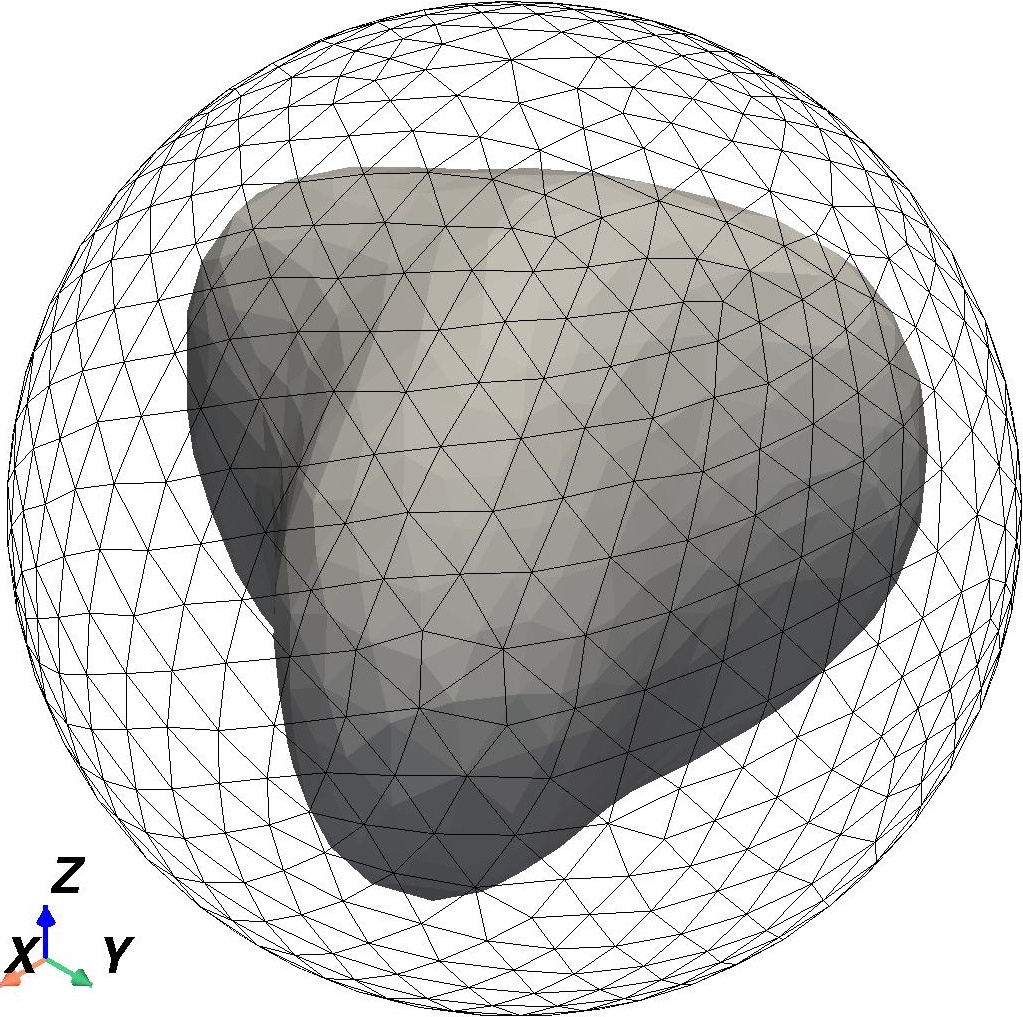}} \hfill  
\resizebox{0.235\textwidth}{!}{\includegraphics{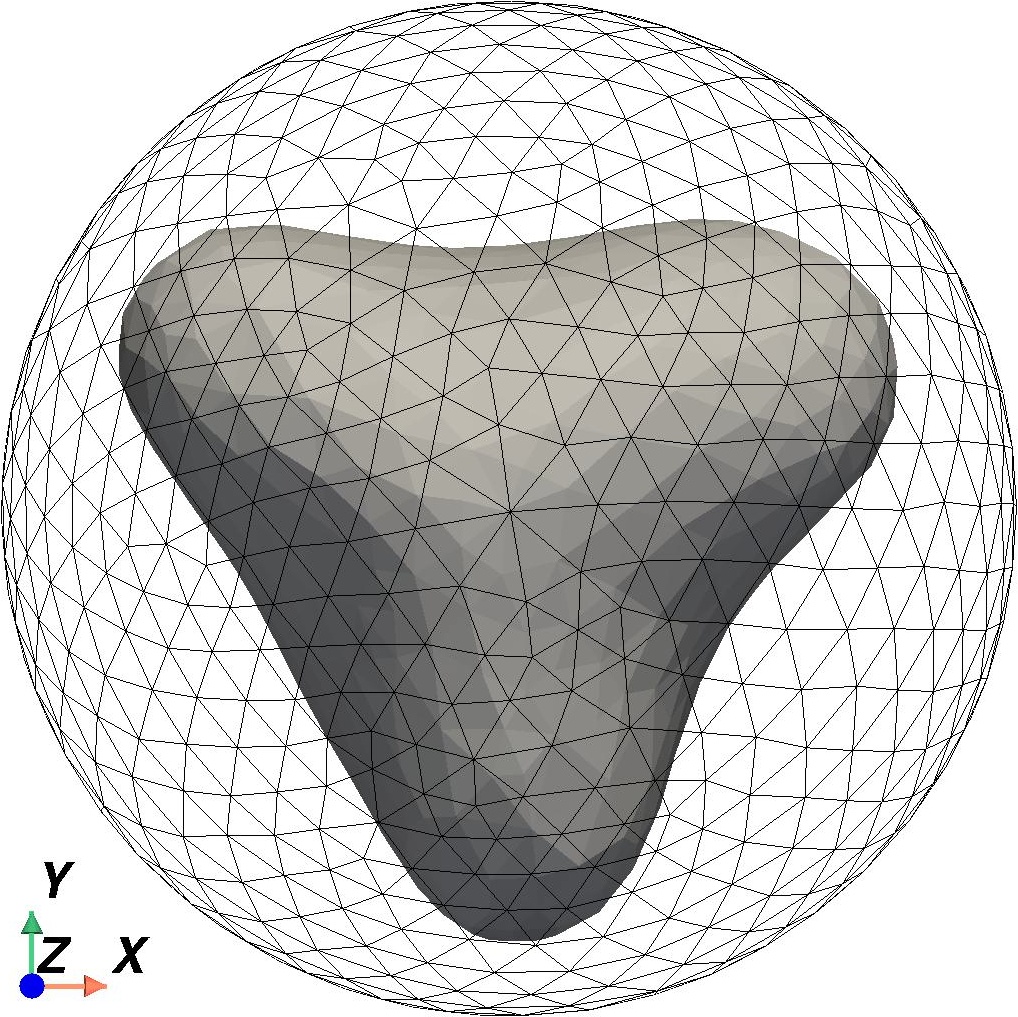}} \hfill  
\resizebox{0.235\textwidth}{!}{\includegraphics{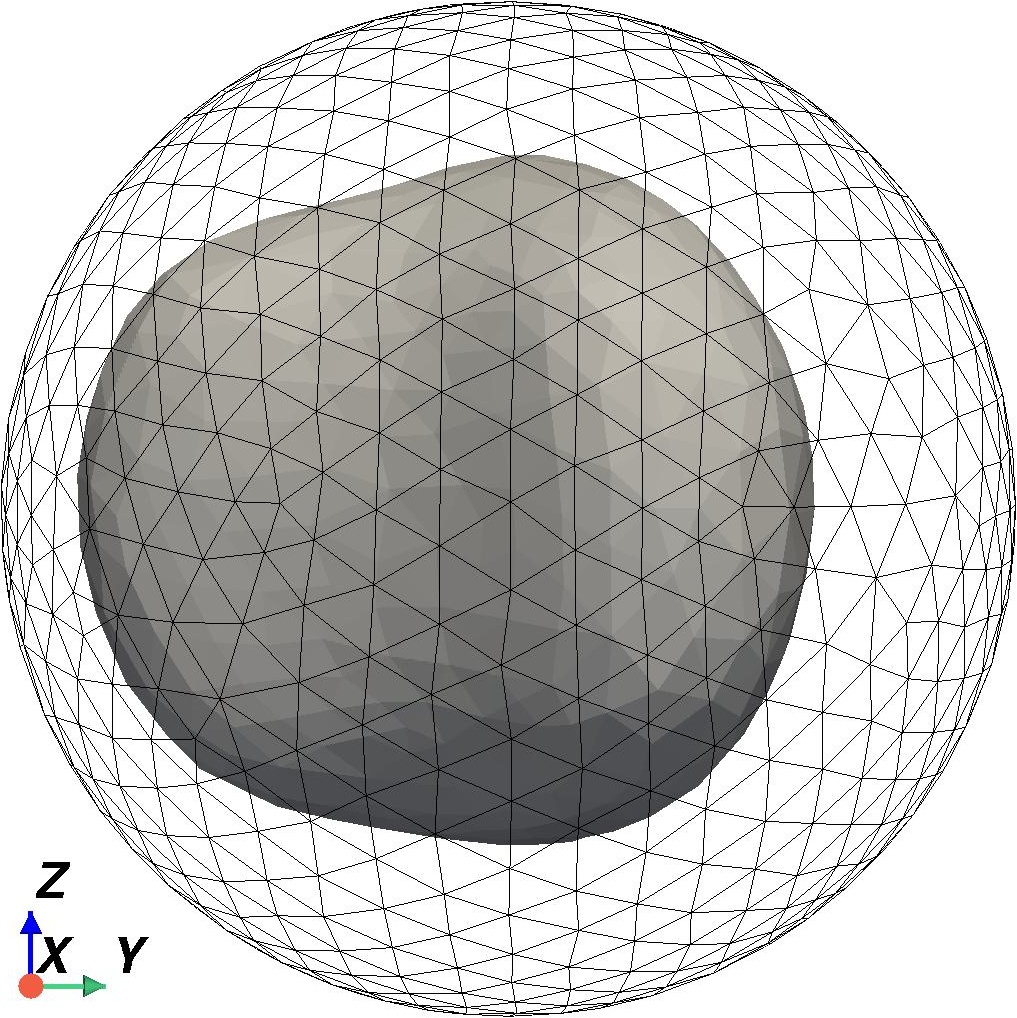}} \hfill  
\resizebox{0.235\textwidth}{!}{\includegraphics{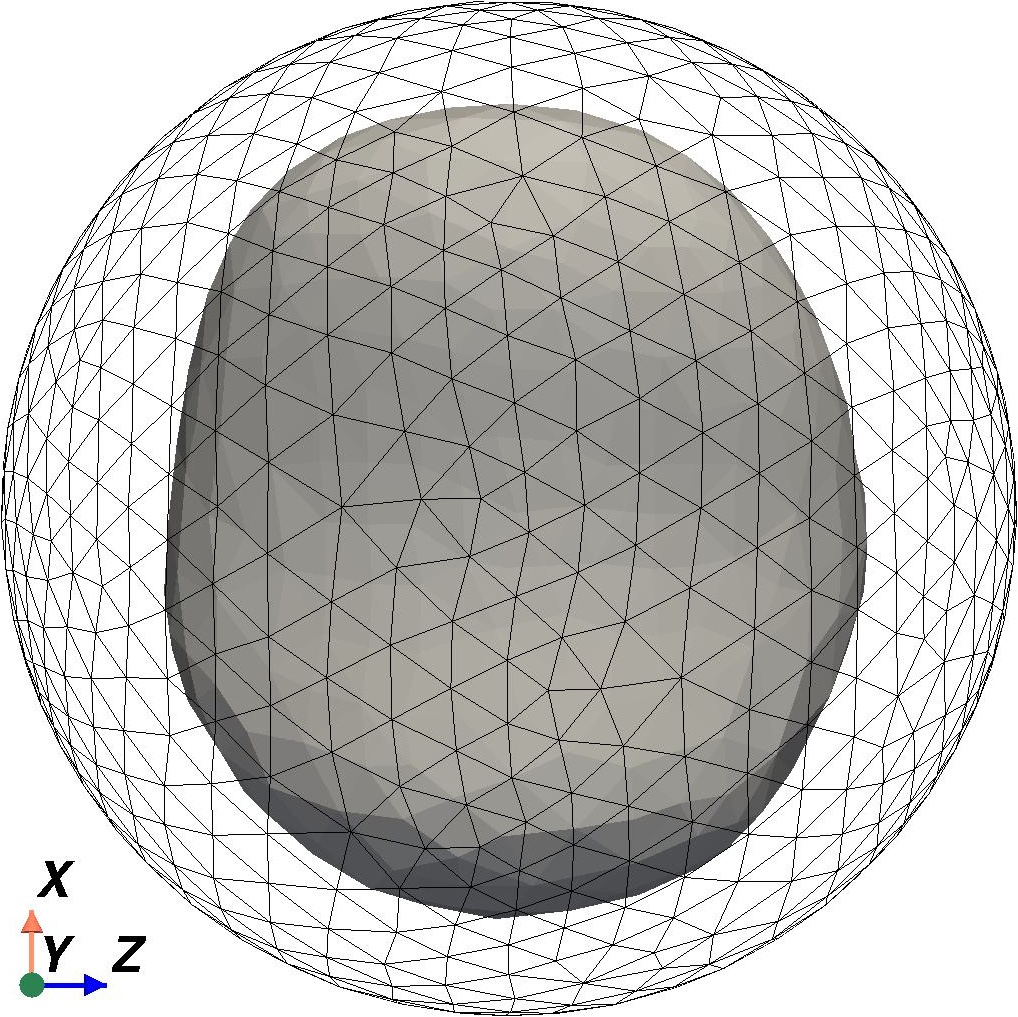}} \hfill  
\caption{Identified shape with noisy data ($\delta = 30\%$)}
\label{fig:large_star_approximation}
\end{figure}
%-----------------------------------------------------------------------------------------------------------------------------------------------
%%%%%%%%%%%%%%%%%%%%%%%%%%%%%%%%%%%%%%%%%%%%%%%%%%%%%%%%%%%%%%%%%%
%%%%%%%%%%%%%%%%%%%%%%%%%%%%%%%%%%%%%%%%%%%%%%%%%%%%%%%%%%%%%%%%%%
\subsubsection{Two small cubic-shaped obstacles} Finally, we evaluate our algorithm using two cubic-shaped obstacles, as illustrated in Figure \ref{fig:two_cubes}. 
Despite violating the regularity assumption on the unknown interior boundary, this test example demonstrates the algorithm's ability to generate a reasonable reconstruction of the inclusion type, even in the presence of high noise levels ($\delta = 30\%$), as depicted in Figure \ref{fig:two_cubes_approximation}. 
As expected, it is difficult to accurately reconstruct the edges of the cubic obstacles, see Figure \ref{fig:two_cubes_comparison}. 
Nevertheless, it correctly predicts the location of the two obstacles, reaching convergence in the cost function after approximately 60 iterations.
%-----------------------------------------------------------------------------------------------------------------------------------------------
\begin{figure}[htp!]
\centering
\resizebox{0.235\textwidth}{!}{\includegraphics{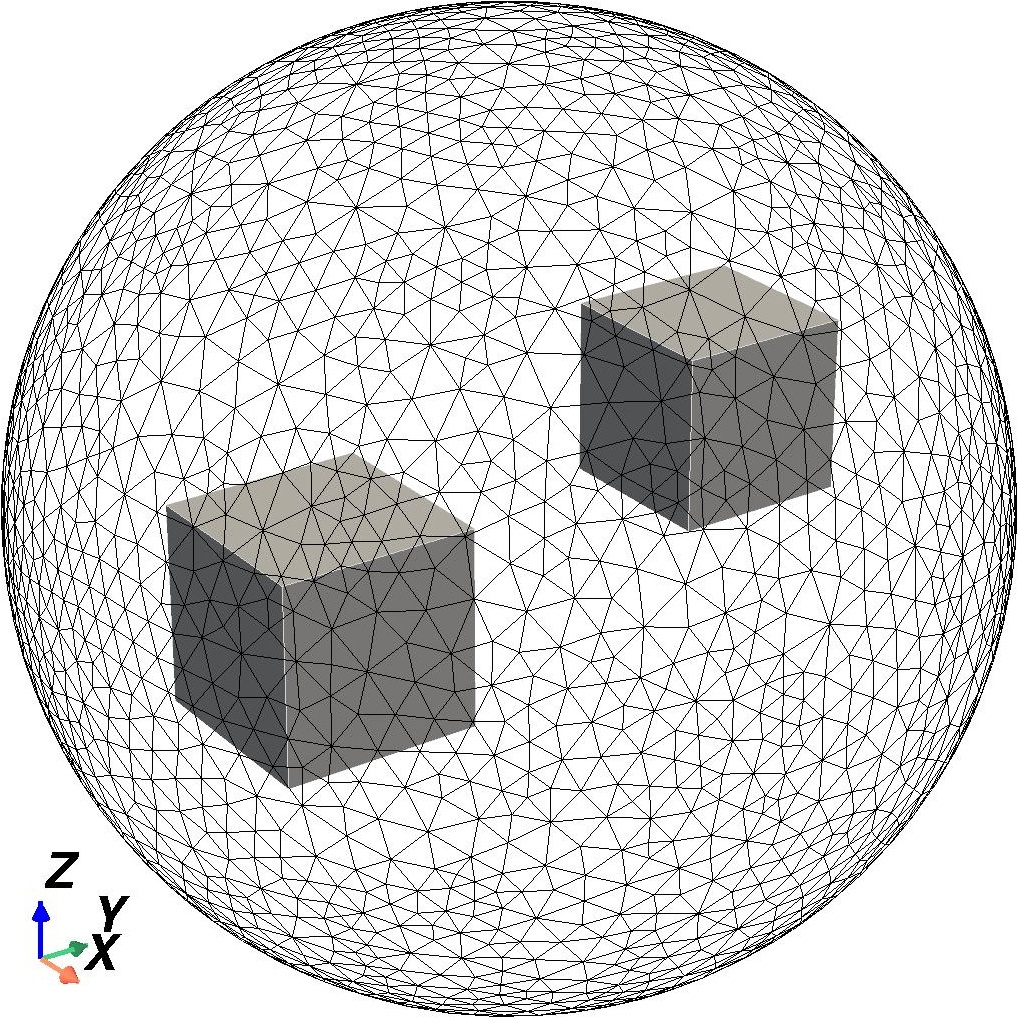}} \hfill  
\resizebox{0.235\textwidth}{!}{\includegraphics{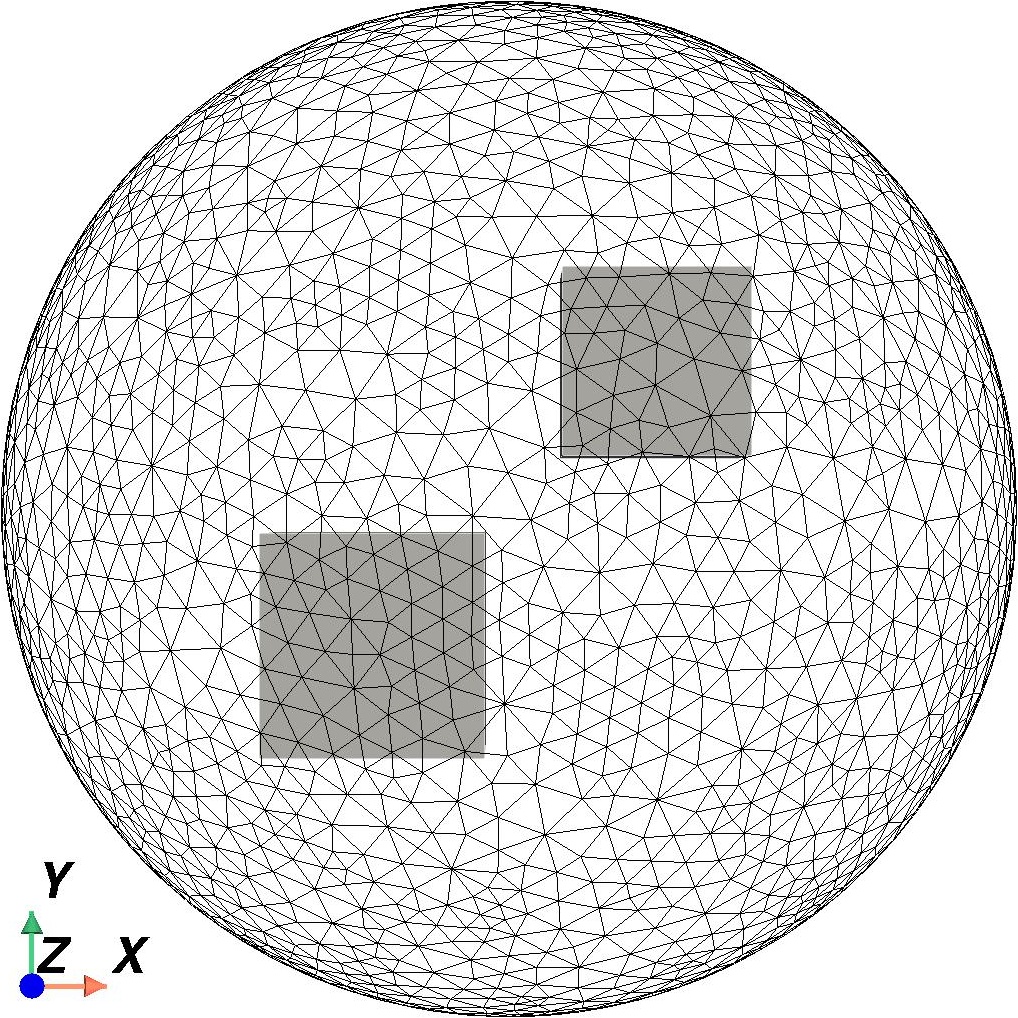}} \hfill  
\resizebox{0.235\textwidth}{!}{\includegraphics{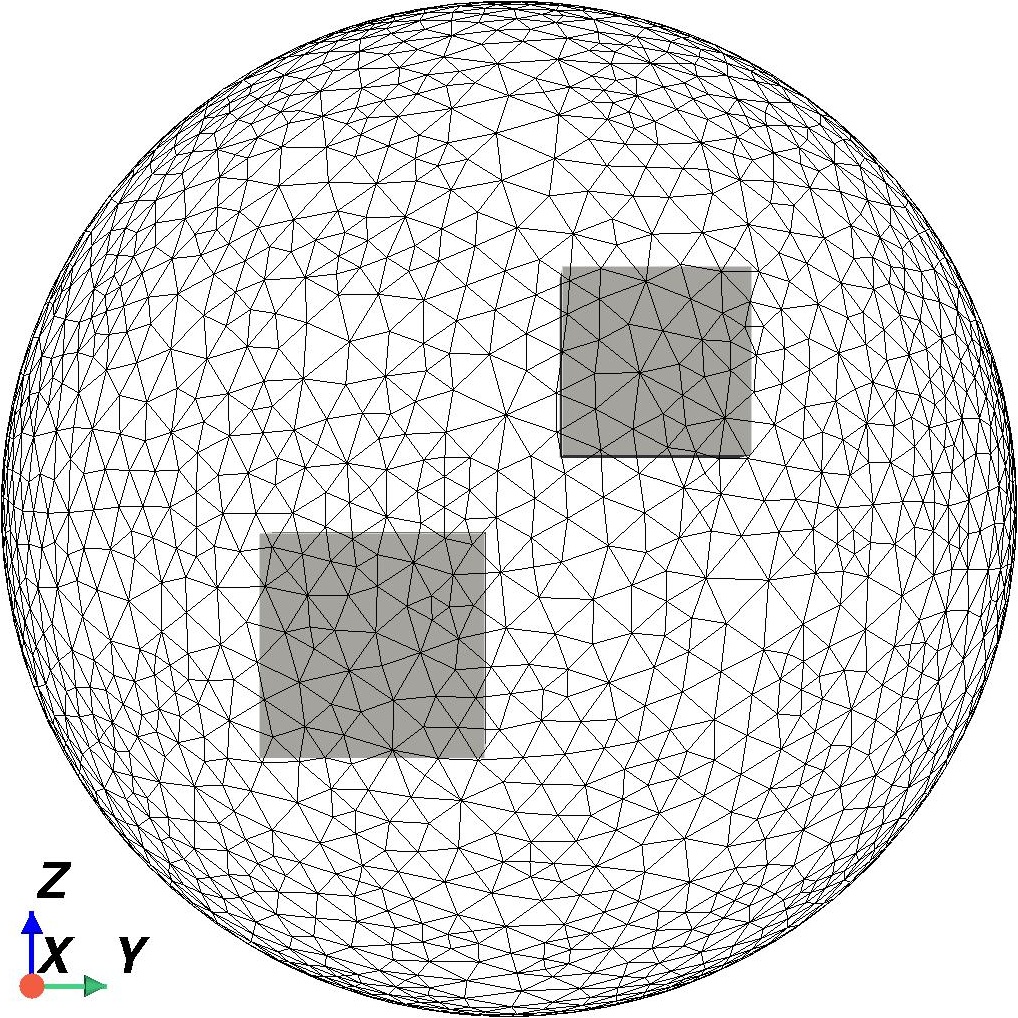}} \hfill  
\resizebox{0.235\textwidth}{!}{\includegraphics{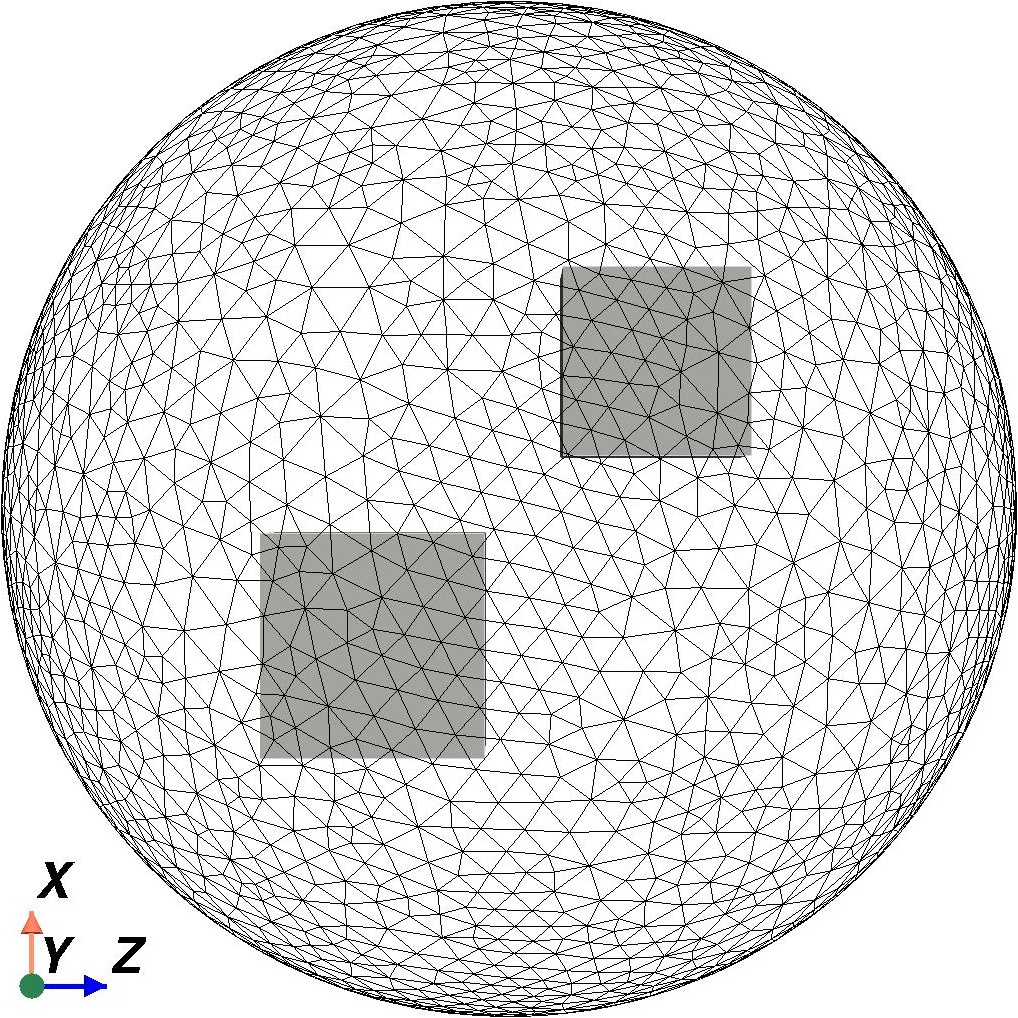}} \hfill  
\caption{Exact geometries}
\label{fig:two_cubes}
\end{figure}
%-----------------------------------------------------------------------------------------------------------------------------------------------
%-----------------------------------------------------------------------------------------------------------------------------------------------
\begin{figure}[htp!]
\centering
\resizebox{0.235\textwidth}{!}{\includegraphics{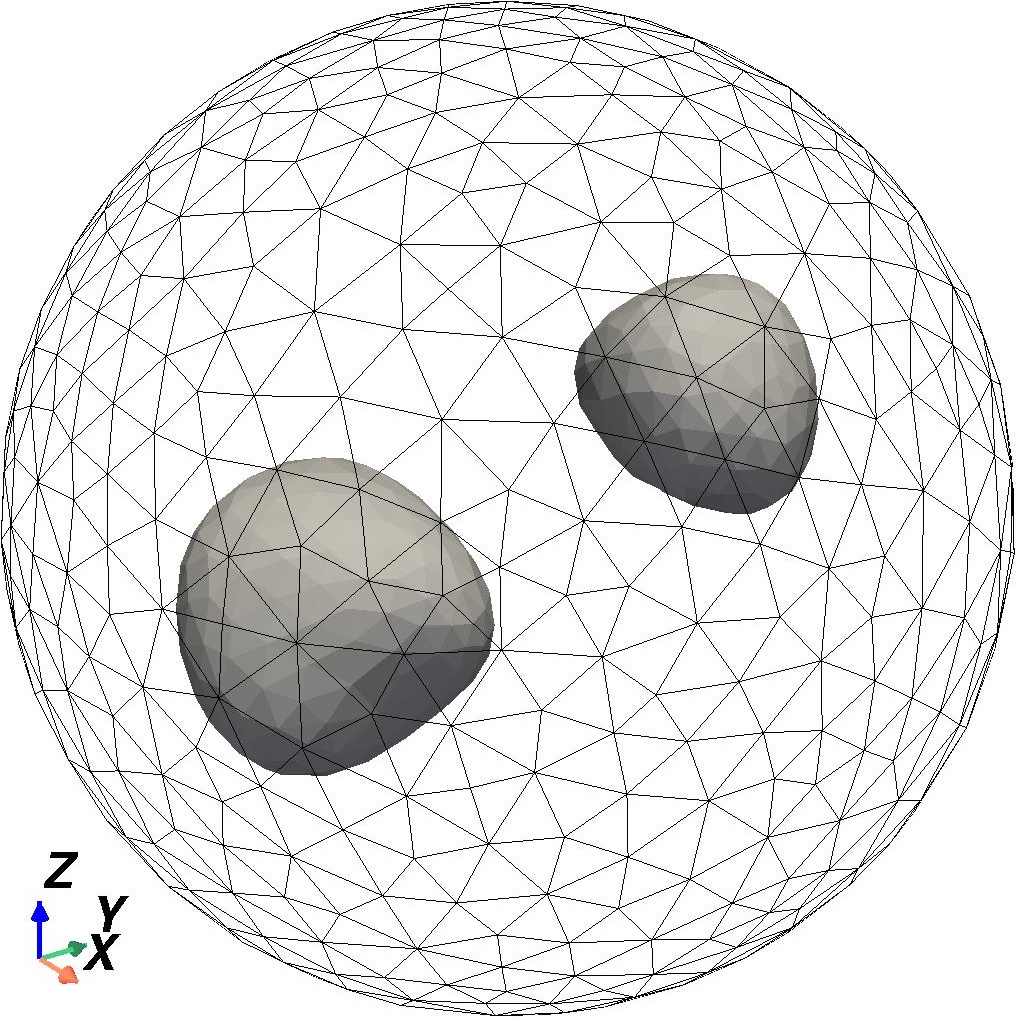}} \hfill  
\resizebox{0.235\textwidth}{!}{\includegraphics{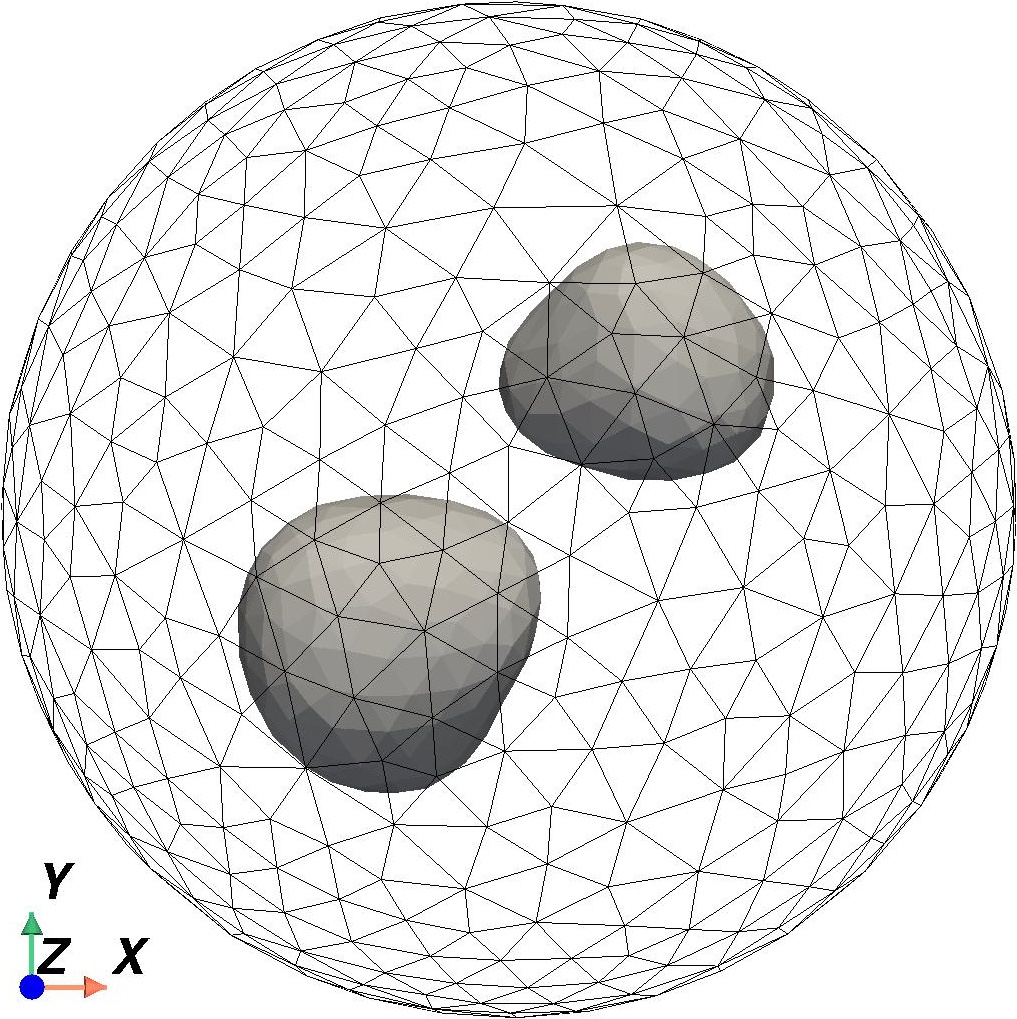}} \hfill  
\resizebox{0.235\textwidth}{!}{\includegraphics{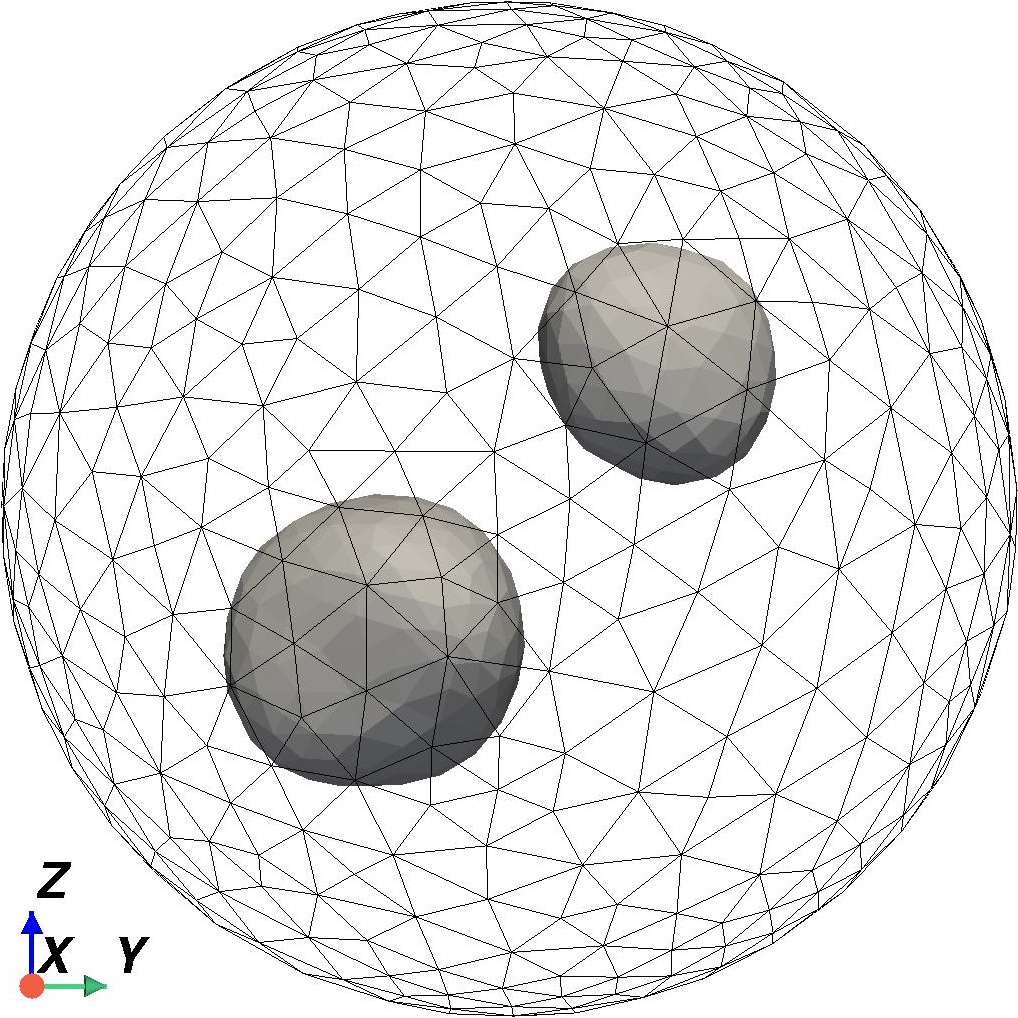}} \hfill  
\resizebox{0.235\textwidth}{!}{\includegraphics{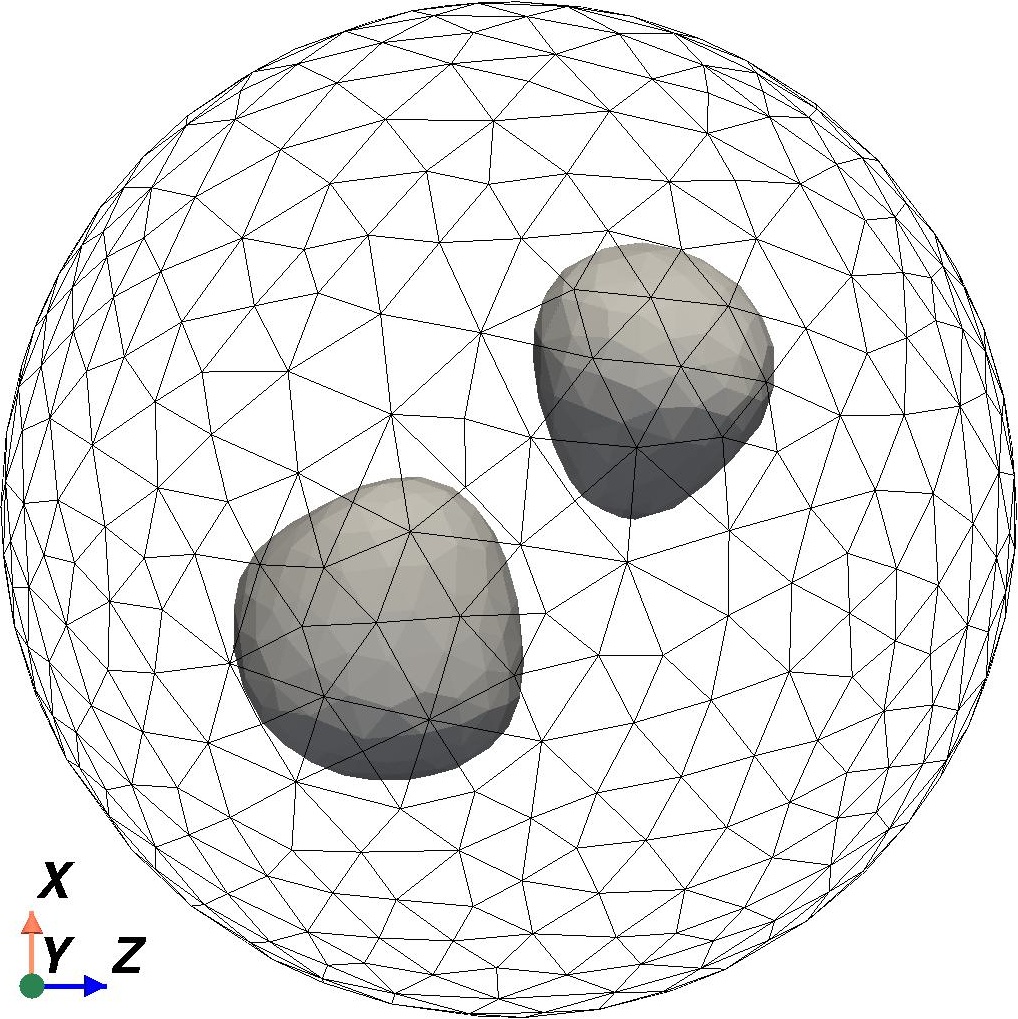}} \hfill  
\caption{Identified shapes with noisy data ($\delta = 30\%$)}
\label{fig:two_cubes_approximation}
\end{figure}
%-----------------------------------------------------------------------------------------------------------------------------------------------
%-----------------------------------------------------------------------------------------------------------------------------------------------
\begin{figure}[htp!]
\centering
\hfill
\resizebox{0.235\textwidth}{!}{\includegraphics{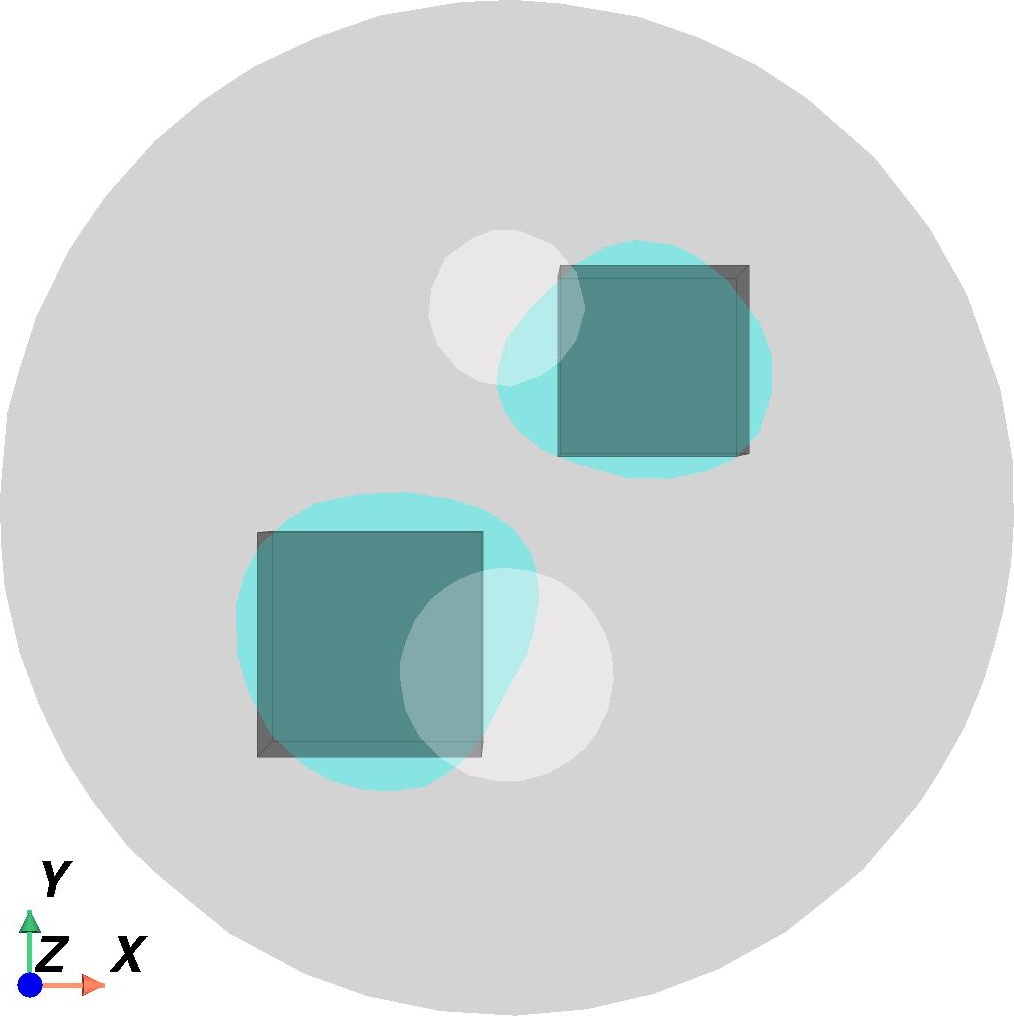}} \hfill  
\resizebox{0.235\textwidth}{!}{\includegraphics{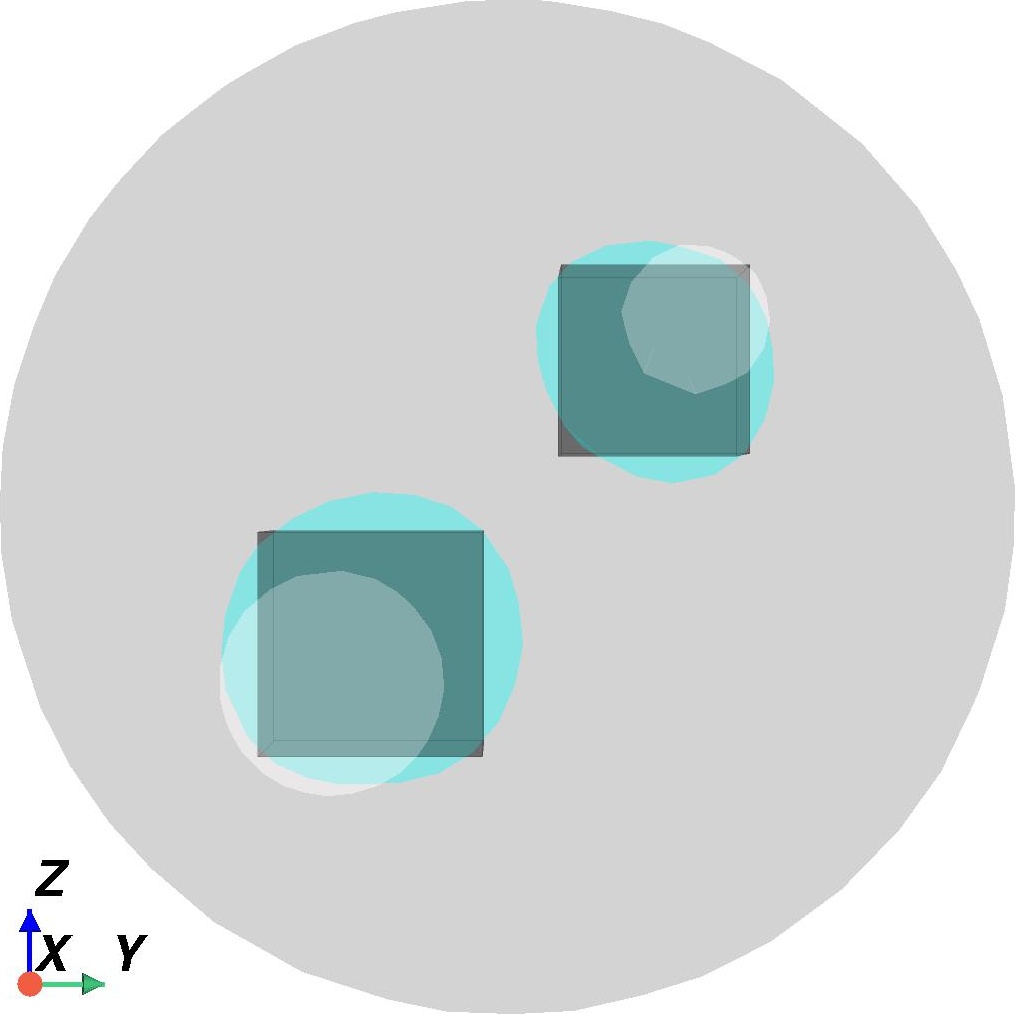}} \hfill  
\resizebox{0.235\textwidth}{!}{\includegraphics{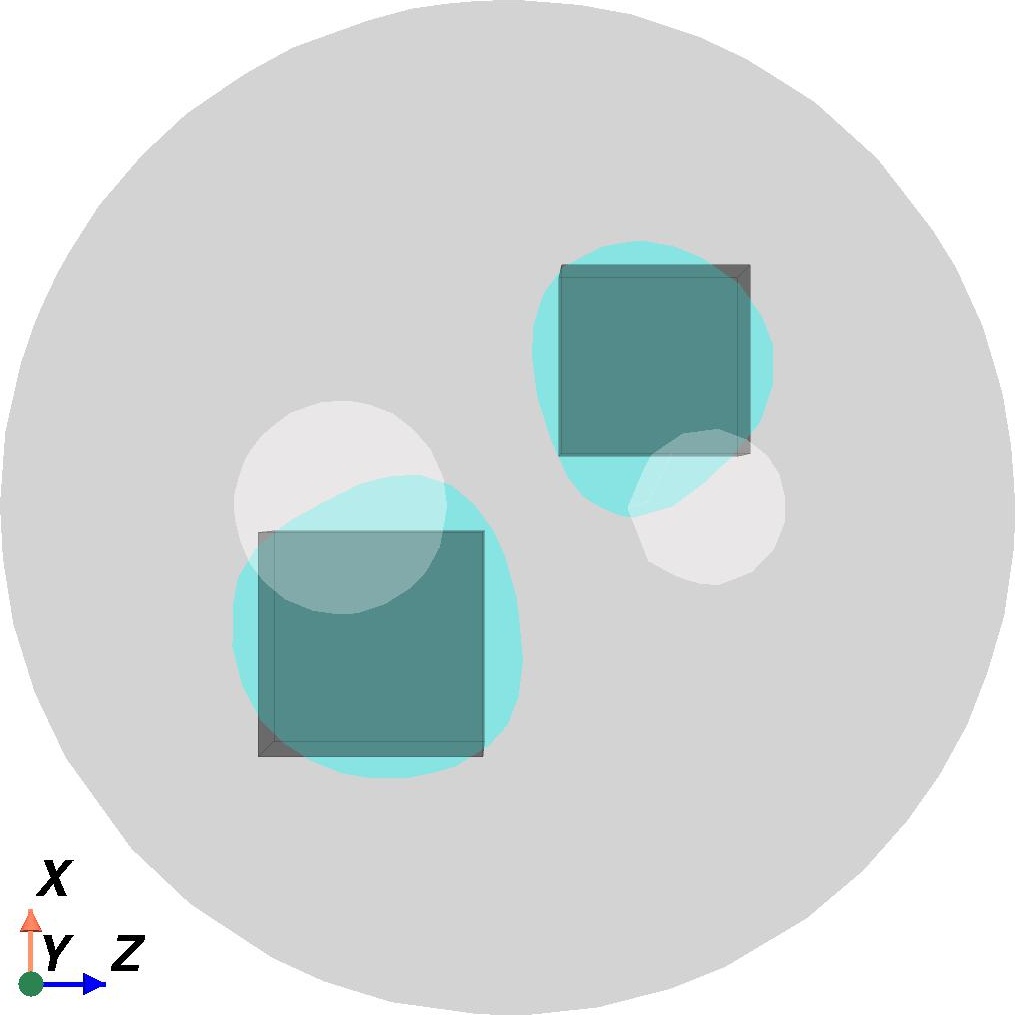}} \hfill  
\resizebox{0.235\textwidth}{!}{\includegraphics{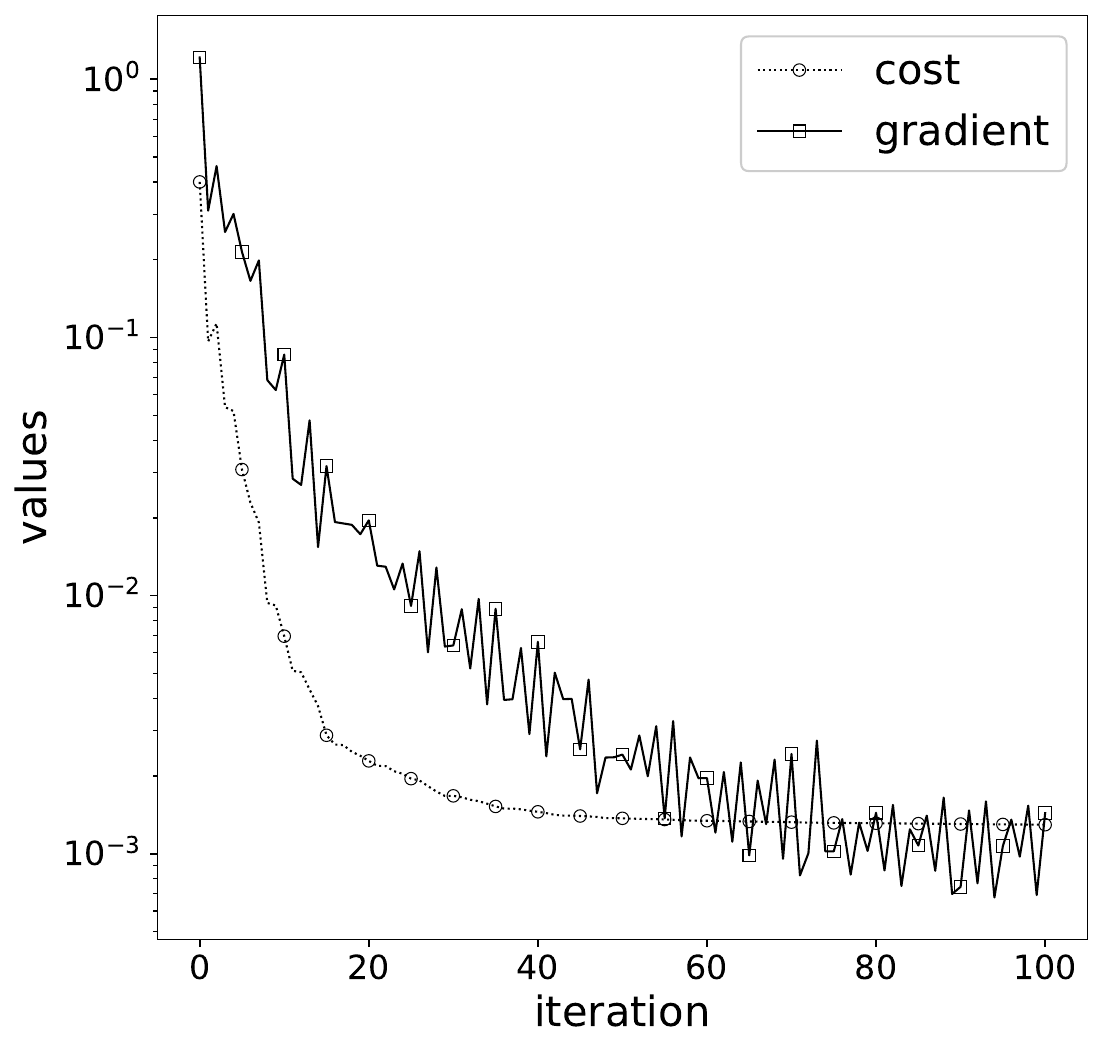}}
\caption{Cross-comparisons of exact (blackest color), initial (whitest color), and identified (cyan color) shapes corresponding to Figures \ref{fig:two_cubes} and \ref{fig:two_cubes_approximation}, along with histories of cost and gradient norms (rightmost plot)}
\label{fig:two_cubes_comparison}
\end{figure}

To conclude, we comment that in the first two examples, the algorithm took approximately $390$ seconds for $30$ iterations with remeshing every five steps. In the third example, it took around $10,000$ seconds to complete $100$ iterations with remeshing every two steps.

In general, our experiments reveal that smaller obstacles and parts distant from the measurement region present greater challenges for reconstruction. 
Nevertheless, the algorithm effectively detects the obstacle's location and provides a reasonable reconstruction of its geometry, even in the presence of noise in the measurements.

%-----------------------------------------------------------------------------------------------------------------------------------------------
%-----------------------------------------------------------------------------------------------------------------------------------------------
%
%----------------------------------- CONCLUSION ----------------------------------- 
%----------------------------------- CONCLUSION ----------------------------------- 
%----------------------------------- CONCLUSION ----------------------------------- 
\section{Conclusion}\label{sec:conclusion}
%%%%A shape-inverse obstacle problem in Stokes fluid flow is addressed using the coupled complex boundary method. 
%%%%The study reformulates the inverse problem into a shape optimization setup with the least-squares fitting of the imaginary part of the complex PDEs' solution in the flow domain, where the unknown inclusion is a nonparametric shape. 
%%%%The shape gradient of the associated cost functional is rigorously characterized using the rearrangement method, facilitating effective numerical resolution through a Sobolev gradient descent method in a finite element setting. 
%%%%Numerical results, conducted in two and three-dimensional cases,  reveal the effectiveness of the method in detecting non-trivial obstacles even under high-level noise-contaminated data.
The coupled complex boundary method is employed to address a shape-inverse obstacle problem in Stokes fluid flow. 
The study reformulates the inverse problem as a shape optimization setup, utilizing least-squares fitting of the imaginary part of the complex PDEs' solution in the flow domain. 
The unknown inclusion is treated as a nonparametric shape. 
The shape gradient of the associated cost functional is rigorously characterized using the rearrangement method, enabling effective numerical resolution through a Sobolev gradient descent method in a finite element setting. 
Numerical results in both two and three-dimensional cases demonstrate the method's effectiveness in detecting non-trivial obstacles, even under high-level noise-contaminated data.
\section*{Acknowledgements} 
The work of JFTR is supported by the JSPS Grant-in-Aid for Early-Career Scientists under Japan Grant Number JP23K13012.
HN is partially supported by JSPS Grants-in-Aid for Scientific Research under Grant Numbers JP20KK0058, JP21H04431, and JP20H01823.
JFTR and HN are also partially supported by the JST CREST Grant Number JPMJCR2014.
%%% The authors thank the three anonymous reviewers for their helpful and constructive comments that greatly contributed to improving the final version of the article.
%
%
%----------------------------------- REFERENCES ----------------------------------- 
%----------------------------------- REFERENCES ----------------------------------- 
%----------------------------------- REFERENCES ----------------------------------- 
\bibliographystyle{alpha} 
\bibliography{./main}   % name your BibTeX data base

%--------------------------  APPENDICES --------------------------
%--------------------------  APPENDICES --------------------------
%--------------------------  APPENDICES --------------------------
%--------------------------  APPENDICES --------------------------
%--------------------------  APPENDICES --------------------------
\appendix
\numberwithin{equation}{section}
\renewcommand{\theequation}{\thesection\arabic{equation}}
%
%
%
%%%%%%%%%%%%%%%%%%%%%%%%%%%%%%%%%%%%%%%%%%%%%%%%%%%%%%%%%%%%%%%%%%
%%%%%%%%%%%%%%%%%%%%%%%%%%%%%%%%%%%%%%%%%%%%%%%%%%%%%%%%%%%%%%%%%%
%----------------------------------------------------------------------------------------------------------------------------------
%	LEMMATA PROOFS
%----------------------------------------------------------------------------------------------------------------------------------
\section{Lemmata Proofs}\label{appendix:lemmata_proofs}
In this appendix, we present the proofs of the key lemmas employed in the previous section.
Let us first note that the following regularities hold (see, e.g., \cite{IKP2006,IKP2008}):
\begin{equation}\label{eq:regular_maps}
\left\{
\begin{aligned}
	[t \mapsto T_{t}] &\in \mathcal{C}^{1}(\mathcal{I},\mathcal{C}^{1,1}(\overline{D})^{d}),
		&& [t \mapsto T_{t}^{-1}] \in \mathcal{C}(\mathcal{I},\mathcal{C}^{1}(\overline{D})^{d}),\\
	[t \mapsto DT_{t}] &\in \mathcal{C}^{1}(\mathcal{I},\mathcal{C}^{0,1}(\overline{D})^{d\times d}),
		&& [t \mapsto (DT_{t})^{-\top}] \in \mathcal{C}^{1}(\mathcal{I},\mathcal{C}(\overline{D})^{d\times d}),\\	
	[t \mapsto \dett] &\in \mathcal{C}^{1}(\mathcal{I},\mathcal{C}(\overline{\Omega})), 
		&&\ [t \mapsto \dett] \in \mathcal{C}^{1}(\mathcal{I},\mathcal{C}^{0,1}(\overline{D})),\\
	[t \mapsto A_{t}] &\in \mathcal{C}^{1}(\mathcal{I},\mathcal{C}(\overline{\Omega})^{d \times d}),
		&&[t \mapsto A_{t}] \in \mathcal{C}(\mathcal{I},\mathcal{C}(\overline{D})^{d \times d}).
%%%	[t \mapsto B_{t}] &\in \mathcal{C}^{1}(\mathcal{I},\mathcal{C}(\Sigma)).
\end{aligned}
\right.
\end{equation}

%--------------------------  PROOF OF LEMMA \ref{lem:transported_problem} --------------------------
\subsection{Proof of Lemma \ref{lem:transported_problem}}
%%%%%\begin{proof}[{Proof of Lemma \ref{lem:transported_problem}}]
	The pair $({\uu_{t}}, {p_{t}}) \in {\HH_{\Gamma, \vect{0}}^{1}(\Omega_{t})}^{d} \times \LL^{2}(\Omega_{t})$ solves the system
	\[
        \left\{
        	\begin{aligned}
        		\intOt{{\alpha} \nabla {\uu_{t}} : \nabla {\cbphi}} + i \intSt{ {\uu_{t}} \cdot {\cbphi} } &- \intOt{p_{t} ( \nabla \cdot \cbphi) }\\
			&= \intSt{(\bgg_{t} + i \ff_{t} ) \cdot \cbphi}, \quad \forall {\bphi} \in {\HH_{\Gamma, \vect{0}}^{1}(\Omega_{t})}^{d},\\
		- \intOt{\bar{\lambda} ( \nabla \cdot \uu_{t}) } &= 0, \quad \forall \lambda \in {\LL^{2}(\Omega_{t})}.
         	\end{aligned}
        \right.
	\]
	By change of variables (cf. \cite[pp. 482--484]{DelfourZolesio2011}), the fact that B$_{t} = 1$ on $\Sigma$, for all $t>0$, $\uu^{t} = {\uu_{t}} \circ T_{t} = ( {\uu_{rt}} + i {\uu_{it}})  \circ T_{t}$, and together with the identities
	\[
		(\nabla \bphi_{t}) \circ T_{t} 
			=  DT_{t}^{-\top} \nabla \bphi^{t}
			\quad \text{and} \quad 
		(\nabla \cdot \bphi_{t}) \circ T_{t} 
			= (DT_{t})^{-1} : \nabla \bphi^{t},
	\]
	where $\bphi_{t} \in \HH^{1}(\Omega_{t})^{d}$ and $\bphi^{t} \in \HH^{1}(\Omega)^{d}$, we get
	\[
        \left\{
        	\begin{aligned}
	  \intO{{\alpha} A_{t} \nabla {\uu^{t}} : \nabla {\cbphi}} + i \intS{ {\uu^{t}} \cdot {\cbphi} } - \intO{ \dett p^{t} ( \Mt^{\top} : \nabla \cbphi) }	 
	  &= \intS{ (\bgg^{t} + i \ff^{t} ) \cdot \cbphi}, \quad \forall {\bphi}\in {\Vgamma},\\
	-\intO{ \dett \bar{\lambda} ( \Mt^{\top} : \nabla {\uu^{t}}) } &= 0, \quad \forall \lambda \in {Q}.
         	\end{aligned}
        \right.
	\]
        Using the notations in \eqref{eq:transformed_forms}, we recover \eqref{eq:transformed_ccbm_weak_form}.
	
	The rest of the proof is again a routine step. The well-posedness is achieved using standard arguments similar to the proof of Proposition \ref{prop:well_posedness_of_CCBM}, which follows the same lines of argumentation for the real case. 	
	More precisely, the argumentations make use of Lemma \ref{lem:boundedness_of_sesquiliner_and_linear_forms} which exploits the properties of $\dett$, $A_{t}$, and $B_{t}$ issued in \eqref{eq:bounds_At_and_Bt} and \eqref{eq:regular_maps}.
	Concerning uniqueness of solution, the proof (with a compatibility condition imposed) is also standard, so we omit it. 
%%%%%\end{proof}
%
%
%
%
%--------------------------  PROOF OF LEMMA \ref{lem:boundedness_of_the_transformed_state} --------------------------
\subsection{Proof of Lemma \ref{lem:boundedness_of_the_transformed_state}}
%%%\begin{proof}[{Proof of Lemma \ref{lem:boundedness_of_the_transformed_state}}]
	We first obtain a uniform bound for $\uu^{t}$ in $X$ for all $t \in \mathcal{I}$.
	To do this, we take $(\bphi,\lambda) = (\uu^{t},p^{t}) \in \Vgamma \times Q$ in \eqref{eq:transformed_ccbm_weak_form},
	utilize the properties of $A_{t}$ and $B_{t}$ given in \eqref{eq:bounds_At_and_Bt} and \eqref{eq:regular_maps}. 
	as well as the coercivity of $\aat$ on $X \times X$ (Lemma \ref{lem:boundedness_of_sesquiliner_and_linear_forms}).
	From \eqref{eq:transformed_ccbm_weak_form}, we have $\aat({\uu^{t}},{\uu^{t}}) = F^{t}({\uu^{t}})$.
	Using the relaxed $\Vgamma$-ellipticity property of $a^{t}$ (cf. Lemma \ref{lem:boundedness_of_sesquiliner_and_linear_forms}), we get the following sequence of inequalities
	\[
	\vertiii{\uu^{t}}_{X}^{2} 
		\lesssim \Re\left\{ \intO{{\alpha} A_{t} \abs{\nabla{\uu^{t}}}^{2}} + i \intS{\abs{\uu^{t}}^{2}} \right\}\\	
		\lesssim \norm{(\ff,\bgg)}_{1/2,\Sigma} \vertiii{\overline{\uu}^{t}}_{0,\Sigma}.
	\]
	Applying the trace theorem, and the fact that $\bgg^{t} = \bgg$ and $\ff^{t}=\ff$ on $\Sigma$, we get the estimate
	\begin{equation}\label{eq:bound_for_ut}
		\vertiii{\uu^{t}}_{X} \lesssim \norm{(\ff,\bgg)}_{1/2,\Sigma},
	\end{equation}
	which shows that $\uu^{t}$ is bounded in $X$ for all $t \in \mathcal{I}$.
	
	For the boundedness of $p^{t}$ in $Q$ for all $t \in \mathcal{I}$, we employ \eqref{eq:inf_sup_condition_for_transformed_problem}, which is equivalent to the following inequality condition
	\[
		\sup_{\substack{\bphi \in {\Vgamma}\\ \bphi \neq \vect{0}}} \frac{b^{t}(\bphi,{\lambda})}{\vertiii{\bphi}_{X} } \geqslant \beta_{1} \vertiii{{\lambda}}_{Q},\quad \forall {\lambda} \in Q.
	\]
	We let $\lambda = p^{t} \in Q$.
	Then, from \eqref{lem:transported_problem} and \eqref{eq:bounds_At_and_Bt} we obtain the following sequence of inequalities
	\begin{align*}
		\beta_{1} \vertiii{{p^{t}}}_{Q}
			&\leqslant \sup_{\substack{\bphi \in {\Vgamma}\\ \bphi \neq \vect{0}}} \frac{b^{t}(\bphi,{p^{t}})}{\vertiii{\bphi}_{X} }
			=\sup_{\substack{\bphi \in {\Vgamma}\\ \bphi \neq \vect{0}}} \vertiii{\bphi}_{X}^{-1}
				\Bigg\{ F^{t}(\bphi) - \aat({\uu^{t}},{\bphi}) \Bigg\}\\
			&\leqslant 
			\sup_{\substack{\bphi \in {\Vgamma}\\ \bphi \neq \vect{0}}} \vertiii{\bphi}_{X}^{-1}
				\Bigg\{ 
				 	\intS{(\bgg + i \ff ) \cdot \cbphi}
					-  \intO{{\alpha} A_{t} \nabla {\uu^{t}} : \nabla {\cbphi}}
					- \intS{{\uu^{t}} \cdot {\cbphi} } 
			 	\Bigg\}\\
			&\leqslant 
			c \sup_{\substack{\bphi \in {\Vgamma}\\ \bphi \neq \vect{0}}} \vertiii{\bphi}_{X}^{-1}
				\Bigg\{  
					\norm{(\ff,\bgg)}_{1/2,\Sigma} \vertiii{\cbphi}_{0,\Omega} 
					+  \vertiii{\nabla {\uu^{t}}}_{0,\Omega} \vertiii{\nabla {\cbphi}}_{0,\Omega}
					+ \vertiii{{\uu^{t}}}_{0,\Sigma} \vertiii{{\cbphi}}_{0,\Sigma} 
			 	\Bigg\}\\
			&\leqslant 
			c \left( \norm{(\ff,\bgg)}_{1/2,\Sigma} + \vertiii{\uu^{t}}_{X}  \right).								
	\end{align*}
	Therefore, using the estimate for $\vertiii{\uu^{t}}_{X}$ given in \eqref{eq:bound_for_ut}, we also have the estimate $\vertiii{{p^{t}}}_{Q} \lesssim \norm{(\ff,\bgg)}_{1/2,\Sigma}$, completing the proof of the lemma.
%%%\end{proof}
%
%
%
%
%
%--------------------------  PROOF OF LEMMA \ref{lem:holder_continuity} --------------------------
\subsection{Proof of Lemma \ref{lem:holder_continuity}}
%%%%\begin{proof}[{Proof of Lemma \ref{lem:holder_continuity}}]
	%
	The proof of the lemma mainly consists of two key steps: the first step is to show that $\lim_{t \to 0} \uu^{t} = \uu$ in $X$ and $\lim_{t \to 0} p^{t} = p$ in $Q$, and the second one is to prove that $\lim_{t\to0^{+}} \frac{1}{\sqrt{t}} ( \vertiii{\uu^{t} - \uu}_{X} + \vertiii{p^{t} - p}_{Q} ) = 0$.

	\underline{\textnormal{Step 1}.} We consider the difference between the variational equations \eqref{eq:transformed_ccbm_weak_form} and \eqref{eq:ccbm_weak_form}, and define $\yt := \uu^{t} - \uu$ and $r^{t} := p^{t} - p$.
	By making $\varepsilon > 0$ smaller if necessary, it can be shown that the following expansions hold for all $t \in \mathcal{I}$,
	\begin{equation*}\label{eq:approximations}
	\left\{
	\begin{aligned}
		\dett &= 1 + t \operatorname{div} \VV + t^{2} \tilde{\rho}(t,\VV), \quad \text{where} \ \tilde{\rho} \in \mathcal{C}(\mathbb{R},\mathcal{C}^{0,1}(D)),\\
		\Mt^{\top} &= (DT_{t})^{-1} = (id + t D\VV)^{-1} = id - t D\VV + O{(t^{2})} id, \quad \text{where}\ 0 \leqslant {{t}} \leqslant \varepsilon < |\lambda_{max}|^{-1},
	\end{aligned}
	\right.
	\end{equation*}
	where $\lambda_{max}$ is the maximum singular value of $D\VV$.
	Moreover, we denote
	\[
		\rho(t):=t^{2} \tilde{\rho}(t,\VV),
		\quad R(t) := O{(t^{2})} id,
		\quad \rho_{1}(t):=t \tilde{\rho}(t,\VV),
		\quad\text{and}\quad R_{1}(t) := O{(t)} id.\footnote{In some occasions, the remainder $\rho(t)$ and $R(t)$ may have a different structure for their exact expressions. Nevertheless, these expressions are always of order $O(t^{2})$. The same is true for $\rho_{1}(t)$ and $R_{1}(t)$. We abuse the use of these notations since the exact expressions are not actually of interest in our arguments.}
	\]
	Let us consider the variational equation
	\[
		b^{t}(\uu^{t},\lambda) - b(\uu,\lambda) 
		= -\intO{ \dett \bar{\lambda} ( \Mt^{\top} : \nabla {\uu^{t}}) } - \left( - \intO{\overline{\lambda} \nabla \cdot \uu } \right)
		= 0, \quad \forall \lambda \in Q.
	\]
	By applying the expansion for $\Mt^{\top}$ and taking $\lambda = r^{t} = p^{t} - p \in Q$ in the above equation, it can be shown that
	\begin{equation}\label{eq:identity_for_rtyt} 
	\begin{aligned}
	b(\yt,\rt) 
		= \intO{r^{t} \nabla \cdot \overline{\vect{y}}^{t} }  
	 	&= - \intO{ (t \nabla \cdot \VV + \rho(t)) r^{t} ( \Mt^{\top} : \nabla \overline{\uu}^{t})}\\
		&\qquad	- \intO{ r^{t} [ (-t D\VV + R(t) ): \nabla \overline{\uu}^{t} ] }.	
	\end{aligned}
	\end{equation}
	On the other hand, we also have the equation
	\begin{equation}\label{eq:difference_equation} 
			\aaa(\yt, \bphi) + b(\bphi,\rt) = \Phi^{t}(\bphi),\qquad \forall {\bphi} \in \Vgamma,
	\end{equation}
	where $\Phi^{t}(\bphi)$ is given by \eqref{eq:big_Phi_sup_t}, and $a: \Vgamma \times \Vgamma \to \mathbb{R}$ and $b: \Vgamma \times Q \to \mathbb{R}$ are given in \eqref{eq:forms_for_the_state_problem}.
	In above, we have used the fact that $B_{t} = 1$ on $\Sigma$, for all $t>0$, and that $\VV = \vect{0}$ on $\Sigma$.
	Choosing $\bphi = \yt \in \Vgamma$ in \eqref{eq:difference_equation} and utilizing identity \eqref{eq:identity_for_rtyt}, it follows that
	\begin{align*}
	c_{a}\vertiii{\yt}_{X}^{2}
%%%		& \ \leqslant \Re\{\aaa(\yt,\yt)\}\\
%%%		& \ \leqslant \abs{\aaa(\yt,\yt)} \\
%%%		& \ = \left| \Phi^{t}(\yt) - b(\yt,p^{t} - p) \right| \\
		& \ \leqslant \left| \Phi^{t}(\yt) - b(\yt,p^{t} - p) \right| \\
		%
		%	
		%%%%%%%%%%%%%%%%%%%%%%%%%%%%%%%%%%%%%%%%%%%%%%%%%%%%%%%%%%%%%%%%%	
		&\ \leqslant {{\alpha}} \left| {\mathfrak{a}_{t}} \right|_{\infty} \vertiii{\nabla {\uu^{t}}}_{0,\Omega} \vertiii{\nabla {\cyt}}_{0,\Omega}
		+ \left( \left| {\mathfrak{i}}_{t} \right|_{\infty} \abs{ \Mt^{\top} }_{\infty} +\left| {\mathfrak{m}_{t}} \right|_{\infty} \right) \vertiii{p^{t}}_{0,\Omega} \vertiii{ \nabla {\cyt} }_{0,\Omega} \\		 
		&\ \quad + {{t}} \left[ \left( \abs{\nabla \cdot \VV}_{\infty} + \abs{\rho_{1}(t)}_{\infty} \right) \abs{ \Mt^{\top} }_{\infty}
			+ \abs{D\VV}_{\infty} + \abs{R_{1}{(t)}}_{\infty} \right]
				\vertiii{r^{t}}_{0,\Omega} \vertiii{ \nabla \overline{\uu}^{t} }_{0,\Omega}.			
	\end{align*}
	Therefore, we have the estimate
	\begin{equation}\label{eq:first_estimate}
		c_{a}\vertiii{\yt}_{X}^{2}
		\leqslant {m}_{t} \vertiii{ \yt }_{X} + t {\Xi}^{t} \vertiii{r^{t}}_{Q},	
	\end{equation}
	where
	\begin{equation}\label{eq:coefficients}
	\left\{
	\begin{aligned} 
		{m}_{t} &:= {{\alpha}} \left| {\mathfrak{a}_{t}} \right|_{\infty} \vertiii {\uu^{t}}_{X}
				+ \left( \left| {I}_{t} \right|_{\infty} \abs{ \Mt^{\top} }_{\infty} +\left| {\mathfrak{m}_{t}} \right|_{\infty} \right) \vertiii{p^{t}}_{Q},\\
		{\Xi}^{t} &:=  \left[ \left( \abs{\nabla \cdot \VV}_{\infty} + \abs{\rho_{1}(t)}_{\infty} \right) \abs{ \Mt^{\top} }_{\infty} + \abs{D\VV}_{\infty} + \abs{R_{1}{(t)}}_{\infty} \right] \vertiii{ {\uu}^{t} }_{X}.	
	\end{aligned}
	\right.
	\end{equation}
	Note that the quantities ${m}_{t}$ and ${\Xi}^{t}$ are finite for all $t \in \mathcal{I}$ by Lemma \ref{lem:boundedness_of_the_transformed_state}.

	Now, to proceed further, we take $\lambda = r^{t} = p^{t}-p \in Q$ in the inf-sup condition \eqref{eq:inf_sup_nts} and consider equation \eqref{eq:difference_equation} to obtain
	\begin{equation}\label{eq:estimate_for_pt_final}
	\begin{aligned}
		\vertiii{{r^{t}}}_{Q} 
		 \leqslant \beta_{0}^{-1} \sup_{\substack{\bphi \in {\Vgamma}\\ \bphi \neq \vect{0}}} \frac{b(\bphi,{r^{t}})}{\vertiii{\bphi}_{X} }
		&=  \beta_{0}^{-1} \sup_{\substack{\bphi \in {\Vgamma}\\ \bphi \neq \vect{0}}} \vertiii{\bphi}_{X}^{-1} 
			\Big\{ \Phi^{t}(\bphi) - \aaa(\yt, \bphi) \Big\}\\
		&\leqslant  \beta_{0}^{-1} \left( {m}_{t} + \max\{ {{\mu}}, 1\} \vertiii{ \yt }_{X} \right).
	\end{aligned}
	\end{equation}
	Going back to \eqref{eq:first_estimate} and utilizing the above estimate, we get 
	\begin{equation}\label{eq:second_estimate_eliminating_pt}
	\begin{aligned}
		c_{a}\vertiii{\yt}_{X}^{2}
		&\leqslant t {\Xi}^{t} \beta_{0}^{-1} {m}_{t}  + \left(  {m}_{t}  + t {\Xi}^{t} \beta_{0}^{-1} \max\{ {{\mu}}, 1\} \right) \vertiii{ \yt }_{X}.
	\end{aligned}
	\end{equation}	
	We apply Peter-Paul inequality to the second summand in above inequaltiy to obtain the following estimate
	\begin{align*}
		\left(  {m}_{t}  + t {\Xi}^{t} \beta_{0}^{-1} \max\{ {{\mu}}, 1\} \right) \vertiii{ \yt }_{X}
			& \leqslant \frac{\left(  {m}_{t}  + t {\Xi}^{t} \beta_{0}^{-1} \max\{ {{\mu}}, 1\} \right)^{2}}{2\varepsilon_{1}} + \frac{\varepsilon_{1}}{2} \vertiii{ \yt }_{X}^{2}.
	\end{align*}
	for some constant $\varepsilon_{1} > 0$.
	We choose and fixed $\varepsilon_{1}$ such that $\bar{c}:=c(c_{a},\varepsilon_{1}) := 2c_{a} - \varepsilon_{1} > 0$ so that, from our first estimate \eqref{eq:second_estimate_eliminating_pt}, we get
	\begin{equation}\label{eq:final_estimate_for_yt_bounded_above}
	\vertiii{\yt}_{X}
		\leqslant \bar{c}^{-\frac{1}{2}} \left(  t {\Xi}^{t} \beta_{0}^{-1} {m}_{t} + \frac{\left(  {m}_{t}  + t {\Xi}^{t} \beta_{0}^{-1} \max\{ {{\mu}}, 1\} \right)^{2}}{2\varepsilon_{1}} \right)^{1/2}.	
	\end{equation}
	Because $\dett |\Mt\nn|^{-1} = (1) (|id \nn|^{-1}) = 1$ at $t=0$, then, in view of \eqref{eq:coefficients} together with Lemma \ref{lem:boundedness_of_the_transformed_state}, \eqref{eq:convergence_of_vector_valued_functions_1}, and \eqref{eq:convergence_of_vector_valued_functions_3}, we see that ${m}_{t} \to 0$ as $t \to 0$.
	Moreover, since ${\Xi}^{t}$ is uniformly bounded for all $t \in \mathcal{I}$ by Lemma \ref{lem:boundedness_of_the_transformed_state}, we deduce -- by Lebesgue's dominated convergence theorem -- that
	\begin{equation}\label{eq:limit_of_ut}
	\lim_{t \to 0} \vertiii{\yt}_{X} = 0
		\qquad \Longleftrightarrow \qquad
		\lim_{t \to 0} \uu^{t} = \uu \quad \text{in $X$}.
	\end{equation}

	Similarly, based from the above discussion, we know that the quantities on the right side of the inequality \eqref{eq:estimate_for_pt_final} vanish as $t \to 0$.
	Therefore, we also have the limit
	\begin{equation}\label{eq:limit_of_pt}
	\lim_{t \to 0} \vertiii{r^{t}}_{Q} = 0
		\qquad \Longleftrightarrow \qquad	
		\lim_{t \to 0} p^{t} = p \quad \text{in $Q$}.
	\end{equation}

	\underline{\textnormal{Step 2}.}	 For the second part of the proof, we start by noting that, for sufficiently small $t > 0$, $\frac{1}{t}\yt \in \Vgamma$ and $\frac{1}{t}r^{t} \in Q$.
	In addition, we recall that the derivatives of $\dett$ and $\Mt$ with respect to $t$ exists in $L^{\infty}(\Omega)$ and $L^{\infty}(\Omega)^{d\times d}$, respectively.
	Now, to finish the proof, we go back to inequality \eqref{eq:final_estimate_for_yt_bounded_above} to obtain, after dividing by $t > 0$, the following estimate
	\begin{align*}
		\frac{1}{t}\vertiii{\yt}_{X}^{2}
		\leqslant \bar{c}^{-1} \left( {\Xi}^{t} \beta_{0}^{-1} {m}_{t} + \frac{1}{2\varepsilon_{1}} \left(  \frac{1}{\sqrt{t}}{m}_{t}  + \sqrt{t} {\Xi}^{t} \beta_{0}^{-1} \max\{ {{\mu}}, 1\} \right)^{2} \right).	
	\end{align*}
	Observe that we have
	\[
	\frac{1}{\sqrt{t}}{m}_{t} 
		= \sqrt{t} {{\mu}} \left| \frac{\mathfrak{a}_{t}}{t} \right|_{\infty} \vertiii {\uu^{t}}_{X}\
			+ \sqrt{t}\left( \left| \frac{{\mathfrak{i}}_{t}}{t} \right|_{\infty} \abs{ \Mt^{\top} }_{\infty} + \left| \frac{\mathfrak{m}_{t}}{t} \right|_{\infty} \right) \vertiii{p^{t}}_{Q}.
	\]
	Thus, using \eqref{eq:regular_maps}, \eqref{eq:limits_of_maps}, \eqref{eq:convergence_of_vector_valued_functions_3}, Lemma \ref{lem:boundedness_of_the_transformed_state}, \eqref{eq:limit_of_ut}, and \eqref{eq:limit_of_pt}, we deduce that the following limit holds
	\begin{equation}\label{eq:limit_ratio_yt_over_t} 
		\lim_{t \to 0^{+} }\frac{1}{\sqrt{t}}\vertiii{\uu^{t} - \uu}_{X} = 0.
	\end{equation}

	Similarly, from estimate \eqref{eq:estimate_for_pt_final}, we know that (after dividing by $\sqrt{t} > 0$) the following inequality holds
	\begin{align*}
		\frac{1}{\sqrt{t}}\vertiii{{r^{t}}}_{Q} \leqslant \beta_{0}^{-1} \left[ \frac{1}{\sqrt{t}} {m}_{t} + \max\{ {{\mu}}, 1\} \left( \frac{1}{\sqrt{t}} \vertiii{ \yt }_{X} \right) \right].
	\end{align*}
	Again, applying \eqref{eq:regular_maps}, \eqref{eq:limits_of_maps}, \eqref{eq:convergence_of_vector_valued_functions_3}, and Lemma \ref{lem:boundedness_of_the_transformed_state}, but now combined with \eqref{eq:limit_ratio_yt_over_t}, we infer that
	\begin{equation}\label{eq:limit_ratio_rt_over_t} 
		\lim_{t \to 0^{+} }\frac{1}{\sqrt{t}}\vertiii{p^{t} - p}_{Q} = 0,
	\end{equation}	
	proving the lemma.
%%%%\end{proof} 
%
%
%

\Addresses

\end{document}